\numberwithin{equation}{section}
\newtheorem{theorem}{Theorem}[section]
\newtheorem{lemma}[theorem]{Lemma}
\newtheorem{proposition}[theorem]{Proposition}
\newtheorem{definition}[theorem]{Definition}
\newcommand\R{\mathbb{R}}
\newcommand\N{\mathbb{N}}
\newcommand\be{\begin{equation}}
\newcommand\ee{\end{equation}}
\newcommand\baa{\begin{array}}
\newcommand\eaa{\end{array}}
\newcommand\bea{\begin{eqnarray}}
\newcommand\eea{\end{eqnarray}}
\newcommand\beaa{\begin{eqnarray*}}
\newcommand\eeaa{\end{eqnarray*}}
\newcommand\bss{\begin{cases}}
\newcommand\ess{\end{cases}}
\newcommand\bR{\mathbb{R}}
\newcommand\bN{\mathcal{N}}
\newcommand\mU{{\mathcal{U}}}
\newcommand\mL{{\mathcal{L}}}
\newcommand\bNU{{\mathcal{N}_1[\phi_1]}}
\newcommand\bNV{{\mathcal{N}_2[\phi_2]}}
\newcommand\bNu{{\mathcal{N}_1[u]}}
\newcommand\bNv{{\mathcal{N}_2[v]}}
\newcommand\bNW{{\mathcal{N}_3[\phi_3]}}
\newcommand\bNw{{\mathcal{N}_3[w]}}
\newcommand\uphi{{\underline{\phi}}}
\newcommand\ophi{{\overline{\phi}}}
\newcommand\ld{\lambda}
\newcommand\ep{\varepsilon}
\newcommand\phitri{(\phi_1,\phi_2,\phi_3)}
\newcommand\estar{e^{-\ld^{**}_2z}}
\begin{document}

\title[nonlocal dispersal and climate change]
{Forced waves for a three-species predator-prey system with nonlocal dispersal in a shifting environment}

\author[J.-S. Guo]{Jong-Shenq Guo}
\address{Department of Mathematics, Tamkang University, Tamsui, New Taipei City 251301, Taiwan}
\email{jsguo@mail.tku.edu.tw}

\author[F. Hamel]{Fran{\c{c}}ois Hamel}
\address{Aix Marseille Univ, CNRS, I2M, Marseille, France}
\email{francois.hamel@univ-amu.fr}

\author[C.-C. Wu]{Chin-Chin Wu}
\address{Department of Applied Mathematics, National Chung Hsing University, Taichung 402, Taiwan }
\email{chin@email.nchu.edu.tw}

\thanks{Date: \today. Corresponding author: C.-C. Wu}

\thanks{This work was supported in part by the Ministry of Science and Technology of Taiwan under the grants 108-2115-M-032-006-MY3 (JSG) and 110-2115-M-005-001-MY2 (CCW), and by the CNRS-NCTS joint International Research Network ReaDiNet. This work has also received funding from Excellence Initiative of Aix-Marseille Universit\'e~-~A*MIDEX, a French ``Investissements d'Avenir'' programme, and from the ANR RESISTE (ANR-18-CE45-0019) project.}


\thanks{{\em Key words and phrases.} Predator-prey model, nonlocal dispersal, climate change, forced wave.}

\begin{abstract}
We consider a three-species predator-prey system involving two competing predators and one prey. The species diffuse with nonlocal dispersal kernels with possibly non-compact support, 
and they interact in a heterogeneous environment moving with a positive forced speed such that 
the environment is favorable to the prey in the absence of predators far ahead of the shifting boundary and it is unfavorable far behind. 
Such systems arise in the modeling of population dynamics under the effect of a shifting environment, such as climate change. 
We show on the one hand the existence of waves connecting the trivial state to the unique constant positive co-existence state for any value of the forced speed. 
On the other hand, we show the existence of critical positive speeds for the existence of waves connecting the trivial state to the states corresponding to the absence of one or two predators.
\end{abstract}

\maketitle


\section{Introduction}
\setcounter{equation}{0}

Nonlocal dispersal models arise in population dynamics to describe long-distance dispersal of individuals \cite{k03,klv96,Lutscher}. Recently, the effect of environmental heterogeneity has drawn a lot of attention in biological applications. In this paper, we consider the following predator-prey system with two weak competing predators and one prey:
\be\label{pp}\left\{\baa{rcll}
u_t(x,t) & = & d_1\bNu(x,t) & \!\!\!\!+\,r_1u(x,t)\,[-1-u(x,t)-kv(x,t)+aw(x,t)],\vspace{3pt}\\
v_t(x,t) & = & d_2\bNv(x,t) & \!\!\!\!+\,r_2v(x,t)\,[-1-hu(x,t)-v(x,t)+aw(x,t)],\vspace{3pt}\\
w_t(x,t) & = & d_3\bNw(x,t) & \!\!\!\!+\,r_3w(x,t)\,[\alpha(x-st)-bu(x,t)-bv(x,t)-w(x,t)],\eaa\right.
\ee
in which the nonnegative quantities $u=u(x,t)$ and $v=v(x,t)$ are the densities of two predators and $w=w(x,t)$ is the density of the single prey, for $x,t\in\R$.

Throughout the paper, the parameters $d_1,d_2,d_3,r_1,r_2,r_3,a,b,h,k$ are all positive, and $a,b,h,k$ are such that
\be\label{c1}
a>1,\quad 0<h,k<1,\quad 0<b<\frac{1}{2(a-1)}.
\ee

Meanwhile, $\bNu$, $\bNv$ and $\bNw$ formulate the spatial nonlocal dispersal and are defined by
$$\left\{\baa{lclcl}
\bNu(x,t) & := & (J_1\ast u)(x,t)-u(x,t) & = & \displaystyle\int_{\mathbb{R}}J_{1}(y)u(x-y,t)dy-u(x,t),\vspace{3pt}\\
\bNv(x,t) & := & (J_2\ast v)(x,t)-v(x,t)& = & \displaystyle\int_{\mathbb{R}}J_{2}(y)v(x-y,t)dy-v(x,t),\vspace{3pt}\\
\bNw(x,t) & := & (J_3\ast w)(x,t)-w(x,t) & = & \displaystyle\int_{\mathbb{R}}J_{3}(y)w(x-y,t)dy-w(x,t),\eaa\right.$$
in which, throughout the paper, the functions $J_{i}:\R\to\R$ ($i=1,2,3$) are probability kernel functions satisfying
\begin{enumerate}
\item[(J1)] $J_i$ is nonnegative, measurable with respect to the Lebesgue measure, $\int_{\mathbb{R}}J_i(y)dy=1$, and there are $\eta_i>0$ and $y^\pm_i\in\R$ such that $y^-_i<0<y^+_i$ and $J_i>0$ in $(y^-_i-\eta_i,y^-_i+\eta_i)$ and in $(y^+_i-\eta_i,y^+_i+\eta_i)$;
\item[(J2)] $\int_{\R}J_i(y)y\,dy=0$;
\item[(J3)] there exist $-\infty\le\tilde\lambda_i<0<\hat\lambda_i\le+\infty$ such that $I_i(\ld)<+\infty$ for all $\ld\in(\tilde{\lambda}_i,\hat{\ld}_i)$, and $I_i(\ld)\to+\infty$ as $\ld\downarrow\tilde{\lambda}_i$ and $\ld\uparrow\hat{\ld}_i$, where
$$I_i(\ld):=\int_\bR J_i(y)e^{\ld y}dy.$$
\end{enumerate}
Conditions (J1)-(J3) imply that each function $I_i$ is strictly convex and of class $C^\infty$ in $(\tilde\lambda_i,\hat\lambda_i)$, that $I_i(0)=1$, $I_i'(0)=0$, and $I_i(\lambda)=+\infty$ for all $\lambda\in(-\infty,\tilde{\lambda}_i]$ if $-\infty<\tilde\lambda_i$ (resp. for all $\lambda\in[\hat\lambda_i,+\infty)$ if $\hat\lambda_i<+\infty$). If $J_i$ has a compact support, then $\tilde{\lambda}_i=-\infty$ and $\hat\lambda_i=+\infty$. Conditions~(J1) and~(J3) imply that each function $y\mapsto J_i(y)y$ is in $L^1(\R)$ and, if $J_i$ is even, they necessarily yield~(J2), and $\tilde{\lambda}_i=-\hat\lambda_i$ in~(J3). We point out that the conditions (J1)-(J3) are satisfied in particular if $J_i$ is nonnegative, continuous, even, has a unit integral over $\R$ and if $I_i(\lambda)<+\infty$ for some $\lambda>0$. But the conditions (J1)-(J3) cover more general dispersal kernels $J_i$, in particular the kernels $J_i$ can be non-symmetric.

The function $\alpha$ in~\eqref{pp}, describing the heterogeneity, is assumed to be continuous in $\bR$ and the given positive constant $s$ denotes the environmental shifting speed. Throughout this paper we also impose the following conditions on $\alpha$:
\begin{enumerate}
\item[($\alpha$1)] $\alpha$ has limits $\alpha(\pm\infty)$ at $\pm\infty$, such that $-\infty<\alpha(-\infty)<0<\alpha(+\infty)<+\infty$, and $\alpha(z)\le\alpha(+\infty)$ for all $z\in\R$;
\item[($\alpha$2)] there exist $C>0$ and $\rho>0$ such that $\alpha(+\infty)-\alpha(z)\le Ce^{-\rho z}$ for all large $z$.
\end{enumerate}
Condition ($\alpha$1) means that the environment is favourable to the prey ahead of the shifting boundary $x=st$, then gradually deteriorates until it becomes hostile to the species far behind this shifting boundary. We point out that the function $\alpha$ is not assumed to be monotone.

Biologically, parameters $(d_1,d_2,d_3)$, $(h,k)$, $a$ and $b$ represent the diffusion coefficients, competition rates, conversion rate and predation rate, respectively. The assumption $0<h,k<1$ means that the two predators are weak competitors. Moreover, the negative net growth rate $-r_i$ ($i=1,2$) of each predator means that each predator cannot survive without the feeding of the prey. The terms $-r_1u(x,t)^2$ and $-r_2v(x,t)^2$ stand for the intra-specific competition inside each predator population. The smallness of the predation rate $b$ means that the predation has a relatively low impact on the prey, whereas the largeness of the conversion rate $a$ is related to the relatively large fitness of the predators in the presence of the prey. The intrinsic growth rate of prey is given by $r_3\alpha(x-st)$ which is temporal-spatial dependent and takes both positive and negative values. From the modeling point of view, the function $\alpha$ represents the shifting environment effect,
 such as climate change. Although this term does not appear in the equation of each predator, the changing effect actually affects indirectly both predators due to the fact that predators are fed by prey.

We are concerned with the existence of forced waves for \eqref{pp}. Namely, a traveling wave solution of \eqref{pp} with speed $s$ is a solution in the form
$$(u,v,w)(x,t)=(\phi_1,\phi_2,\phi_3)(z),\ z:=x-st,$$
with $C^1(\R)$ functions $\phi_1,\phi_2,\phi_3$. Then $(\phi_1,\phi_2,\phi_3)$ satisfies
\be\label{TWS2}
\begin{cases}
-s\phi_1'(z)=d_1\bNU(z)+r_1\phi_1(z)\,[-1-\phi_1(z)-k\phi_2(z)+a\phi_3(z)], & z\in\bR,\\
-s\phi_2'(z)=d_2\bNV(z)+r_2\phi_2(z)\,[-1-h\phi_1(z)-\phi_2(z)+a\phi_3(z)], & z\in\bR,\\
-s\phi_3'(z)=d_3\bNW(z)+r_3\phi_3(z)\,[\alpha(z)-b\phi_1(z)-b\phi_2(z)-\phi_3(z)], & z\in\bR,
\end{cases}
\ee
where
$$\bN_i[\phi_i](z):=\int_\bR J_i(y)\phi_i(z-y)dy-\phi_i(z),\; i=1,2,3.$$
Throughout the paper, by a solution of~\eqref{TWS2}, we always mean a triplet $(\phi_1,\phi_2,\phi_3)$ of $C^1(\R)$ nonnegative bounded functions. Since the environment is hostile to the species far behind the shifting boundary $x=st$, by the assumption on $\alpha$, it can be expected that all species go extinction eventually. Hence we impose the boundary condition
$$(\phi_1,\phi_2,\phi_3)(-\infty)=(0,0,0).$$
On the other hand, without loss of generality we may assume that
\be\label{alpha1}
\alpha(+\infty)=1.
\ee
Condition~\eqref{alpha1} is assumed in all main results (Theorems~\ref{th:e4}-\ref{th:c2}). Then the following states are the only possible constant and non-trivial limiting states $(\phi_1,\phi_2,\phi_3)(+\infty)$ at $z=+\infty$:
$$E_1:=(0,0,1),\ \ E_2:=(u_p,0,w_p),\ \ E_3:=(0,u_p,w_p),\ \ E_4:=(u^*,v^*,w^*),$$
where the real numbers $u_p,w_p,u^*,v^*,w^*$ are all positive and given by
$$\begin{cases}
\displaystyle u_p:=\frac{a-1}{ab+1},\; w_p:=\frac{b+1}{ab+1},\vspace{5pt}\\
\displaystyle w^*:=\frac{1+b\gamma}{1+ab\gamma},\;\gamma:=\frac{2-h-k}{1-hk}>1,\; v^*:=\frac{1-h}{1-hk}(aw^*-1),\; u^*:=\frac{1-k}{1-hk}(aw^*-1).\end{cases}$$
Biologically, the state $E_1$ corresponds to a saturated aboriginal prey living in the habitat and there are two invading alien predators; $E_2$ or $E_3$ is a pair of aboriginal co-existent predator-prey and an invading alien predator; lastly, $E_4$ is the positive co-existence state.

The study of forced waves has attracted a lot of attention recently. We refer the reader to, e.g., \cite{bd09,bf18,br08,br09,bg19,fpz21,flw16,h97,hz17,v15,zk13} for the case of scalar equations with local diffusion and to~\cite{co1,dsz20,lwz18,wz19,zz20} for the case of scalar equations with nonlocal dispersal. For two-species models, we refer the reader to~\cite{ywl19} for a cooperative model, \cite{bdd14,dll21,ww21,wwz19,ywz19} for competition models, and~\cite{choi21} for a predator-prey system with both classical diffusion and nonlocal dispersal. Recently, forced waves for a three-species predator-prey system were investigated in~\cite{choi22} for two competing preys and one predator with the classical diffusion. In~\cite{choi22}, the authors obtained both front and pulse types forced waves.

In the case of nonlocal dispersal, one usually assume that each kernel function is of compact support (so that $\tilde\lambda_i=-\infty$ and $\hat{\ld}_i=+\infty$ in (J3)). One of the motivations of this work is to remove the restriction on the compact support of kernel(s). This question has been left open in~\cite{choi21} for the two-species predator-prey system.

We now describe our main results as follows. We repeat that the condition~\eqref{c1} is assumed in all results, as are~(J1)-(J3) and ($\alpha 1$)-($\alpha 2$). First, the following theorem provides the existence of waves connecting $(0,0,0)$ and the co-existence state $E_4$, whatever the positive forced speed $s$ may be.

\begin{theorem}\label{th:e4}
In addition to~\eqref{c1} and~\eqref{alpha1}, suppose
\be\label{c2}
b<\min\left\{\frac{1-h}{2a},\frac{1-k}{2a}\right\}.
\ee
Then, for any $s>0$, there exists a positive\footnote{By positive, we mean that each component $\phi_i$ ($i=1,2,3$) is positive in $\R$.} solution $(\phi_1,\phi_2,\phi_3)$ of \eqref{TWS2} such that
$$(\phi_1,\phi_2,\phi_3)(-\infty)=(0,0,0)\ \hbox{ and }\ (\phi_1,\phi_2,\phi_3)(+\infty)=E_4.$$
\end{theorem}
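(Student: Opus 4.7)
The natural route is a super/sub-solution method combined with Schauder's fixed point theorem on bounded intervals, followed by a limiting procedure as the length of the interval tends to $+\infty$. The plan proceeds in four steps.

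\textbf{Step 1: ordered upper and lower solutions.} I would first construct continuous bounded nonnegative functions $\underline\phi=(\underline\phi_1,\underline\phi_2,\underline\phi_3)\le\bar\phi=(\bar\phi_1,\bar\phi_2,\bar\phi_3)$ such that $\bar\phi$ (resp.\ $\underline\phi$) satisfies~\eqref{TWS2} with equalities replaced by $\le$ (resp.\ $\ge$). For the upper solution, a convenient choice is a constant triplet of the form $(M,M,1)$: using $\alpha(z)\le 1$ and picking $M$ large enough that $1+M+kM\ge a$ and $1+hM+M\ge a$, the super-solution inequalities reduce to elementary algebra. For the lower solution I would take each $\underline\phi_i$ to vanish for $z\le z_0$, to equal a positive constant slightly below the corresponding coordinate of $E_4$ for $z\ge z_1>z_0$, and to interpolate smoothly on $[z_0,z_1]$, with the positive plateau values chosen using (c2) to guarantee the kinetic inequality at $+\infty$ (here (c2) is what makes the coordinates of $E_4$ robust against small perturbations in all three directions). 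The sub-solution inequality on the interpolating region is the subtle point: the tails of $\underline\phi_i$ must be tuned so that their convolutions with $J_i$, whose support is no longer assumed compact, remain controlled -- this is where condition (J3) replaces the usual compact-support hypothesis.

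\textbf{Step 2: solution on a bounded interval.} On $[-n,n]$, I would add a large multiple $\Lambda_i\phi_i$ to both sides of each equation of~\eqref{TWS2}, rewriting it as $(-s\partial_z+\Lambda_i)\phi_i=\mathcal{F}_i(\phi_1,\phi_2,\phi_3)$ with $\mathcal{F}_i$ monotone in the partial order $(\phi_1,\phi_2,-\phi_3)\preceq$ on $\bR^3$ that is adapted to the predator-prey sign pattern. The left-hand side inverts by a Volterra-type integration, producing an integral operator whose fixed points are solutions of~\eqref{TWS2} on $[-n,n]$. Schauder's fixed-point theorem applied on the compact convex order interval $\{\underline\phi\le\phi\le\bar\phi\}\subset C([-n,n];\bR^3)$, using the sandwich bounds together with the boundary data $\phi(\pm n)=\underline\phi(\pm n)$, yields a solution $\phi^n$ for every $n$.

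\textbf{Step 3: passage to the limit $n\to+\infty$.} The sandwich bounds give uniform $L^\infty$ control, and the wave equations themselves then yield uniform equicontinuity and $C^1$ bounds on compact subsets of $\bR$ (the convolution terms are bounded because $J_i$ is integrable). A diagonal extraction produces $\phi^n\to\phi$ in $C^1_{\mathrm{loc}}$, and the limit $\phi$ solves~\eqref{TWS2} on all of $\bR$ with $\underline\phi\le\phi\le\bar\phi$. Strict positivity of each component follows from a strong maximum principle for the nonlocal linear equation, using the positivity of $J_i$ on the intervals isolated in~(J1). At $-\infty$ one must upgrade the bound $\phi\ge\underline\phi\equiv 0$ to actual convergence: using $\alpha(-\infty)<0$, the equation for $\phi_3$ together with the negative net growth rates of the two predators gives $\phi_i(-\infty)=0$ for $i=1,2,3$; the exponential integrability in~(J3) is used to dismiss possible contributions from the non-compactly supported tails of the convolution.

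\textbf{Step 4: limit at $+\infty$ and the main obstacle.} I expect the hardest step to be the convergence $\phi(+\infty)=E_4$. Given any $z_n\to+\infty$, the translates $\phi(\cdot+z_n)$ are uniformly bounded and equicontinuous, hence converge in $C^1_{\mathrm{loc}}$ along a subsequence to some bounded nonnegative solution $\psi$ of the autonomous nonlocal system obtained by setting $\alpha\equiv 1$ in~\eqref{TWS2}, still sandwiched between the constants $\underline\phi(+\infty)$ (positive by construction) and $\bar\phi$. Under (c2), the kinetic Lotka--Volterra system with $\alpha=1$ admits $E_4$ as a globally attracting equilibrium in the positive cone, and this property is inherited by the nonlocal autonomous system via a Liapunov or a sweeping argument based on monotone iteration in the order interval. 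Hence $\psi\equiv E_4$, and since the limit does not depend on the subsequence, $\phi(+\infty)=E_4$. The delicate point, and the main obstacle I foresee, is precisely this global attractivity argument in the nonlocal setting without compact support: the standard Liapunov functionals used for compactly supported kernels must be replaced by an argument that exploits the ordering $\underline\phi(+\infty)\le\psi\le\bar\phi$ together with the strict smallness of $b$ imposed by~(c2), in order to squeeze $\psi$ onto $E_4$.
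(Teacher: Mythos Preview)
Your overall architecture (ordered super/sub-solutions, Schauder fixed point, limit at $\pm\infty$) matches the paper's, but Step~1 contains a genuine gap. With the constant upper solution you propose, say $\bar\phi=(a-1,a-1,1)$, the sub-solution inequality~\eqref{l1} pairs $\underline\phi_1$ with $\bar\phi_2=a-1$, not with $\underline\phi_2$: on the plateau it reads (after dropping the nonpositive term $\mathcal{N}_1[\underline\phi_1]$) $-1-\underline\phi_1-k(a-1)+a\underline\phi_3\ge 0$. If you take the plateau of $\underline\phi_3$ near $w^*$ and that of $\underline\phi_1$ near $u^*$, this quantity equals $-1-u^*-k(a-1)+aw^*=k(v^*-(a-1))<0$, so the inequality fails. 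The same obstruction forces the plateau of $\underline\phi_3$ to lie below $\gamma_3:=1-2b(a-1)<w^*$, and then the plateaus of $\underline\phi_1,\underline\phi_2$ below $\gamma_1,\gamma_2$. Thus the sandwich $\underline\phi\le\phi\le\bar\phi$ cannot by itself pin the limit to $E_4$, contrary to what your phrase ``slightly below the corresponding coordinate of $E_4$'' suggests; condition~\eqref{c2} does not give robustness of $E_4$ against replacing $v^*$ by the much larger $a-1$.

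The paper resolves both difficulties differently. For the lower solution it does not interpolate by hand: it invokes Proposition~\ref{prop:key} three times to produce $\underline\phi_3,\underline\phi_2,\underline\phi_1$ as genuine positive solutions of scalar nonlocal equations (with growth coefficients $\alpha-2b(a-1)$, $a\underline\phi_3-1-h(a-1)$, $a\underline\phi_3-1-k(a-1)$, and limits $\gamma_3,\gamma_2,\gamma_1$ at $+\infty$), which automatically satisfy \eqref{l1}--\eqref{l3} for all $z$ and sidestep the interpolation-region estimates entirely, compact support or not. For Step~4 the paper does not argue global attractivity of $E_4$ for the autonomous nonlocal system; instead it runs the contracting-parallelepiped argument of \cite{choi21,gnow20}: setting $\phi_i^\pm=\limsup/\liminf_{z\to+\infty}\phi_i$, one checks algebraically (this is where~\eqref{c2} is actually used) that the box $\prod[\phi_i^-,\phi_i^+]$ is contained in $\prod[m_i(\theta),M_i(\theta)]$ for every $\theta<1$, forcing $\phi_i^-=\phi_i^+=$ the $i$th coordinate of $E_4$. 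This avoids any Liapunov or dynamical-systems argument and does not need the lower solution to approach $E_4$.
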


Next, for waves connecting $(0,0,0)$ and the predator-free state $E_1=(0,0,1)$, we let
\be\label{defs*i}
s_i^*:=\inf_{\ld\in(0,\hat{\ld}_i)}Q_i(\ld),\ \ Q_i(\ld):=\frac{d_i[I_i(\ld)-1]+ r_i(a-1)}{\ld},\ \ \lambda\in(0,\hat\lambda_i),\ \ i=1,2.
\ee
From~\eqref{c1} and (J1)-(J3) and the comments after (J1)-(J3), each function $Q_i$ ($i=1,2$) is continuous and positive in $(0,\hat\lambda_i)$, $Q_i(\lambda)\to+\infty$ as $\lambda\downarrow0$ or $\lambda\uparrow\hat\lambda_i$ (this last property is immediate if $\hat\lambda_i<+\infty$, and it holds as well if $\hat\lambda_i=+\infty$ since $I'_i(\lambda)=\int_{\R}J_i(y)ye^{\lambda y}dy\to+\infty$ as $\lambda\to+\infty=\hat\lambda_i$ in that case). Hence, each $s^*_i$ is positive and the infimum in~\eqref{defs*i} is a minimum, and it is reached by a unique $\lambda^*_i\in(0,\hat\lambda_i)$ (since $I_i''$ is positive in $(0,\hat\lambda_i)$, and even in the whole interval of definition $(\tilde\lambda_i,\hat\lambda_i)$). Furthermore, $Q_i$ is decreasing in $(0,\lambda^*_i]$ and increasing in $[\lambda^*_i,\hat\lambda_i)$. Then we have

\begin{theorem}\label{th:e1}
Suppose~\eqref{c1} and~\eqref{alpha1}. If $s>\max\{s_1^{*},s^*_2\}$, then there exists a positive solution $(\phi_1,\phi_2,\phi_3)$ of \eqref{TWS2} such that
\be\label{bc-1}
(\phi_1,\phi_2,\phi_3)(-\infty)=(0,0,0)\ \hbox{ and }\ (\phi_1,\phi_2,\phi_3)(+\infty)=E_1.
\ee
Moreover, when $(\tilde\lambda_i,\hat\lambda_i)=(-\infty,+\infty)$ for some $i\in\{1,2\}$, then a positive solution $\phitri$ of~\eqref{TWS2} and~\eqref{bc-1} exists only if $s\ge s_i^*$.
\end{theorem}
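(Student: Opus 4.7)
My plan is to combine a super-/sub-solution construction with a Schauder fixed-point argument on truncated intervals, following the standard scheme for nonlocal-dispersal predator-prey wave problems (e.g.~\cite{choi21}). Since $s>s_i^*$ for $i=1,2$, the strict convexity of $Q_i$ together with $Q_i\to+\infty$ at the endpoints of $(0,\hat\lambda_i)$ yields two positive roots $0<\mu_i^-<\lambda_i^*<\mu_i^+<\hat\lambda_i$ of $Q_i(\mu)=s$. I set $\mu_i:=\mu_i^-$, which will govern the decay of the predator profile $\phi_i$ at $+\infty$, and pick $\nu_i\in(\mu_i,\mu_i^+)$ so that $Q_i(\nu_i)<s$; the slack $s-Q_i(\nu_i)>0$ will be the room needed to absorb nonlinear corrections in the sub-solution inequality.

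For the super-solutions I would take
$$\bar\phi_i(z)=\min\{e^{-\mu_i z},M_i\}\quad(i=1,2),\qquad\bar\phi_3\equiv 1,$$
where the constants $M_i$ are taken sufficiently large; $\bar\phi_3\equiv 1$ is admissible because $\alpha\leq 1$ by $(\alpha 1)$ and the predation contribution in the $w$-equation is non-positive. For the sub-solutions I would take the non-trivial bumps
$$\underline\phi_i(z)=\max\{A_ie^{-\mu_i z}-B_ie^{-\nu_i z},0\}\quad(i=1,2),$$
together with a function $\underline\phi_3$ interpolating between $0$ at $-\infty$ and $1-\varepsilon$ at $+\infty$: on $(-\infty,z_0]$ constructed from the positive root of the characteristic equation of the $w$-equation linearised at the trivial state, and on $[z_0,+\infty)$ taken to be a perturbation of the form $1-Ce^{-\rho z}$ exploiting $(\alpha 2)$. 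The linearised computation at $+\infty$, using $Q_i(\mu_i)=s$, reduces the sub-solution inequality for $\phi_i$ on the positive portion of $\underline\phi_i$ to
$$\text{LHS}-\text{RHS}=B_i\nu_i\bigl(Q_i(\nu_i)-s\bigr)e^{-\nu_i z}+O(e^{-(\mu_i+\sigma)z}),\quad\sigma:=\min\{\rho,\mu_1,\mu_2\},$$
and the leading term is strictly negative by construction, dominating the nonlinear correction provided $\nu_i-\mu_i<\sigma$. With the ordered pair of super- and sub-solutions in hand, I would introduce a monotone auxiliary system on each truncated interval $[-n,n]$ (adding $L\phi_i$ to both sides with $L$ large to make the reactions monotone in $\phi_i$), apply Schauder's fixed point theorem in the ordered sector $\underline\phi\leq\phi\leq\bar\phi$, and pass to the limit $n\to+\infty$ via Arzel\`a--Ascoli. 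The prescribed limits at $\pm\infty$ follow from the squeeze between $\underline\phi$ and $\bar\phi$, and positivity of $\phi_1,\phi_2$ on all of $\R$ follows from the positivity of the bumps on a bounded interval together with a strong-maximum-principle argument for the nonlocal operators.

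For the necessity direction, assume $(\tilde\lambda_1,\hat\lambda_1)=(-\infty,+\infty)$ and that a positive solution exists for some $s<s_1^*$. Set $s_1^*(\varepsilon):=\inf_{\mu>0}(d_1(I_1(\mu)-1)+r_1(a-1-\varepsilon))/\mu$; this quantity depends continuously on $\varepsilon$ and equals $s_1^*$ at $\varepsilon=0$, so one can fix $\varepsilon>0$ small enough that $s<s_1^*(\varepsilon)$. The hypothesis $\phi_3\to 1$, $\phi_1\to 0$, $\phi_2\to 0$ at $+\infty$ then yields
$$-s\phi_1'(z)\geq d_1(J_1\ast\phi_1-\phi_1)(z)+r_1(a-1-\varepsilon)\phi_1(z)$$
for $z$ sufficiently large. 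Since $I_1$ is finite on all of $\R$, I would first establish a rough exponential decay of $\phi_1$ at $+\infty$ by iterating the linear comparison principle, then multiply the above inequality by $e^{\mu z}$ and integrate on $[N,+\infty)$ to force the algebraic inequality $s\mu\geq d_1(I_1(\mu)-1)+r_1(a-1-\varepsilon)$ for all small positive $\mu$, contradicting $s<s_1^*(\varepsilon)$. I expect the main technical obstacles to be (i) verifying the sub-solution inequality for $\underline\phi_i$ across the transition region where the bump switches between its two defining branches, while keeping the couplings with $\underline\phi_3$ and $\bar\phi_{3-i}$ consistent, and (ii) rigorously establishing the a priori exponential decay of $\phi_1$ at $+\infty$ needed for the Laplace-transform step in the necessity direction, which I would handle by bootstrapping from a polynomial bound via iterated convolution estimates.
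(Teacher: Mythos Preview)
Your existence argument follows essentially the same super-/sub-solution scheme as the paper: exponential upper barriers $\min\{Ce^{-\mu_i z},\,\text{const}\}$ for the predators, the constant~$1$ for the prey, and two-term sub-barriers $\max\{A_ie^{-\mu_i z}-B_ie^{-\nu_i z},0\}$ together with a prey sub-solution tending to~$1$. Two technical points deserve correction. First, your description of $\underline\phi_3$ is muddled: linearising the $w$-equation at the \emph{trivial} state produces a \emph{decay} rate (since $\alpha(-\infty)<0$), not a growth rate useful for a sub-solution; the paper instead takes simply $\underline\phi_3(z)=\max\{1-e^{-\lambda_0 z},0\}$ with $\lambda_0>0$ chosen small enough that $d_3[I_3(\lambda_0)-1]-s\lambda_0<0$ and $\lambda_0<\min\{\mu_1,\mu_2,\rho\}$, so the perturbation $e^{-\lambda_0 z}$ dominates both the predation terms $b\bar\phi_i\sim e^{-\mu_i z}$ and the environmental correction $\varepsilon e^{-\rho z}$. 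Second, your claim that ``the prescribed limits at $\pm\infty$ follow from the squeeze'' is incorrect at $-\infty$: there the upper solutions are positive constants, so the squeeze gives nothing. The paper handles this via a separate proposition (its Proposition~2.3), exploiting $\alpha(-\infty)<0$ to force all three components to vanish; you will need the same. The paper also runs Schauder directly on $\R$ with a weighted sup-norm rather than truncating to $[-n,n]$, but this is a stylistic difference.

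For the necessity direction you take a genuinely different route. The paper sets $\zeta:=\phi_1'/\phi_1$, invokes a result of Zhang--Li--Wang to conclude that $\zeta(+\infty)=-\lambda$ exists, and reads off the characteristic relation $s\lambda=d_1[I_1(\lambda)-1]+r_1(a-1)$, which immediately gives $s\ge s_1^*$. Your Laplace-transform approach can be made to work, but is heavier: the boundary contributions from $\int_N^\infty(J_1*\phi_1)e^{\mu z}\,dz$ do not cancel cleanly, and the contradiction hinges on knowing that $\int_N^\infty\phi_1 e^{\mu z}\,dz$ blows up as $\mu$ approaches some finite abscissa (otherwise $\phi_1$ would decay super-exponentially, which must be excluded separately). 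The paper's logarithmic-derivative shortcut avoids all of this at the price of citing an external lemma; your route is more self-contained but requires the bootstrapped exponential decay you flag as obstacle~(ii).
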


For the waves connecting $(0,0,0)$ and the mixed state $E_2=(u_p,0,w_p)$, we let
\be\label{defbeta2}
\beta_2:=-1-hu_p+aw_p=\frac{(a-1)(1-h)}{ab+1}>0
\ee
and
\be\label{defs**2}
s_2^{**}:=\inf_{\ld\in(0,\hat{\ld}_2)}R_2(\ld),\ \ R_2(\ld):=\frac{d_2[I_2(\ld)-1]+r_2\beta_2}{\ld},\ \ \lambda\in(0,\hat\lambda_2).
\ee
As for $Q_i$ in~\eqref{defs*i}, the function $R_2$ is continuous and positive in $(0,\hat\lambda_2)$, $R_2(\lambda)\to+\infty$ as $\lambda\downarrow0$ or $\lambda\uparrow\hat\lambda_2$,~$s^{**}_2$ is positive and the infimum in the definition of $s_2^{**}$ is a minimum, reached by a unique $\lambda_2^{**}\in(0,\hat\lambda_2)$. Furthermore, $R_2$ is decreasing in $(0,\lambda^{**}_2]$ and increasing in $[\lambda^{**}_2,\hat\lambda_2)$. Then we have

\begin{theorem}\label{th:e2}
In addition to~\eqref{c1} and~\eqref{alpha1}, suppose
\be\label{de2}
\max\{d_1,d_3\}\le d_2,\ \ J_1=J_2=J_3\hbox{ in $\bR$,}
\ee
and
\be\label{re2}
r_1[1+k(a-1)]\le r_2\beta_2.
\ee
If $s>s_2^{**}$ and $s\ge R_2(\rho)$, with $\rho>0$ as in~{\rm{($\alpha 2$)}}, then there exists a positive solution $(\phi_1,\phi_2,\phi_3)$ of \eqref{TWS2} such that
\be\label{bc-2}
(\phi_1,\phi_2,\phi_3)(-\infty)=(0,0,0)\ \hbox{ and }\ (\phi_1,\phi_2,\phi_3)(+\infty)=E_2.
\ee
Moreover, when $(\tilde\lambda_2,\hat\lambda_2)=(-\infty,+\infty)$, a positive solution $\phitri$ of \eqref{TWS2} and \eqref{bc-2} exists only if $s\ge s_2^{**}$.
\end{theorem}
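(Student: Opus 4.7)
My plan is to apply the now-standard sub/super-solution approach for forced waves with nonlocal dispersal: construct a triple of sub- and super-solutions $(\uphi_1,\uphi_2,\uphi_3)\le(\ophi_1,\ophi_2,\ophi_3)$ encoding the correct asymptotic behavior at $\pm\infty$, obtain the profile by a Schauder fixed-point argument on truncated intervals $[-L,L]$, and identify the boundary behavior at $+\infty$ by a translation-limit argument. Since $s>s_2^{**}$, the equation $R_2(\lambda)=s$ has two positive roots $\lambda_-<\lambda_2^{**}<\lambda_+$; I set $\lambda:=\lambda_-$, which will govern the decay of $\phi_2$ at $+\infty$.

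For super-solutions I would try $\ophi_1(z):=\min\{u_p+D_1 e^{-\sigma z},N_1\}$, $\ophi_2(z):=\min\{M_2 e^{-\lambda z},N_2\}$, and $\ophi_3(z):=\min\{w_p+C_3 e^{-\sigma z},N_3\}$ with $\sigma\in(0,\rho]$, together with sub-solutions $\uphi_2\equiv 0$ and $\uphi_1(z):=\max\{0,u_p(1-K_1 e^{-\sigma z})\}$, $\uphi_3(z):=\max\{0,w_p(1-K_3 e^{-\sigma z})\}$. The super-solution inequality for $\ophi_2$ on its exponential branch, linearised at $E_2$ where $\phi_1\approx u_p$, reduces to $s\lambda\ge d_2(I_2(\lambda)-1)+r_2\beta_2$, i.e.\ $s\ge R_2(\lambda)$, which holds by construction; on the constant branch, a sufficiently large $N_2$ absorbs the nonlinearity. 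The exponential corrections in $\ophi_1,\ophi_3$ absorb both the nonautonomous source $r_3 w_p(\alpha(z)-1)=O(e^{-\rho z})$ in the third equation and the coupling between the first and third equations, and their characteristic inequalities, thanks to $\max\{d_1,d_3\}\le d_2$ and $J_1=J_2=J_3$, are implied by $s\ge R_2(\rho)$. The sub-solution inequalities at $+\infty$ exploit the identities $-1-u_p+aw_p=0$ and $1-bu_p-w_p=0$ that characterise $(u_p,w_p)$ as the interior equilibrium of the $(u,w)$-subsystem; near $-\infty$ both sub-solutions vanish and no inequality is required.

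The profile is then produced via Schauder's theorem on the order interval between the sub- and super-solutions, applied to the map sending $\phitri$ to the unique bounded $C^1$ solution of the system with its nonlinear coefficients frozen at $\phitri$ and boundary data equal to the super-solution on $[-L,L]$; uniform $C^1$ bounds provide compactness, and a diagonal extraction $L\to\infty$ yields a solution on $\R$ still trapped between the two triples. The limit $\phitri(-\infty)=(0,0,0)$ is immediate from the exponential envelope. For $\phitri(+\infty)=E_2$ I would argue by translation: any locally uniform subsequential limit of $\phitri(\cdot+z_n)$ with $z_n\to+\infty$ solves the autonomous system with $\alpha\equiv 1$, has second component identically zero by the super-solution squeeze, and has its first and third components sandwiched between translates of $\uphi_i$ and $\ophi_i$ both tending to $(u_p,w_p)$, which identifies the limit as $E_2$.

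The necessary condition in the full-support case $(\tilde\lambda_2,\hat\lambda_2)=(-\infty,+\infty)$ is obtained by contradiction: if $0<s<s_2^{**}$, then $d_2(I_2(\lambda)-1)+r_2\beta_2-s\lambda>0$ for every $\lambda\in\R$, and multiplying the $\phi_2$-equation linearised at $E_2$ by $e^{\lambda z}$ and integrating on a half-line $[A,\infty)$ with $A$ so large that the nonlinear remainders are dominated by $r_2\beta_2\phi_2$ produces, after using $\phi_2(+\infty)=0$, an algebraic identity whose sign and growth in $\lambda$ as $\lambda\to+\infty$ are mutually incompatible. I expect the main obstacle to be the verification that the super-solution triple is internally consistent across the three coupled equations, and in particular the compatibility of the super-solution for $\phi_1$ with the cross term $-kr_1\phi_1\phi_2$ arising from interspecific competition: this is precisely the point at which the structural hypothesis $r_1[1+k(a-1)]\le r_2\beta_2$ becomes unavoidable.
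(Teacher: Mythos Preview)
Your overall architecture (sub/super-solutions plus Schauder) matches the paper's, but there is a genuine gap: you set $\uphi_2\equiv 0$. With that choice the order interval $[\uphi,\ophi]$ contains triples with vanishing second component, and nothing in a Schauder-type argument selects a fixed point with $\phi_2\not\equiv 0$. Indeed, if $(\phi_1,\phi_3)$ solves the reduced two-species system obtained by setting $\phi_2=0$, then $(\phi_1,0,\phi_3)$ is a perfectly good fixed point of your map and lies in your order interval; the theorem, however, asks for a \emph{positive} solution. The paper avoids this by taking a nontrivial lower barrier
\[
\uphi_2(z)=\max\{(a-1)e^{-\lambda_5 z}-q\,e^{-\nu z},\,0\},\qquad \lambda_5<\nu<\min\{\lambda_6,2\lambda_5\},
\]
which is strictly positive for large $z$ and forces $\phi_2>0$ via the strong maximum principle. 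This two-exponential lower solution is the missing ingredient.

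A few smaller points. First, the claim that $(\phi_1,\phi_2,\phi_3)(-\infty)=(0,0,0)$ is ``immediate from the exponential envelope'' is not correct: your super-solutions $\ophi_1,\ophi_2,\ophi_3$ tend to the constants $N_1,N_2,N_3$ at $-\infty$, not to zero. The paper obtains this limit from a separate argument (any nonnegative bounded solution decays at $-\infty$ because $\alpha(-\infty)<0$). Second, you locate the hypothesis $r_1[1+k(a-1)]\le r_2\beta_2$ in the \emph{super}-solution inequality for $\phi_1$, but with $\uphi_2\equiv 0$ the cross term vanishes there; the constraint actually enters through the \emph{sub}-solution inequality $\mL_1$, via the term $-k\ophi_2$. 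Third, the paper synchronises all exponential rates to the single value $\lambda_5$ (your $\lambda$); using a distinct $\sigma\in(0,\rho]$ for $\ophi_1,\ophi_3,\uphi_1,\uphi_3$ creates a rate mismatch in $\mL_1$ (the term $-kM_2e^{-\lambda z}$ versus the dispersion contribution at rate $\sigma$) which is delicate unless $\sigma=\lambda$. Finally, for the necessity part the paper argues via $\zeta=\phi_2'/\phi_2$ and an existing result identifying its limit at $+\infty$, rather than by integrating against $e^{\lambda z}$; your integrated approach may be workable for nonlocal dispersal but would need a careful treatment of the convolution term.
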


Notice that, if condition ($\alpha 2$) is satisfied for a certain $\rho>0$, so is it for every $\rho'\in(0,\rho]$. Therefore, in Theorem~\ref{th:e2}, one can always assume without loss of generality that $\rho\in(0,\hat\lambda_2)$ and that $R_2(\rho)$ is a real number. We also point out that, if $\alpha(z)=\alpha(+\infty)$ for all $z$ large enough, then ($\alpha 2$) is satisfied for all $\rho>0$, hence $\rho$ can be chosen so that $R_2(\rho)=s_2^{**}$ and the condition $s\ge R_2(\rho)$ in the statement can then be dropped.

By exchanging the roles of $u$ and $v$, a similar theorem to Theorem~\ref{th:e2}, on forced waves connecting the trivial state and $E_3$, can be derived. We omit it here.

One should notice that the above theorems hold under the assumptions (J1)-(J3) on kernels $J_i$, $i=1,2,3$, and conditions ($\alpha$1)-($\alpha$2) on $\alpha$. In particular, the kernel functions $J_i$, $i=1,2,3$, are not assumed to be compactly supported or symmetric. It is worth to mention that the method of this work can also be applied to the 2-species system studied in~\cite{choi21} and that the compact support assumption and the symmetry of the kernels can then be removed.

Next, we turn to the derivation of waves with critical speeds. We divide our main results into three cases. The following two theorems show, under some conditions, the existence of forced waves connecting the trivial state to the predator-free state $E_1=(0,0,1)$, for the minimal possible shifting speed. We recall that $s^*_1$ and $s^*_2$ are defined in~\eqref{defs*i}.

\begin{theorem}\label{th:c1}
In addition to~\eqref{c1} and~\eqref{alpha1}, suppose that $s^*_1=s^*_2$, and that $J_1$ and $J_2$ have compact supports. Then a positive solution of~\eqref{TWS2} with~\eqref{bc-1} exists for $s=s_1^*=s_2^*$.
\end{theorem}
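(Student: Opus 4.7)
The plan is to realise the critical-speed wave as a limit of supercritical waves. Set $s^*:=s_1^*=s_2^*$. For each $n\in\N$, put $s_n:=s^*+1/n$, and let $(\phi_1^n,\phi_2^n,\phi_3^n)$ be a positive solution of~\eqref{TWS2} at speed $s_n$ satisfying~\eqref{bc-1}, as furnished by Theorem~\ref{th:e1}. Using translation invariance of~\eqref{TWS2} in $z$ together with the continuity of $\phi_3^n$ with $\phi_3^n(-\infty)=0$ and $\phi_3^n(+\infty)=1$, I translate each triplet so that
\[
\phi_3^n(0)=\tfrac{1}{2}\ \hbox{ and }\ \phi_3^n(z)<\tfrac{1}{2}\ \hbox{ for every }z<0,
\]
by choosing $0$ as the leftmost crossing of the level $1/2$. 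This anchors the transition region of the front component and prevents it from collapsing in the limit.

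Next, I establish uniform-in-$n$ estimates and extract a subsequential limit. The sub- and supersolutions used in the proof of Theorem~\ref{th:e1} provide $n$-independent $L^\infty$ bounds on $(\phi_1^n,\phi_2^n,\phi_3^n)$, and~\eqref{TWS2} together with $\|J_i\|_{L^1(\R)}=1$ yields uniform $C^1$ bounds. A standard Arzel\`a--Ascoli/diagonal extraction gives local uniform convergence along a subsequence to a limit $(\phi_1,\phi_2,\phi_3)$ of nonnegative bounded $C^1$ functions; dominated convergence in the convolutions (using integrability of the $J_i$) shows the limit solves~\eqref{TWS2} at $s=s^*$, with $\phi_3(0)=1/2$ and $\phi_3\le 1/2$ on $(-\infty,0)$.

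The core step, and what I expect to be the main obstacle, is to derive uniform decay estimates at $-\infty$ of the form
\[
\phi_i^n(z)\le C\,(1-z)\,e^{\lambda_i^* z}\quad\hbox{for all }z\le 0,\ i=1,2,
\]
with a constant $C$ independent of $n$. At the critical speed $s^*$, the characteristic equation $d_i[I_i(\lambda)-1]-s^*\lambda+r_i(a-1)=0$ has $\lambda_i^*$ as a double root, so the pure exponential $e^{\lambda_i^* z}$ no longer produces strict super- or subsolutions. One must instead employ supersolutions of the degenerate form $(A-Bz)e^{\lambda_i^* z}$, and here the compact support of $J_1$ and $J_2$ is decisive: it makes the action of $\bN_i$ on such polynomial-times-exponential functions well-defined and explicitly computable, and it allows $A,B$ to be calibrated uniformly in $n$ by a continuity argument as the two roots $\lambda_{i,n}^\pm$ of $Q_i(\lambda)=s_n$ pinch onto $\lambda_i^*$.

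Once the uniform decay at $-\infty$ is in hand, passage to the limit gives $(\phi_1,\phi_2)(-\infty)=(0,0)$, and the analogous (easier) scalar argument, using that $\alpha<0$ far to the left, yields $\phi_3(-\infty)=0$. For the $+\infty$ limit, the normalization $\phi_3(0)=1/2$ excludes the trivial state, and combined with uniform decay at $+\infty$ for $\phi_1^n,\phi_2^n$ inherited from Theorem~\ref{th:e1}, with the structure of constant nonnegative steady states ($E_1$--$E_4$), and with $\alpha(+\infty)=1$, it forces $(\phi_1,\phi_2,\phi_3)(+\infty)=E_1$. Finally, strict positivity of each $\phi_i$ on $\R$ follows from a strong maximum principle for the nonlocal scalar equations they satisfy, using the pointwise positivity condition in (J1).
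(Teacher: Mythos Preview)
Your limiting approach has a fatal flaw at the very first step: system~\eqref{TWS2} is \emph{not} translation invariant in $z$, because the coefficient $\alpha(z)$ in the third equation is genuinely heterogeneous. The paper itself emphasises this point. Hence you cannot translate each $(\phi_1^n,\phi_2^n,\phi_3^n)$ to normalise $\phi_3^n(0)=1/2$; any shift changes the equation. Without this anchoring device, and more importantly without uniform-in-$n$ positive lower bounds on $\phi_1^n,\phi_2^n$, the limit could well satisfy $\phi_1\equiv0$ or $\phi_2\equiv0$. Indeed, the explicit lower solutions from Theorem~\ref{th:e1} degenerate as $s_n\downarrow s^*$: the constants $p_1,p_2$ in~\eqref{q1}--\eqref{q2} blow up because $g_i(\mu_i)\to0$, so the region where $\underline{\phi}_i^n>0$ escapes to $+\infty$. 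You offer no mechanism to prevent the predator components from collapsing in the limit. (Your decay estimate is also misplaced: the double root $\lambda_i^*$ governs the decay of $\phi_i$ at $+\infty$, where the linearisation about $E_1$ gives growth rate $r_i(a-1)$; the behaviour at $-\infty$ is controlled by $\alpha(-\infty)<0$ and has nothing to do with $\lambda_i^*$.)

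The paper avoids these difficulties entirely by constructing upper and lower solutions directly at the critical speed. The key idea is to replace the pure exponentials of the supercritical case by the degenerate profiles $Bze^{-\lambda_i^* z}$ for the upper solutions $\overline{\phi}_1,\overline{\phi}_2$, and $Bze^{-\lambda_i^* z}-p\,z^{1/2}e^{-\lambda_i^* z}$ for the lower solutions $\underline{\phi}_1,\underline{\phi}_2$. The compact support of $J_1,J_2$ is used not for a continuity-in-$s$ argument as you suggest, but to make the convolution of $J_i$ against $(z-y)^{1/2}e^{\lambda_i^*(z-y)}$ well defined and to obtain the explicit lower bound on the quantity $A_1(z)$ (essentially a discrete second derivative) via a Taylor-type inequality on $[-S,S]$. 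Lemma~\ref{luslem} then produces the wave directly, with no passage to the limit.
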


Note that the condition $s^*_1=s^*_2$ is fulfilled especially if $d_1=d_2$, $r_1=r_2$, and $J_1=J_2$ in $\R$.

\begin{theorem}\label{th:c12}
In addition to~\eqref{c1} and~\eqref{alpha1}, suppose that $s^*_1>s^*_2$ and that $J_1$ has a compact support. Then a positive solution of~\eqref{TWS2} with~\eqref{bc-1} exists for $s=s_1^*$.
\end{theorem}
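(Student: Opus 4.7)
My plan is to approximate the critical speed $s_1^*$ from above by supercritical speeds and pass to the limit. I would choose a strictly decreasing sequence $s_n\downarrow s_1^*$ with $s_n>s_1^*>s_2^*$ for every $n$, invoke Theorem~\ref{th:e1} to obtain a positive solution $\phi^n=(\phi_1^n,\phi_2^n,\phi_3^n)$ of~\eqref{TWS2} at $s=s_n$ with $\phi^n(-\infty)=(0,0,0)$ and $\phi^n(+\infty)=E_1$, and then extract a locally uniform limit. Uniform-in-$n$ $L^\infty$ bounds $\phi_1^n,\phi_2^n\le u_p$ and $\phi_3^n\le 1$ follow from the constant super-solutions used in the proof of Theorem~\ref{th:e1}. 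Rewriting each equation of~\eqref{TWS2} in the form $\phi_i^{n\prime}=s_n^{-1}\bigl(d_i\mathcal{N}_i[\phi_i^n]+r_i\phi_i^n[\cdots]\bigr)$ and using $s_n\ge s_1^*>0$ then yields uniform bounds on $\phi_i^{n\prime}$, so Arzel\`a--Ascoli together with a diagonal extraction produces a nonnegative bounded $C^1$ limit $\phi^*=(\phi_1^*,\phi_2^*,\phi_3^*)$ that solves~\eqref{TWS2} at $s=s_1^*$.

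Next I would verify the boundary conditions and the non-triviality of $\phi_2^*$ and $\phi_3^*$. Since $\alpha(-\infty)<0$, uniform exponentially decaying super-solutions on a left half-line force $\phi^*(-\infty)=(0,0,0)$. The positivity $\alpha(+\infty)=1$, together with the smallness of $b$ in~\eqref{c1} and the $L^\infty$ bounds on $\phi_1^n,\phi_2^n$, yields an $n$-independent positive constant sub-solution for $\phi_3^n$ on a right half-line, giving $\phi_3^*\not\equiv 0$. Because $s_1^*>s_2^*$, the $\phi_2$-characteristic equation $s\lambda=d_2[I_2(\lambda)-1]+r_2(a-1)$ at speed $s=s_n$ has, for $n$ large, two distinct real roots staying bounded away from each other, so a KPP-type sub-solution of the form $\varepsilon(e^{-\mu z}-Ce^{-(\mu+\nu)z})_+$ with $n$-independent constants produces $\phi_2^*\not\equiv 0$. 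The behaviour $\phi^*(+\infty)=E_1$ then follows from exponential upper bounds on $\phi_1^*,\phi_2^*$ at $+\infty$ and a squeezing argument on $\phi_3^*$ around the value $1$.

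The principal difficulty, and where the assumption that $J_1$ has compact support plays its role, is to ensure $\phi_1^*\not\equiv 0$ at the critical speed: at $s=s_1^*$ the $\phi_1$-characteristic equation $s\lambda=d_1[I_1(\lambda)-1]+r_1(a-1)$ has a double root $\lambda_1^*$, so the usual exponential sub-solutions degenerate. Compact support of $J_1$ gives $\hat\lambda_1=+\infty$ and makes $I_1$ entire and $Q_1$ smooth and strictly convex throughout $(0,+\infty)$; consequently the two real roots $\lambda_1^-(s_n)<\lambda_1^*<\lambda_1^+(s_n)$ exist for $s_n$ close to $s_1^*$ and converge to $\lambda_1^*$ as $n\to\infty$. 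My plan is to construct, for $n$ large, a sub-solution for $\phi_1^n$ on a fixed compact interval $[L_0,L_1]$ of the renormalized form
\[
\underline{\phi}_1^n(z)\;=\;\frac{\varepsilon}{\lambda_1^+(s_n)-\lambda_1^-(s_n)}\bigl(e^{-\lambda_1^-(s_n)(z-L_0)}-e^{-\lambda_1^+(s_n)(z-L_0)}\bigr),
\]
with $\varepsilon>0$ small and $n$-independent, so that by L'H\^opital's rule $\underline{\phi}_1^n\to\varepsilon(z-L_0)\,e^{-\lambda_1^*(z-L_0)}$ pointwise as $n\to\infty$. The profile $\underline{\phi}_1^n$ is by construction a solution of the linearization of the $\phi_1$ equation at $E_1$; compact support of $J_1$ controls $\mathcal{N}_1[\underline{\phi}_1^n]$ uniformly in $n$, and choosing $L_0$ large (so that $\phi_3^n$ is close to $1$ and $\phi_2^n$ is small on $[L_0,L_1]$ uniformly in $n$, via $n$-independent upper bounds obtained from $\phi^n(+\infty)=E_1$) makes the nonlinear perturbation $\phi_1^n[a(\phi_3^n-1)-\phi_1^n-k\phi_2^n]$ dominated by the favourable linear term, so that $\underline{\phi}_1^n$ is a genuine sub-solution of the $\phi_1^n$ equation on $[L_0,L_1]$. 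A comparison argument at the endpoints $z=L_0,L_1$, combined with the $n$-independent asymptotics of $\phi_1^n$ near $+\infty$, then gives $\phi_1^n\ge\underline{\phi}_1^n$ on $[L_0,L_1]$, and passing to the limit yields $\phi_1^*(z)\ge\varepsilon(z-L_0)\,e^{-\lambda_1^*(z-L_0)}>0$ on $(L_0,L_1]$, completing the proof.
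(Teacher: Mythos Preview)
Your overall strategy---approximating $s_1^*$ from above by supercritical speeds and passing to the limit---is genuinely different from the paper's. The paper works \emph{directly} at the critical speed $s=s_1^*$: it constructs explicit upper and lower solutions of the form $(a-1)Bze^{-\lambda_1^*z}$ and $(a-1)Bze^{-\lambda_1^*z}-p\,z^{1/2}e^{-\lambda_1^*z}$ for $\phi_1$ (the standard ``critical'' polynomial--exponential ansatz), keeps the supercritical exponential profiles for $\phi_2$ (available since $s_1^*>s_2^*$), and applies the fixed-point Lemma~\ref{luslem} once. The compact support of $J_1$ is used there to control $\mathcal{N}_1[\overline{\phi}_1]$ and $\mathcal{N}_1[\underline{\phi}_1]$ on the half-line where these profiles are given by the polynomial--exponential expressions.

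Your proposal, however, has a genuine gap precisely at the step you flag as the principal difficulty: the inequality $\phi_1^n\ge\underline{\phi}_1^n$. You invoke ``a comparison argument at the endpoints $z=L_0,L_1$'', but no such principle is available here. The equation for $\phi_1$ is nonlocal, so any interval comparison would require control on the full $J_1$-neighbourhood of $[L_0,L_1]$, not just at two points. More importantly, the solutions $\phi^n$ you import from Theorem~\ref{th:e1} are produced by the Schauder fixed-point Lemma~\ref{luslem}, which is an \emph{existence} device: it guarantees only that $\phi_1^n$ lies between the specific pair $\underline{\phi}_1,\overline{\phi}_1$ of~\eqref{deful}, and says nothing about how $\phi_1^n$ compares with any other sub-solution. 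Since the lower barrier in~\eqref{deful} degenerates to zero as $s_n\downarrow s_1^*$ (the constant $p_1$ in~\eqref{q1} blows up because $-g_1(\mu_1)\to0$), nothing you have established prevents $\phi_1^n\to0$ locally uniformly. Nor can you simply feed your renormalized $\underline{\phi}_1^n$ into Lemma~\ref{luslem} in place of~\eqref{deful}: for large $z-L_0$ one has $\underline{\phi}_1^n(z)\sim \varepsilon\,\Delta_n^{-1}e^{-\lambda_1^-(s_n)(z-L_0)}$, which eventually exceeds the upper barrier $(a-1)e^{-\lambda_1^-(s_n)z}$ once $\Delta_n:=\lambda_1^+(s_n)-\lambda_1^-(s_n)$ is small, violating the ordering $\underline{\phi}_1\le\overline{\phi}_1$ required in Definition~\ref{lus}.

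The honest repair---building a lower solution for $\phi_1$ that remains below a compatible upper solution \emph{uniformly in $n$}---forces one to polynomial--exponential profiles of the type $Bze^{-\lambda z}$, at which point the approximation is superfluous and one has essentially rebuilt the paper's direct construction. (Two minor slips: the bound from the proof of Theorem~\ref{th:e1} is $\phi_1^n,\phi_2^n\le a-1$, not $u_p$; and Proposition~\ref{prop:plus} already gives $\phi^*(-\infty)=(0,0,0)$ for free, so no separate left-tail argument is needed.)
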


Note that, from the comments after (J1)-(J3), the condition $s^*_1>s^*_2$ is fulfilled if $d_1\ge d_2$, $r_1\ge r_2$, $(d_1,r_1)\neq(d_2,r_2)$, and $J_1=J_2$ in $\R$. A similar result to Theorem~\ref{th:c12} also holds if $s^*_2>s^*_1$ and if $J_2$ has a compact support (namely, in that case, a forced wave connecting the trivial state and $E_1$ with the critical speed $s=s^*_2$ exists).

Lastly, the next theorem shows, under some conditions, the existence of forced waves connecting the trivial state to the mixed state $E_2=(u_p,0,w_p)$, for the minimal possible shifting speed.

\begin{theorem}\label{th:c2}
In addition to~\eqref{c1} and~\eqref{alpha1}, suppose that $d_1=d_2=d_3$, that $J:=J_1=J_2=J_3$ in $\R$, and that $J$ has a compact support. We also assume that~\eqref{re2} holds and that $\rho\ge\lambda^{**}_2$, where $\rho>0$ is as in~$($$\alpha 2$$)$ and $\lambda^{**}_2>0$ denotes the unique minimum of $R_2$ in $(0,\hat\lambda_2)=(0,+\infty)$. Then a positive solution of~\eqref{TWS2} with~\eqref{bc-2} exists for $s=s_2^{**}$.
\end{theorem}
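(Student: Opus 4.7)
The plan is to realize the critical-speed wave as a limit of the supercritical waves supplied by Theorem~\ref{th:e2}. Pick a sequence $s_n\downarrow s_2^{**}$. Since $R_2$ is strictly decreasing on $(0,\lambda_2^{**}]$ with $R_2(\lambda_2^{**})=s_2^{**}$, and since $\rho\ge\lambda_2^{**}$, for each $n$ large enough there is a unique $\rho_n\in(0,\lambda_2^{**})$ with $R_2(\rho_n)=s_n$, and $\rho_n\uparrow\lambda_2^{**}$. Because $\rho_n\le\rho$, condition~($\alpha$2) still holds with $\rho_n$ in place of $\rho$ (with the same constant $C$). Under the standing hypotheses $d_1=d_2=d_3$, $J_1=J_2=J_3$, and~\eqref{re2}, all assumptions of Theorem~\ref{th:e2} are met for the pair $(s_n,\rho_n)$; thus for every $n$ we obtain a positive solution $\Phi_n=(\phi_1^n,\phi_2^n,\phi_3^n)$ of~\eqref{TWS2} with $s=s_n$ that connects $(0,0,0)$ to $E_2$.

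Next, I would carry out a compactness argument. The upper and lower envelopes produced in the proof of Theorem~\ref{th:e2}, namely profiles of the form $Ce^{-\rho_n z}$ for $\phi_2^n$ on the $+\infty$ side together with envelopes stemming from the linearisation at $0$ on the $-\infty$ side, depend continuously on $(s_n,\rho_n)$ and therefore are uniform in $n$ as $\rho_n\to\lambda_2^{**}$ and $s_n\to s_2^{**}$. In particular, each $\phi_i^n$ enjoys a uniform $L^\infty$ bound and uniform exponential upper bounds at both infinities. Equation~\eqref{TWS2}, together with the compact support of $J$ and these uniform bounds, gives a uniform bound on $(\phi_i^n)'$. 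Arzel\`a--Ascoli and a diagonal extraction then yield a subsequence converging locally uniformly to some bounded nonnegative $\Phi=(\phi_1,\phi_2,\phi_3)\in C(\R;\R^3)$. Dominated convergence in the convolution terms shows that $\Phi$ is a $C^1$ nonnegative bounded solution of~\eqref{TWS2} with $s=s_2^{**}$, and the uniform exponential decay at $-\infty$ of the envelopes forces $\Phi(-\infty)=(0,0,0)$.

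The delicate step is to show $\Phi\not\equiv0$ and $\Phi(+\infty)=E_2$. Since~\eqref{TWS2} is not translation invariant, the $\Phi_n$ cannot be shifted, but their transitions are anchored by the heterogeneity of $\alpha$. On a sufficiently large interval $[A,B]$ chosen so that $\alpha(z)\ge\alpha_0>0$ there, I would build a compactly supported positive subsolution for the prey equation via an auxiliary scalar Fisher--KPP type argument, with amplitude and support independent of $n$; comparison would then force $\phi_3^n(z_0)\ge\delta>0$ at some fixed $z_0\in[A,B]$ for all large $n$, whence $\phi_3(z_0)\ge\delta>0$. A nonlocal strong maximum principle, based on the positivity of $J$ near $y^\pm_i$ from~(J1), combined with the structure of the reaction terms, propagates this positivity of $\phi_3$ throughout $\R$ and to the other two components. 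To identify $\Phi(+\infty)=E_2$, the uniform estimate $\phi_2^n(z)\le Ce^{-\rho_n z}$ passes in the limit to $\phi_2(+\infty)=0$, while uniform lower solutions for $\phi_1^n$ and $\phi_3^n$ far ahead of the front, supported by the favourable environment, yield $\phi_1(+\infty)=u_p$ and $\phi_3(+\infty)=w_p$.

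The chief obstacle is nontriviality in the presence of the critical-speed degeneracy: at $s=s_2^{**}$ the exponent $\lambda_2^{**}$ is a double root of $R_2(\lambda)=s$, so the supercritical upper envelopes $Ce^{-\rho_n z}$ lose their strictness as $\rho_n\uparrow\lambda_2^{**}$, and the borderline profile $e^{-\lambda_2^{**}z}$ merely satisfies the linearised equation with equality; one may need to promote it, uniformly in $n$, to a critical envelope of the form $(A+Bz)\,e^{-\lambda_2^{**}z}$ in order to preserve the upper bound without deteriorating $\phi_2^n$. The combined assumptions that $d_1=d_2=d_3$, $J_1=J_2=J_3$, that $J$ has compact support, and that $\rho\ge\lambda_2^{**}$ are precisely what is needed to keep the envelopes uniform as $n\to\infty$; the localised lower solution anchored in $\{\alpha>0\}$ is the crucial extra ingredient that rules out $\Phi\equiv0$.
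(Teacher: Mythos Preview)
Your approach differs from the paper's: the paper does \emph{not} pass to a limit from supercritical waves. Instead, it builds explicit upper and lower solutions directly at $s=s_2^{**}$, using the critical profiles $Bze^{-\lambda_2^{**}z}$ (for $\overline\phi_1,\overline\phi_2,\overline\phi_3,\underline\phi_1,\underline\phi_3$) and a correction $q^*z^{1/2}e^{-\lambda_2^{**}z}$ for $\underline\phi_2$, then applies the Schauder fixed-point Lemma~\ref{luslem}. The compact support of $J$ is used so that convolutions against the polynomially modulated exponentials $z e^{-\lambda_2^{**}z}$ and $z^{1/2}e^{-\lambda_2^{**}z}$ make sense and so that the Taylor-type inequality $z^{1/2}-(z-y)^{1/2}-\tfrac{y}{2z^{1/2}}\ge \tfrac{y^2}{8(z+S)^{3/2}}$ can be exploited on $[-S,S]$.

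Your limiting scheme has a genuine gap at precisely the point you flag as ``the chief obstacle'': the positivity of $\phi_2$ does not survive the limit. In the proof of Theorem~\ref{th:e2} (Section~5), the lower solution for $\phi_2$ is $\underline\phi_2(z)=\max\{(a-1)e^{-\lambda_5 z}-qe^{-\nu z},0\}$ with $\nu\in(\lambda_5,\lambda_6)$ and $q$ proportional to $1/(-G_2(\nu))$. As $s_n\downarrow s_2^{**}$, the two roots $\lambda_5,\lambda_6$ coalesce at $\lambda_2^{**}$, so $-G_2(\nu)\to0$, $q\to\infty$, and the zero $z_1=(\nu-\lambda_5)^{-1}\ln\frac{q}{a-1}\to+\infty$; at every fixed point the lower barrier for $\phi_2^n$ becomes $0$. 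Your suggestion to anchor positivity via a subsolution for the prey equation only yields $\phi_3(z_0)\ge\delta>0$, and the strong maximum principle you invoke is componentwise: it shows that a nontrivial component is positive everywhere, but it cannot create positivity of $\phi_2$ from positivity of $\phi_3$. A limit with $\phi_2\equiv0$ is perfectly consistent with a nontrivial $(\phi_1,\phi_3)$ wave, and would not be the positive three-component solution the theorem requires.

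The remedy you gesture at---replacing $e^{-\rho_n z}$ by envelopes of the form $(A+Bz)e^{-\lambda_2^{**}z}$ uniformly in $n$---is exactly the paper's construction, and once you have verified those envelopes are upper/lower solutions you can (and the paper does) apply Lemma~\ref{luslem} directly at $s=s_2^{**}$; the passage through $s_n$ becomes unnecessary. In short, the compactness half of your plan is fine, but the nontriviality half is not: without the $z e^{-\lambda_2^{**}z}$ and $z^{1/2}e^{-\lambda_2^{**}z}$ barriers for $\phi_2$ (whose verification is the actual work in Section~6.3), the limiting argument cannot rule out $\phi_2\equiv0$.
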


By exchanging the roles of $u$ and $v$, a similar theorem to Theorem~\ref{th:c2} on forced waves connecting the trivial state and $E_3$ with critical speed can be derived. We omit it here.

Predator-prey systems such as~\eqref{pp} are neither cooperative nor competitive, and the maximum principle does not hold for these systems. Furthermore, due to the heterogeneity of the function $\alpha$, the systems~\eqref{pp} and~\eqref{TWS2} are not invariant by translation with respect to the spatial variable, and new difficulties then arise in comparison with the existence of traveling waves for homogeneous predator-prey systems (we refer to, e.g.,~\cite{cgg21,cg21,cgy17,g21,gnow20} for the existence of waves for various homogeneous predator-prey systems with local diffusion, and to~\cite{dggs21,dgm19,dglp19} for the study of spreading speeds for such homogeneous systems with local or nonlocal dispersal).

With respect to the paper~\cite{choi21} on systems with both classical diffusion and nonlocal dispersal, we introduce in the present paper, in order to remove the assumption on compact support, some new ideas based on the Schauder fixed-point theorem, the suitable definition of upper and lower solutions, and translation strategies described in Section~2. The existence of forced waves connecting co-existence or predator-free states to the trivial state then relies on the construction of suitable upper and lower solutions, which is quite intricate for this three-species system.

\vskip 3mm
\noindent{\bf{Organization of the paper.}} Section~2 presents a general framework for the existence of waves, based on fixed point methods and translation strategies. Theorems~\ref{th:e4}-\ref{th:e2} on the existence of super-critical waves connecting $(0,0,0)$ and $E_4$, $E_1$ or $E_2$ are proved in Sections~3-5. Section~6 is devoted to the proof of Theorems~\ref{th:c1}-\ref{th:c2} on the existence of forced waves with critical speeds.


\section{Preliminaries}
\setcounter{equation}{0}

First, we introduce the {notion of} generalized upper-lower solutions of \eqref{TWS2}.

\begin{definition}\label{lus}
Continuous functions $(\overline{\phi}_1,\overline{\phi}_2,\overline{\phi}_3)$ and $(\underline{\phi}_1,\underline{\phi}_2,\underline{\phi}_3)$
are called a pair of upper and lower solutions of \eqref{TWS2} if $\overline{\phi_i}\ge\underline{\phi_i}$ in $\R$, $i=1,2,3$, and the following inequalities
\begin{eqnarray}
&\mathcal{U}_1(z):=d_{1} \bN_1[\overline{\phi}_1](z)+s\overline{\phi}_1^{\prime}(z)+r_{1}\overline{\phi}_1(z)\,[-1-\overline{\phi}_1(z)-k\underline{\phi}_2(z)+a\overline{\phi}_3(z)] & \!\!\!\le 0,  \label{u1} \\
&\mU_2(z):=d_{2} \bN_2[\overline{\phi}_2](z)+s\overline{\phi}_2^{\prime}(z)+r_{2}\overline{\phi}_2(z)\,[-1-h\underline{\phi}_1(z)-\overline{\phi}_2(z)+a\overline{\phi}_3(z)] & \!\!\!\le 0,  \label{u2} \\
&\mU_3(z):=d_{3} \bN_3[\overline{\phi}_3](z)+s\overline{\phi}_3^{\prime}(z)+r_{3}\overline{\phi}_3(z)\,[\alpha(z)-b\underline{\phi}_1(z)-b\underline{\phi}_2(z)-\overline{\phi}_3(z)] & \!\!\!\le 0,  \label{u3} \\
&\mL_1(z):= d_{1} \bN_1[\underline{\phi}_1](z)+s\underline{\phi}_1^{\prime}(z)+r_{1}\underline{\phi}_1(z)\,[-1-\underline{\phi}_1(z)-k\overline{\phi}_2(z)+a\underline{\phi}_3(z)] & \!\!\!\ge 0,  \label{l1} \\
&\mL_2(z):= d_{2} \bN_2[\underline{\phi}_2](z)+s\underline{\phi}_2^{\prime}(z)+r_{2}\underline{\phi}_2(z)\,[-1-h\overline{\phi}_1(z)-\underline{\phi}_2(z)+a\underline{\phi}_3(z)] & \!\!\!\ge 0,  \label{l2} \\
&\mL_3(z):= d_{3} \bN_3[\underline{\phi}_3](z)+s\underline{\phi}_3^{\prime}(z)+r_{3}\underline{\phi}_3(z)\,[\alpha(z)-b\overline{\phi}_1(z)-b\overline{\phi}_2(z)-\underline{\phi}_3(z)] & \!\!\!\ge 0  \label{l3}
\end{eqnarray}
hold for all $z \in \mathbb{R}\setminus E$ for some finite subset $E$ $($possibly empty$)$ of $\mathbb{R}$ $($the functions~$\underline{\phi}_i$ and~$\overline{\phi}_i$ are assumed to be of class $C^1$ in, at least, $\R\setminus E$$)$.
\end{definition}

Next, from the Schauder fixed-point theorem, we can derive the existence of wave profiles as follows. Note that our proof below actually only requires the nonnegativity and integrability of the kernels $J_i$.

\begin{lemma}\label{luslem}
Let $s>0$ be given. Let $(\overline{\phi}_1,\overline{\phi}_2,\overline{\phi}_3)$ and $(\underline{\phi}_1,\underline{\phi}_2,\underline{\phi}_3)$ be a pair of nonnegative bounded upper and lower solutions of \eqref{TWS2}. Then \eqref{TWS2} admits a solution $(\phi_1,\phi_2,\phi_3)$ such that $\underline{\phi}_i\le \phi_i\le \overline{\phi}_i$ in $\R$ for all $i=1,2,3$.
\end{lemma}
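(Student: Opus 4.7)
My strategy is to recast \eqref{TWS2} as a fixed-point equation for an integral operator on the order interval
$$\Gamma := \big\{(\phi_1,\phi_2,\phi_3)\in C(\R,\R^3) : \underline{\phi}_i\le\phi_i\le\overline{\phi}_i \text{ on }\R,\ i=1,2,3\big\},$$
equipped with the topology of locally uniform convergence, and to invoke Schauder's theorem. Let $M_i:=\sup_\R\overline{\phi}_i<+\infty$. I will pick $\beta>0$ large enough — depending only on $d_i,r_i,M_i,a,b,h,k$ — so that, for each $i\in\{1,2,3\}$, the map $\phi_i\mapsto(\beta-d_i)\phi_i+r_i\phi_i f_i(\phi,z)$ is non-decreasing on $[0,M_i]$, where $f_1:=-1-\phi_1-k\phi_2+a\phi_3$, $f_2:=-1-h\phi_1-\phi_2+a\phi_3$ and $f_3:=\alpha(z)-b\phi_1-b\phi_2-\phi_3$. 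Put
$$H_i(\phi)(z):=(\beta-d_i)\phi_i(z)+d_i(J_i*\phi_i)(z)+r_i\phi_i(z)f_i(\phi,z);$$
then \eqref{TWS2} is equivalent to $-s\phi_i'+\beta\phi_i=H_i(\phi)$, and the unique bounded solution of this linear first-order ODE defines
$$T_i(\phi)(z):=\frac{1}{s}\int_z^{+\infty}e^{-\beta(y-z)/s}H_i(\phi)(y)\,dy,\qquad i=1,2,3.$$

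\textbf{Invariance of $\Gamma$.} The heart of the argument is showing $T(\Gamma)\subset\Gamma$. Because $J_i\ge 0$ and $\beta$ is large, each $H_i$ is non-decreasing in its $i$-th argument (both pointwise and through the convolution), non-decreasing in the ``food'' variable ($\phi_3$ for $H_1,H_2$), and non-increasing in each ``competitor/predator'' variable ($\phi_2$ for $H_1$, $\phi_1$ for $H_2$, and both $\phi_1,\phi_2$ for $H_3$). Consequently, for every $\phi\in\Gamma$,
$$H_1(\phi)(y)\le H_1(\overline{\phi}_1,\underline{\phi}_2,\overline{\phi}_3)(y)\quad\text{on }\R,$$
with analogous inequalities for $H_2,H_3$. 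Combining this with \eqref{u1} yields
$$-s\overline{\phi}_1'(y)+\beta\overline{\phi}_1(y)\ge H_1(\overline{\phi}_1,\underline{\phi}_2,\overline{\phi}_3)(y)\quad\text{on }\R\setminus E.$$
I then multiply by $s^{-1}e^{-\beta(y-z)/s}$, integrate over $(z,+\infty)$, and integrate by parts the $\overline{\phi}_1'$-term on each subinterval cut by $E$. Since $\overline{\phi}_1$ is continuous on all of $\R$, the boundary contributions at the finitely many points of $E$ telescope, and the standard identity
$$\frac{1}{s}\int_z^{+\infty}e^{-\beta(y-z)/s}\bigl(-s\overline{\phi}_1'(y)+\beta\overline{\phi}_1(y)\bigr)dy=\overline{\phi}_1(z)$$
remains valid. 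This delivers $T_1(\phi)(z)\le\overline{\phi}_1(z)$. The reverse bound $T_1(\phi)(z)\ge\underline{\phi}_1(z)$ follows symmetrically from \eqref{l1}, and the other two components are handled in the same way.

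\textbf{Continuity, compactness, conclusion.} Continuity of $T$ on $\Gamma$ for locally uniform convergence will follow from dominated convergence together with the continuity of $\phi_i\mapsto J_i*\phi_i$ in this topology. For compactness, differentiating the integral formula gives $sT_i(\phi)'=\beta T_i(\phi)-H_i(\phi)$, which is uniformly bounded on $\R$, uniformly over $\phi\in\Gamma$; hence $T(\Gamma)$ is equicontinuous and precompact by Arzelà--Ascoli. Schauder's theorem then produces $\phi\in\Gamma$ with $T(\phi)=\phi$, which is automatically $C^1$ and solves \eqref{TWS2}. The obstacle I expect to be most delicate is arranging the monotone structure cleanly — finding a single $\beta$ that works for all three equations with the sign conventions dictated by the predator-prey coupling — and validating the integration by parts across the exceptional set $E$; the latter is exactly why Definition~\ref{lus} insists on global continuity of $\overline{\phi}_i$ and $\underline{\phi}_i$, rather than mere piecewise continuity.
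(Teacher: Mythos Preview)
Your proposal is correct and follows essentially the same approach as the paper: rewrite \eqref{TWS2} as an integral fixed-point equation via the operator $T_i(\phi)(z)=s^{-1}\int_z^{+\infty}e^{-\beta(y-z)/s}H_i(\phi)(y)\,dy$ with $\beta$ chosen large enough to make each $H_i$ monotone in the right variables, verify $T(\Gamma)\subset\Gamma$ using the upper/lower solution inequalities (handling the exceptional set $E$ by continuity of the $\overline{\phi}_i,\underline{\phi}_i$), derive equicontinuity from the identity $sT_i(\phi)'=\beta T_i(\phi)-H_i(\phi)$, and conclude by Schauder. The only cosmetic difference is that the paper packages locally uniform convergence as a genuine Banach-space norm via a positive weight $W$ vanishing at $\pm\infty$, whereas you work directly in the Fr\'echet topology; on the bounded order interval $\Gamma$ these are equivalent, and the Tychonoff--Schauder theorem applies in either setting.
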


\begin{proof}
First of all, we denote
$$M:=\max\big\{\|\alpha\|_{L^\infty(\R)},\|\overline{\phi}_1\|_{L^\infty(\R)},\|\overline{\phi}_2\|_{L^\infty(\R)},\|\overline{\phi}_3\|_{L^\infty(\R)}\big\}$$
and we define a positive constant $\beta$ as follows:
\be\label{beta-con}
\beta:=\max\{\sigma_1,\sigma_2,\sigma_3\}>0,
\ee
where
\be\label{beta-con2}
\sigma_1:=d_1 + r_1(2M+kM+1),\ \sigma_2:=d_2+r_2(2M+hM+1),\ \sigma_3:=d_3+r_3M(b+3).
\ee
We also fix an arbitrary positive continuous function $W:\R\to\R$ (a weight function) such that $W(z)\to0$ as $z\to\pm\infty$ and we consider the Banach space
$$X:=\big\{(\phi_1,\phi_2,\phi_3):\phi_i\in C^0(\R)\cap L^\infty(\R),\ i=1,2,3\big\}$$
endowed with the norm
\be\label{normX}
\|(\phi_1,\phi_2,\phi_3)\|_X:=\max\big\{\|\phi_1W\|_{L^\infty(\R)},\|\phi_2W\|_{L^\infty(\R)},\|\phi_2W\|_{L^\infty(\R)}\big\}.
\ee
We then set
$$\Gamma:=\big\{(\phi_1,\phi_2,\phi_3):\phi_i\in C^0(\R),\ 0\le\underline{\phi}_i\le\phi_i\le\overline{\phi}_i\hbox{ in }\R,\ i=1,2,3\big\},$$
which is a non-empty convex closed bounded subset of $(X,\|\cdot\|_X)$.

Then we transform the differential system~\eqref{TWS2} to an integral operator (as in e.g. \cite{dglp19,gnow20}), and we will apply a fixed point theorem in $\Gamma$. To do so, we set, for $\Phi:=(\phi_1,\phi_2,\phi_3)\in\Gamma$,
$$\left\{\baa{ll}
\!F_1[\Phi](y):=\beta\phi_1(y) + d_1\mathcal{N}_1[\phi_1](y)+r_1\phi_1(y)\,[-1-\phi_1(y)-k\phi_2(y)+a\phi_3(y)], & \!y\in\R,\vspace{3pt}\\
\!F_2[\Phi](y):=\beta\phi_2(y) + d_2\mathcal{N}_2[\phi_2](y)+r_2\phi_2(y)\,[-1-h\phi_1(y)-\phi_2(y)+a\phi_3(y)], & \!y\in\R,\vspace{3pt}\\
\!F_3[\Phi](y):=\beta\phi_3(y) + d_3\mathcal{N}_3[\phi_3](y)+r_3\phi_3(y)\,[\alpha(y)-b\phi_1(y)-b\phi_2(y)-\phi_3(y)], & \!y\in\R,\eaa\right.$$
and then
$$P[\Phi]:=(P_1[\Phi],P_2[\Phi],P_3[\Phi]),$$
where
$$P_i[\Phi](z):=\frac{e^{\beta z/s}}{s}\!\int_z^{+\infty}\!\!e^{-\beta y/s}F_i[\Phi](y)\,dy=\frac{1}{s}\!\int_0^{+\infty}\!\!e^{-\beta u/s}F_i[\Phi](z+u)\,du,\ z\in\R,\ i=1,2,3.$$
For each $\Phi\in\Gamma$, the functions $F_i[\Phi]$ are bounded and continuous in $\R$ and then so are the functions $P_i[\Phi]$, for $i=1,2,3$. Furthermore, the functions $P_i[\Phi]$ are actually of class $C^1(\R)$. It is straightforward to check that a fixed point $\Phi=(\phi_1,\phi_2,\phi_3)$ of $P$ in $\Gamma$ is a $C^1(\R)$ solution of~\eqref{TWS2} such that $\underline{\phi}_i\le\phi_i\le\overline{\phi}_i$ in $\R$ for $i=1,2,3$.

To derive a fixed point of $P$ in $\Gamma$, we first show that $P$ maps $\Gamma$ into $\Gamma$. To this aim, we let $\Phi\in\Gamma$. Then, using $\beta\ge\sigma_1$ in~\eqref{beta-con}-\eqref{beta-con2} together with the definition of $\Gamma$, we get that
$$F_1[\Phi](z)\ge F_1[(\underline{\phi}_1,\overline{\phi}_2,\underline{\phi}_3)](z),$$
hence $P_1[\Phi](z)\ge P_1[(\underline{\phi}_1,\overline{\phi}_2,\underline{\phi}_3)](z)$, for all $z\in\R$. On the other hand, by the definition of upper-lower solutions, especially~\eqref{l1} and the finiteness of the set $E$ in Definition~\ref{lus}, we have
$$\baa{rcl}
P_1[(\underline{\phi}_1,\overline{\phi}_2,\underline{\phi}_3)](z) & = & \displaystyle\frac{1}{s}\!\int_z^{+\infty}\!\!e^{\beta(z-y)/s}F_1[(\underline{\phi}_1,\overline{\phi}_2,\underline{\phi}_3)](y)\,dy\vspace{3pt}\\
& = & \displaystyle\mathop{\lim}_{\varepsilon\downarrow0}\int_{[z,+\infty)\setminus\cup_{t\in E}(t-\varepsilon,t+\varepsilon)}e^{\beta(z-y)/s}\frac{F_1[(\underline{\phi}_1,\overline{\phi}_2,\underline{\phi}_3)](y)}{s}\,dy\vspace{3pt}\\
& \ge & \displaystyle\mathop{\liminf}_{\varepsilon\downarrow0}\int_{[z,+\infty)\setminus\cup_{t\in E}(t-\varepsilon,t+\varepsilon)}e^{\beta(z-y)/s}\Big(-\underline{\phi}_1'(y)+\frac{\beta}{s}\underline{\phi}_1(y)\Big)\,dy=\underline{\phi}_1(z)\eaa$$
for all $z\in\R$, since $\underline{\phi}_1$ is continuous in $\R$. Hence, $P_1[\Phi](z)\ge\underline{\phi}_1(z)$ for all $z\in\R$. Similarly, we can derive
$$\left\{\baa{ll}
P_1[\Phi]\le P_1[(\overline{\phi}_1,\underline{\phi}_2,\overline{\phi}_3)]\le\overline{\phi}_1 & \hbox{in }\R,\vspace{3pt}\\
\underline{\phi}_2\le P_2[(\overline{\phi}_1,\underline{\phi}_2,\underline{\phi}_3)]\le P_2[\Phi]\le P_2[(\underline{\phi}_1,\overline{\phi}_2,\overline{\phi}_3)]\le\overline{\phi}_2 & \hbox{in }\R,\vspace{3pt}\\
\underline{\phi}_3\le P_3[(\overline{\phi}_1,\overline{\phi}_2,\underline{\phi}_3)]\le P_3[\Phi]\le P_3[(\underline{\phi}_1,\underline{\phi}_2,\overline{\phi}_3)]\le\overline{\phi}_3 & \hbox{in }\R,\eaa\right.$$
by the choice of $\beta$ in~\eqref{beta-con}-\eqref{beta-con2} and the definition of upper-lower solutions. Hence, we finally obtain that $P(\Gamma)\subset\Gamma$.

Now, we observe that, from the definition of $\Gamma$ and the boundedness of the functions $\underline{\phi}_i$ and $\overline{\phi}_i$ ($i=1,2,3$), there is a positive constant $\overline{M}$ such that $|F_i[\Phi](z)|\le\overline{M}$ for all $\Phi\in\Gamma$, $z\in\R$ and $i=1,2,3$, hence $|P_i[\Phi](z)|\le\overline{M}/\beta$ and $|P_i[\Phi]'(z)|=|\beta P_i[\Phi](z)-F_i[\Phi](z)|/s\le2\overline{M}/s$ for all $\Phi\in\Gamma$, $z\in\R$ and $i=1,2,3$. Therefore, from Arzel\`a-Ascoli theorem (applied in each compact interval $[-m,m]$ with $m\in\mathbb{N}$), together with a diagonal extraction process and the definition ~\eqref{normX} of the norm $\|\cdot\|_X$ in $X$, it follows that each sequence in $P(\Gamma)$ has a converging subsequence in $(X,\|\cdot\|_X)$ (the limit then belongs to $\Gamma$ from the previous paragraph and the closedness of $\Gamma$).

Furthermore, for any $\Phi\in\Gamma$ and any sequence $(\Phi_n)_{n\in\mathbb{N}}$ in $\Gamma$ such that $\|\Phi_n-\Phi\|_X\to0$ as $n\to+\infty$, one has in particular $\Phi_n(z)\to\Phi(z)$ as $n\to+\infty$ in $\R^3$ for every $z\in\R$ and, together with the boundedness of the sequence $(\Phi_n)_{n\in\N}$ in $L^\infty(\R)^3$ and the Lebesgue dominated convergence theorem, one gets that $F_i[\Phi_n](z)\to F_i[\Phi](z)$ as $n\to+\infty$ for every $z\in\N$ and $i=1,2,3$. With the boundedness of the sequence $(F_1[\Phi_n],F_2[\Phi_n],F_3[\Phi_n])_{n\in\N}$ in $L^\infty(\R)^3$, one infers that $P[\Phi_n](z)\to P[\Phi](z)$ as $n\to+\infty$ in $\R^3$ for every $z\in\R$. Therefore, each converging subsequence of the sequence $(P[\Phi_n])_{n\in\N}$ in $(X,\|\cdot \|_X)$ must converge to the unique possible limit $P[\Phi]$, and finally $P[\Phi_n]\to P[\Phi]$ in $X$ as $n\to+\infty$.

We conclude from the previous two paragraphs that $P:\Gamma\to\Gamma$ is completely continuous. It thus follows from the Schauder fixed-point theorem that $P$ has a fixed point  $\Phi=(\phi_1,\phi_2,\phi_3)$ in $\Gamma$, which is then a $C^1(\R)$ solution of~\eqref{TWS2} such that $\underline{\phi}_i\le\phi_i\le\overline{\phi}_i$ in $\R$ for $i=1,2,3$. This completes the proof of the lemma.
\end{proof}

Moreover, due to the negativity of $\alpha(-\infty)$, we have the following universal result for the left-hand tail limit. Since its proof is almost the same as that of \cite[(3.19)]{choi21} (see also~\cite{W}), we omit it here safely.

\begin{proposition}\label{prop:plus}
It holds $(\phi_1,\phi_2,\phi_3)(-\infty)=(0,0,0)$ for any nonnegative bounded solution $(\phi_1,\phi_2,\phi_3)$ of~\eqref{TWS2}.
\end{proposition}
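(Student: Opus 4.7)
The plan is to argue in two stages: first show $\phi_3(z)\to 0$ as $z\to-\infty$ by exploiting the negativity of $\alpha(-\infty)$, and then derive the analogous property for $\phi_1$ and $\phi_2$ using the intrinsic decay rate $-r_i$ in the predator equations. For the prey, I would set $q:=\limsup_{z\to-\infty}\phi_3(z)\in[0,\|\phi_3\|_{L^\infty(\R)}]$ and assume for contradiction that $q>0$. Picking a sequence $z_n\to-\infty$ with $\phi_3(z_n)\to q$, define the shifted profiles $\phi_i^n(z):=\phi_i(z+z_n)$. The uniform boundedness of the $\phi_i$, together with the uniform boundedness of the $\phi_i'$ read off from~\eqref{TWS2} using the $L^\infty$ bounds on $\alpha$ and on the $\phi_i$, yields via Arzel\`a--Ascoli and a diagonal extraction a locally uniform limit $\phi_i^n\to\phi_i^\infty$ on $\R$, with $\phi_i^\infty$ nonnegative and bounded. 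Lebesgue dominated convergence (using only $J_i\in L^1(\R)$) gives the pointwise convergence $\mathcal{N}_i[\phi_i^n](z)\to\mathcal{N}_i[\phi_i^\infty](z)$, and since $\alpha(z+z_n)\to\alpha(-\infty)$ locally uniformly, passage to the limit shows that each $\phi_i^\infty$ is $C^1$ and that
$$-s(\phi_3^\infty)'(z)=d_3\,\mathcal{N}_3[\phi_3^\infty](z)+r_3\phi_3^\infty(z)\bigl[\alpha(-\infty)-b\phi_1^\infty(z)-b\phi_2^\infty(z)-\phi_3^\infty(z)\bigr].$$
By construction $\phi_3^\infty(0)=q$ and $\phi_3^\infty\le q$ on $\R$, so $0$ is a global maximum, yielding $(\phi_3^\infty)'(0)=0$ and $\mathcal{N}_3[\phi_3^\infty](0)\le0$. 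Evaluating the equation at $z=0$ then gives $0$ on the left, whereas the right-hand side is at most $r_3 q\,\alpha(-\infty)<0$, a contradiction. Hence $q=0$, i.e. $\phi_3(-\infty)=0$.

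Given $\phi_3(-\infty)=0$, I would rerun the same shift-and-compactness argument for $\phi_1$ and (symmetrically) for $\phi_2$. Setting $q_1:=\limsup_{z\to-\infty}\phi_1(z)$ and supposing $q_1>0$, I shift along an extremizing sequence $z_n\to-\infty$; because now $\phi_3(z+z_n)\to 0$ locally uniformly, the limit profile $\phi_1^\infty$ solves the reduced equation
$$-s(\phi_1^\infty)'(z)=d_1\,\mathcal{N}_1[\phi_1^\infty](z)+r_1\phi_1^\infty(z)\bigl[-1-\phi_1^\infty(z)-k\phi_2^\infty(z)\bigr],$$
with $\phi_1^\infty(0)=q_1$ and $\phi_1^\infty\le q_1$ on $\R$. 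At $z=0$ the nonlocal term is nonpositive, the derivative vanishes, and the reaction contributes $r_1q_1\bigl[-1-q_1-k\phi_2^\infty(0)\bigr]\le-r_1q_1<0$, which again produces the contradiction $0<0$. Therefore $q_1=0$, and exchanging the roles of $\phi_1$ and $\phi_2$ gives $\phi_2(-\infty)=0$ as well.

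The main obstacle I anticipate is the passage to the limit in the shifted nonlocal system: one must check that the $\phi_i^n$ and $(\phi_i^n)'$ stay uniformly bounded on $\R$ (which follows directly from~\eqref{TWS2} since all reaction terms are controlled by the $L^\infty$ bounds on $\alpha$ and on the $\phi_i$), and that $\mathcal{N}_i[\phi_i^n]$ converges pointwise to $\mathcal{N}_i[\phi_i^\infty]$ with no symmetry or compact-support assumption on $J_i$, so that the limit profiles are genuine $C^1$ solutions of the shifted system to which the global maximum-point argument can be applied. Once this framework is in place, the contradictions at $z=0$ in the two stages are essentially immediate consequences of $\alpha(-\infty)<0$ and of the $-r_i$ decay in the predator equations.
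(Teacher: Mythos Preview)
Your argument is correct. The paper itself omits the proof of this proposition, referring instead to \cite[(3.19)]{choi21} and~\cite{W}; the shift-and-compactness method you outline (extract a limit of the translated profiles via Arzel\`a--Ascoli, pass to the limit in the equation using dominated convergence for the nonlocal term, and reach a contradiction at the global maximum using $\alpha(-\infty)<0$ for $\phi_3$ and then the $-1$ term for $\phi_1,\phi_2$) is precisely the standard approach used in those references, so there is no substantive difference to compare.
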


Note that any nonnegative nontrivial (in the sense that each component is not identically~$0$ in $\R$) solution $(\phi_1,\phi_2,\phi_3)$ of \eqref{TWS2} must be positive (in the sense that $\phi_i>0$ in $\bR$ for $i=1,2,3$), by a property similar to the strong maximum principle: indeed, if $\phi_i(z_0)=0$ for some $i\in\{1,2,3\}$ and $z_0\in\R$, then $\phi'_i(z_0)=0$, and $(J_i*\phi_i)(z_0)=0$ from the equation, hence $\phi_i=0$ in $[z_0-y^\pm_i-\eta_i,z_0-y^\pm_i+\eta_i]$ with $\eta_i>0$ and $-\infty<y^-_i<0<y^+_i<+\infty$ as in~(J1); one would then get that $\phi_i=0$ in $[z_0-ky^-_i-k\eta_i,z_0-ky^-_i+k\eta_i]$ for all $k\in\mathbb{N}$ by an immediate induction; by choosing positive integers $n$ and $k$ such that $z_0+ny^+_i\in[z_0-ky^-_i-k\eta_i,z_0-ky^-_i+k\eta_i]$, it follows that $\phi_i(z_0+ny^+_i)=0$, and then $\phi_i=0$ in $[z_0+(n-1)y^+_i-\eta_i,z_0+(n-1)y^+_i+\eta_i]$ and also $\phi_i=0$ in $[z_0-n\eta_i,z_0+n\eta_i]$ by induction; finally the non-empty set $\{z\in\R:\phi
 _i(z)=0\}$ would be both open and closed (by continuity of $\phi_i$), hence $\phi_i\equiv0$ in $\R$, a contradiction.

It is interesting to note from the boundedness of $\alpha$ (since it is continuous and has finite limits at $\pm\infty$) and~($\alpha2$) that $\alpha(+\infty)-\alpha(z)\le Ce^{-\rho z}$ for all $z\in\bR$, if we choose the constant~$C$ larger. Then for any positive constant $A$ it holds
$$\alpha(z+A)\ge\alpha(+\infty)-\ep e^{-\rho z}\hbox{ for all }z\in\bR,\hbox{ with }\ep:=Ce^{-\rho A}.$$
Observe that $\ep>0$ can be made as small as we want, if we choose $A$ large enough. We further note that $\phitri(\cdot+A)$ is a solution of~\eqref{TWS2} if and only if $\phitri$ is a solution of~\eqref{TWS2} with $\alpha$ replaced by $\alpha(\cdot+A)$. Therefore, in the sequel, (up to a translation) condition~($\alpha$2) can be rephrased without loss of generality as
\be\label{a2}
\alpha(z)\ge\alpha(+\infty)-\ep e^{-\rho z}\hbox{ for all }z\in\bR,
\ee
for a suitable choice of $\ep>0$ as small as we need.

Now, we consider the following scalar equation
\be\label{scalar}
-s\phi'(z)=d\bN[\phi](z)+r\phi(z)\,[\alpha(z)-\phi(z)],\; z\in\bR,
\ee
where parameters $s,d,r$ are positive constants, and
$$\bN[\phi](z):=\int_\bR J(y)\phi(z-y)dy-\phi(z)$$
in which the kernel $J$ is assumed to satisfy (J1)-(J3) with some constants $\eta>0$, $-\infty<y^-<0<y^+<+\infty$ in~(J1), and $-\infty\le\tilde\lambda<0<\hat{\ld}\le+\infty$ in~(J3), and the continuous heterogeneous function $\alpha$ is assumed to satisfy ($\alpha 1$)-($\alpha 2$), but its limit at $+\infty$ is not assumed to be equal to $1$ in general, that is,~\eqref{alpha1} is not assumed here. Recall from \cite[Theorem~4.5]{lwz18} that a positive nondecreasing solution $\phi$ of \eqref{scalar} satisfying
\be\label{1d-bc}
\phi(-\infty)=0\ \hbox{ and }\ \phi(+\infty)=\alpha(+\infty),
\ee
is known to exist when $J$ is assumed to be even and compactly supported and when $\alpha$ is assumed to be nondecreasing. Without the symmetry of $J$ and the assumption of compact support of $J$, and without the monotonicity of $\alpha$, we still get the existence of a forced wave for~\eqref{scalar}-\eqref{1d-bc} (which may nevertheless not be monotone in general):

\begin{proposition}\label{prop:key}
Under assumptions {\rm (J1)-(J3)} on $J$, with some constants $\eta>0$, $-\infty<y^-<0<y^+<+\infty$ in~{\rm{(J1)}}, and $-\infty\le\tilde\lambda<0<\hat{\ld}\le+\infty$ in~{\rm{(J3)}}, and under conditions~{\rm ($\alpha$1)-($\alpha$2)} on $\alpha$, problem~\eqref{scalar}-\eqref{1d-bc} with $s>0$ admits a positive $C^1(\R)$ solution $\phi$ such that $\alpha(+\infty)\ge\phi(z)\ge\alpha(+\infty)-Be^{-\ld_0 z}$ for all $z\in\R$, for some $\ld_0>0$ and $B>0$.
\end{proposition}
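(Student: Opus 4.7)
The plan is to produce the wave by the Schauder fixed-point technique of Lemma~\ref{luslem} applied to the scalar equation~\eqref{scalar}, with a carefully chosen pair of ordered continuous super- and sub-solutions $(\bar\phi,\underline\phi)$; the exponential lower bound will then be built into $\underline\phi$ itself. Using the translation remark made just before the proposition, I may and do assume that~($\alpha2$) takes the sharper form~\eqref{a2} with $\ep>0$ as small as needed, since a translation on $z$ only alters the constant $B$ in the final statement.

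For the upper solution I would take the constant $\bar\phi\equiv\alpha(+\infty)$; from $\alpha(z)\le\alpha(+\infty)$ and $\int_{\R}J=1$, one has
$$d\mathcal{N}[\bar\phi](z)+s\bar\phi'(z)+r\bar\phi(z)\bigl[\alpha(z)-\bar\phi(z)\bigr]=r\alpha(+\infty)\bigl[\alpha(z)-\alpha(+\infty)\bigr]\le 0.$$
For the lower solution I would take
$$\underline\phi(z):=\max\bigl\{0,\,\alpha(+\infty)-Be^{-\lambda_0 z}\bigr\},$$
with $\lambda_0>0$ and $B>\alpha(+\infty)$ to be chosen. On the trivial piece $\{z\le z_0\}$, where $z_0:=\lambda_0^{-1}\log(B/\alpha(+\infty))>0$, the subsolution inequality reduces to $d\mathcal{N}[\underline\phi](z)=d(J\ast\underline\phi)(z)\ge0$. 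On the non-trivial piece $\{z>z_0\}$, using $\underline\phi(z-y)\ge\alpha(+\infty)-Be^{-\lambda_0(z-y)}$ for every $y\in\R$, a direct calculation yields
$$d\mathcal{N}[\underline\phi](z)+s\underline\phi'(z)\ge Be^{-\lambda_0 z}\,\chi(\lambda_0),\qquad\chi(\lambda):=s\lambda-d\bigl(I(\lambda)-1\bigr),$$
while, since $\underline\phi\ge 0$ and $\alpha(z)-\underline\phi(z)=Be^{-\lambda_0 z}+(\alpha(z)-\alpha(+\infty))\ge Be^{-\lambda_0 z}-\ep e^{-\rho z}$ from~\eqref{a2}, the reaction contribution is at least $r\underline\phi(z)\bigl[Be^{-\lambda_0 z}-\ep e^{-\rho z}\bigr]$.

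Since $\chi(0)=0$ and $\chi'(0)=s>0$ (because $I(0)=1$ and $I'(0)=0$), I would fix $\lambda_0\in(0,\min\{\rho,\hat\lambda\})$ small enough that $\chi(\lambda_0)>0$, and then pick $B>\max\{\alpha(+\infty),\ep\}$. The constraints $\lambda_0\le\rho$, $B\ge\ep$, and $z_0>0$ together force $Be^{-\lambda_0 z}\ge\ep e^{-\rho z}$ on the non-trivial region $\{z>z_0\}\subset(0,+\infty)$, making the reaction bound nonnegative and giving $\mathcal{L}[\underline\phi](z)\ge\chi(\lambda_0)Be^{-\lambda_0 z}\ge0$ there. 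The single kink at $z=z_0$ is absorbed into the finite exceptional set $E$ in Definition~\ref{lus}.

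With $(\bar\phi,\underline\phi)$ in hand, the scalar analogue of the Schauder fixed-point construction of Lemma~\ref{luslem} (whose proof carries over verbatim, since it uses only nonnegativity and integrability of the kernel) yields a $C^1(\R)$ solution $\phi$ of~\eqref{scalar} with $\underline\phi\le\phi\le\alpha(+\infty)$ on $\R$. The bound $\phi(z)\ge\alpha(+\infty)-Be^{-\lambda_0 z}$ and the limit $\phi(+\infty)=\alpha(+\infty)$ are then immediate by the squeeze; positivity of $\phi$ follows from the strong-maximum-principle-type argument given just after Proposition~\ref{prop:plus}; and $\phi(-\infty)=0$ follows from the scalar analogue of Proposition~\ref{prop:plus}, which rests on $\alpha(-\infty)<0$ as in \cite[(3.19)]{choi21}. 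The main obstacle is the calibration of $\underline\phi$: one must secure simultaneously $\chi(\lambda_0)>0$, $\lambda_0\le\rho$ so that the exponential envelope dominates the perturbation in~\eqref{a2}, and a compatible value of $B$; the freedom to choose $\ep$ small via the translation in~\eqref{a2} provides exactly the slack needed to close this balance.
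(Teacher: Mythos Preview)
Your proposal is correct and follows essentially the same route as the paper: constant super-solution $\bar\phi\equiv\alpha(+\infty)$, exponential sub-solution $\underline\phi(z)=\max\{0,\alpha(+\infty)-Be^{-\lambda_0 z}\}$ with $\lambda_0\in(0,\min\{\rho,\hat\lambda\})$ chosen so that $s\lambda_0-d(I(\lambda_0)-1)>0$, then the scalar Schauder argument, strong maximum principle, and Proposition~\ref{prop:plus}. The only cosmetic difference is that the paper takes $B=1$ in the sub-solution and recovers a general $B$ at the end from boundedness of $\phi$, balancing the small negative reaction remainder $-r\ep\alpha(+\infty)e^{-\rho z}$ against the strict negativity of $g(\lambda_0)$ via~\eqref{choiceeps}; you instead build the free constant $B\ge\ep$ into $\underline\phi$ so that the reaction contribution is itself nonnegative on $\{z>z_0\}$, which slightly streamlines the estimate but changes nothing structurally.
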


\begin{proof} First, we give a pair of super-sub-solutions to \eqref{scalar} as follows. It is clear that the constant function $\ophi(z)\equiv\alpha(+\infty)$ is a super-solution of \eqref{scalar}, in the sense that
$$d\bN[\ophi](z)+s\ophi'(z)+r\ophi(z)\,[\alpha(z)-\ophi(z)]\le0\ \hbox{ for all $z\in\R$}.$$
For the sub-solution, we consider the nonnegative bounded function
$$\uphi(z):=\max\{\alpha(+\infty)-e^{-\ld_0 z},0\}$$
for some small positive constant $\ld_0\in(0,\rho)$, where $\rho>0$ is the positive constant appearing in~($\alpha 2$). To choose $\ld_0$, we let
$$g(\ld):=d[I(\ld)-1]-s\ld,\ \ I(\ld):=\int_\bR J(y)e^{\ld y}dy,\ \ \lambda\in[0,\hat\lambda).$$
Note that $g(0)=0$ and $g'(0)=-s$, since
$$g'(\ld)=d\int_\bR yJ(y)e^{\ld y}dy - s\;\mbox{ and }\, \int_\bR yJ(y)dy=0$$
due to the assumptions~(J1)-(J3). Hence there is a $\ld_0\in(0,\min\{\rho,\hat\lambda\})$ such that $g(\ld_0)<0$. Now, we choose $\ep>0$ small enough such that
\be\label{choiceeps}
g(\ld_0)+r\ep\alpha(+\infty)^{\rho/\lambda_0}=g(\ld_0)+r\ep e^{(\rho/\lambda_0)\ln\alpha(+\infty)}<0
\ee
and \eqref{a2} holds for this $\ep$.
Then for $z>-\lambda_0^{-1}\ln\alpha(+\infty)=:z_0$ we have $\uphi(z)=\alpha(+\infty)-e^{-\lambda_0z}>0$ and, since $\uphi(x)\ge\alpha(+\infty)-e^{-\lambda_0x}$ for all $x\in\R$, we get that
\beaa
&\!\!\!\!&d\bN[\uphi](z)+s\uphi'(z)+r\uphi(z)[\alpha(z)-\uphi(z)]\\
& \!\!\ge\!\! & d\int_{\R}J(y)(\alpha(+\infty)\!-\!e^{-\lambda_0(z-y)})dy-d(\alpha(+\infty)\!-\!e^{-\lambda_0z})+s\lambda_0e^{-\lambda_0z}+r\uphi(z)[\alpha(z)\!-\!\uphi(z)]\\
&\!\!=\!\!&-e^{-\ld_0 z}\{d [I(\ld_0)-1]-s\ld_0\}+r\,(\alpha(+\infty)-e^{-\ld_0 z})\,(\alpha(z)-\alpha(+\infty)+e^{-\ld_0 z})\\
&\!\!\ge\!\!&-e^{-\ld_0 z}\{d [I(\ld_0)-1]-s\ld_0\}-r\ep(\alpha(+\infty)-e^{-\ld_0 z})e^{-\rho z}\\
&\!\!\ge\!\!&-e^{-\ld_0 z}\{d [I(\ld_0)\!-\!1]\!-\!s\ld_0\!+\!r\ep\alpha(+\infty)e^{-(\rho-\ld_0)z}\}\!\ge\!-e^{-\ld_0 z}[g(\ld_0)\!+\!r\ep e^{(\rho/\lambda_0)\ln\alpha(+\infty)}]\!>\!0,
\eeaa
using~\eqref{a2} and~\eqref{choiceeps}, together with $\ld_0<\rho$ and $z>z_0=-\lambda_0^{-1}\ln\alpha(+\infty)$. Furthermore, for $z<z_0$, we have $\uphi=0$ in a neighborhood of $z$ and $d\bN[\uphi](z)+s\uphi'(z)+r\uphi(z)[\alpha(z)-\uphi(z)]=d(J*\uphi)(z)\ge0$. Hence $\uphi$ is a sub-solution of \eqref{scalar}, in a sense similar to Definition~\ref{lus}, that is, it is continuous in $\R$, of class $C^1$ in $\R\setminus\{z_0\}$, and it satisfies $d\bN[\uphi](z)+s\uphi'(z)+r\uphi(z)[\alpha(z)-\uphi(z)]\ge0$ for all $z\in\R\setminus\{z_0\}$.

Next, since $0\le\underline{\phi}\le\overline{\phi}=\alpha(+\infty)$, it follows with the same arguments as in the proof of Lemma~\ref{luslem} that there is a $C^1(\R)$ solution $\phi$ of~\eqref{scalar} such that $0\le\uphi\le\phi\le\ophi=\alpha(+\infty)$ in $\bR$. Hence, $\phi(+\infty)=\alpha(+\infty)$, since $\uphi(+\infty)=\ophi(+\infty)=\alpha(+\infty)$. Furthermore, from the definition of $\uphi$ and the boundedness of $\phi$, there is $B\ge1$ such that $\phi(z)\ge\alpha(+\infty)-Be^{-\lambda_0z}$ for all $z\in\R$. Finally, $\phi$ is positive due to the strong maximum principle (as for the proof of the positivity of $(\phi_1,\phi_2,\phi_3)$ after the statement of Proposition~\ref{prop:plus}) and the property $\phi(-\infty)=0$ can be proved in the same way as that in~\cite[(3.19)]{choi21} (see also~\cite{W}), since $\alpha(-\infty)<0$. We omit it here. This completes the proof of the proposition.
\end{proof}

We remark here that the solution $\phi$ of \eqref{scalar}-\eqref{1d-bc} obtained in Proposition~\ref{prop:key} also satisfies condition of the type~($\alpha$2) by construction, since it is trapped between $\uphi(z)=\max\{\phi(+\infty)-e^{-\ld_0 z},0\}$ and $\ophi(z)=\phi(+\infty)$.


\section{Proof of Theorem~\ref{th:e4}}
\setcounter{equation}{0}

Throughout this section, we suppose~\eqref{c1} and~\eqref{alpha1}-\eqref{c2}. The proof of Theorem~\ref{th:e4} is similar to that of \cite[Theorem 2.1]{choi21}. We only provide here a brief outline of the proof as follows. We fix any $s>0$. Recall from~\eqref{c1} and~\eqref{c2} that
$$\gamma_3:=1-2b(a-1)>0,\; \gamma_2:=(1-h-2ab)(a-1)>0,\; \gamma_1:=(1-k-2ab)(a-1)>0.$$

First, let $\uphi_3$ be a positive $C^1(\R)$ solution of
$$\begin{cases}
-s\uphi_3'(z)=d_3\bN_3[\uphi_3](z)+r_3\uphi_3(z)\,[\alpha(z)-2b(a-1)-\uphi_3(z)],\,z\in\bR,\\
\uphi_3(-\infty)=0,\; \uphi_3(+\infty)=\gamma_3.
\end{cases}$$
Such $\uphi_3$ exists, by Proposition~\ref{prop:key} applied with the continuous function $\alpha-2b(a-1)$ instead of~$\alpha$, with limit $\gamma_3=\alpha(+\infty)-2b(a-1)=1-2b(a-1)>0$ at $+\infty$ and limit $\alpha(-\infty)-2b(a-1)<0$ at $-\infty$. Furthermore, by Proposition~\ref{prop:key}, $\uphi_3$ is constructed such that $\uphi_3\le\uphi_3(+\infty)=\gamma_3$ in $\R$, and $\uphi_3$ converges to $\gamma_3$ exponentially at $+\infty$.

Next, note that the continuous bounded function $a\uphi_3-1-h(a-1)$ is such that
$$a\uphi_3(-\infty)-1-h(a-1)=-1-h(a-1)<0,\ \ a\uphi_3(+\infty)-1-h(a-1)=\gamma_2>0,$$
and $a\uphi_3-1-h(a-1)\le a\uphi_3(+\infty)-1-h(a-1)$ in $\R$. By Proposition~\ref{prop:key} again, there exists a positive $C^1(\R)$ solution $\uphi_2$ of
$$\begin{cases}
-s\uphi_2'(z)=d_2\bN_2[\uphi_2](z)+r_2\uphi_2(z)\,[a\uphi_3(z)-1-h(a-1)-\uphi_2(z)],\,z\in\bR,\\
\uphi_2(-\infty)=0,\; \uphi_2(+\infty)=\gamma_2,
\end{cases}$$
such that $\uphi_2\le\uphi_2(+\infty)=\gamma_2$ in $\R$ and $\uphi_2$ converges to $\gamma_2$ exponentially at $+\infty$. Similarly, there exists a positive $C^1(\R)$ solution $\uphi_1$ of
$$\begin{cases}
-s\uphi_1'(z)=d_1\bN_1[\uphi_1](z)+r_1\uphi_1(z)[a\uphi_3(z)-1-k(a-1)-\uphi_1(z)],\,z\in\bR,\\
\uphi_1(-\infty)=0,\; \uphi_1(+\infty)=\gamma_1>0,
\end{cases}$$
such that $\uphi_1\le\uphi_1(+\infty)=\gamma_1$ in $\R$ and $\uphi_1$ converges to $\gamma_1$ exponentially at $+\infty$.

It is straightforward to verify that $(\ophi_1,\ophi_2,\ophi_3)\equiv(a-1,a-1,1)$ and $(\uphi_1,\uphi_2,\uphi_3)$ are a pair of $C^1(\R)$ nonnegative bounded upper-lower-solutions of \eqref{TWS2}, such that $\underline{\phi}_i\le\overline{\phi}_i$ in $\R$ for all $i=1,2,3$ (the last property holds since $(\gamma_1,\gamma_2,\gamma_3)<(a-1,a-1,1)$ componentwise). Hence the existence of positive wave profile $\phitri$ of~\eqref{TWS2} follows from Lemma~\ref{luslem}, with
$$0<\underline{\phi}_i\le\phi_i\le\overline{\phi}_i\ \hbox{ in $\R$ for $i=1,2,3$}.$$

Note that
$$\phi_i^-:=\liminf_{z\to+\infty}\phi_i(z)\ge\gamma_i>0,\; i=1,2,3.$$
With this information, the same proof of \cite[Lemma 3.4]{choi21} can be applied to derive that $\phitri(+\infty)=E_4$. We also refer the reader to the proof of \cite[(4,1)]{gnow20}. Indeed, setting
$$\phi_i^+:=\limsup_{z\to+\infty}\phi_i(z),\; i=1,2,3,$$
picking any $\varepsilon>0$ small enough and considering the family of contracting parallelepipeds $\prod_{i=1}^3[m_i(\theta),M_i(\theta)]$ (with $\theta\in[0,1]$) defined by
$$\left\{\baa{ll}
m_1(\theta):=\theta u^*+(1-\theta)(\gamma_1-\ep), & M_1(\theta):=\theta u^*+(1-\theta)(a-1+\varepsilon),\vspace{3pt}\\
m_2(\theta):=\theta v^*+(1-\theta)(\gamma_2-\ep), & M_2(\theta):=\theta v^*+(1-\theta)(a-1+\varepsilon),\vspace{3pt}\\
m_3(\theta):=\theta w^*+(1-\theta)(\gamma_3-\ep^2), & M_3(\theta):=\theta u^*+(1-\theta)(1+\varepsilon^2),\eaa\right.$$
we can show that $0\in\Theta$ and $\sup\Theta=1$, where
$$\Theta:=\{\theta\in[0,1)\mid m_i(\theta)<\phi_i^-\le\phi_i^+<M_i(\theta),\, i=1,2,3\}.$$
The details can be found in \cite{choi21,gnow20}. We omit them here. Hence $\phitri(+\infty)=(u^*,v^*,w^*)=E_4$, and the proof of Theorem~\ref{th:e4} is complete, thanks to Proposition~\ref{prop:plus}.


\section{Proof of Theorem~\ref{th:e1}}
\setcounter{equation}{0}

Throughout this section, we suppose~\eqref{c1} and~\eqref{alpha1}. For the sufficiency part, let us assume without loss of generality that $s^*_1\ge s^*_2$, with $s^*_1$ and $s^*_2$ defined by~\eqref{defs*i}, and let us then fix $s>s^*_1$. First, we set
\be\label{defgi}
g_i(\ld):=d_i[I_i(\ld)-1]-s\ld+r_i(a-1),\; i=1,2,\;\lambda\in[0,\hat\lambda_i).
\ee
From~\eqref{c1} and (J1)-(J3) (and the comments after~(J1)-(J3)), there exist $0<\ld_1<\ld_2<\hat{\ld}_1$ such that $g_1(\ld_1)=g_1(\ld_2)=0$,  $g_1(\ld)<0$ if and only if $\ld\in(\ld_1,\ld_2)$, and $g_1(\ld)\le0$ if and only if $\ld\in[\ld_1,\ld_2]$. Similarly, since $s>s_2^*$, the equation $g_2(\ld)=0$ has two positive roots $\ld_3,\ld_4$ with $0<\ld_3<\ld_4<\hat{\ld}_2$. Moreover, $g_2(\ld)<0$ if and only if $\ld\in(\ld_3,\ld_4)$, and $g_2(\ld)\le0$ if and only if $\ld\in[\ld_3,\ld_4]$. Set now
\be\label{defg3}
g_3(\ld):=d_3[I_3(\ld)-1]-s\ld.
\ee
Note that $g_3(0)=0$ and $g_3'(0)=-s<0$. Hence we can choose
\be\label{deflambda0}
0<\ld_0<\min\{\ld_1,\ld_3,\rho,\hat\lambda_3\}
\ee
such that
$$g_3(\ld_0)<0,$$
where $\rho>0$ is as in~($\alpha 2$). We then define, for $z\in\R$,
\be\label{deful}\left\{\baa{ll}
\ophi_1(z)=\min\{(a-1)e^{-\ld_1 z}, a-1\}, & \uphi_1(z)=\max\{(a-1)e^{-\ld_1 z}-p_1e^{-\mu_1 z}, 0\},\vspace{3pt}\\
\ophi_2(z)=\min\{(a-1)e^{-\ld_3 z}, a-1\}, & \uphi_2(z)=\max\{(a-1)e^{-\ld_3 z}-p_2e^{-\mu_2 z}, 0\},\vspace{3pt}\\
\ophi_3(z)=1, & \uphi_3(z)=\max\{1-e^{-\ld_0 z}, 0\},\eaa\right.
\ee
where the positive constants $\mu_i,p_i$, $i=1,2$, are chosen (in the following order) such that
\bea
&&\ld_1<\mu_1<\min\{\ld_2,\ld_1+\ld_0\},\label{mu1}\\
&&p_1>\max\left\{a-1, \frac{r_1(a-1)[2a-1+k(a-1)]}{-g_1(\mu_1)}\right\},\label{q1}\\
&&\ld_3<\mu_2<\min\{\ld_4,\ld_3+\lambda_0\},\label{mu2}\\
&&p_2>\max\left\{a-1, \frac{r_2(a-1)[2a-1+h(a-1)]}{-g_2(\mu_2)}\right\}.\label{q2}
\eea
Note that the continuous functions $\underline{\phi}_i$ and $\overline{\phi}_i$ are nonnegative, bounded in $\R$, of class $C^1$ except at finitely many real numbers, and that $\underline{\phi}_i\le\overline{\phi}_i$ in $\R$, for $i=1,2,3$.

Then we have

\begin{lemma}\label{la:1p}
With $s>s^*_1\ge s^*_2$ and the above notations, there exists a positive solution $\phitri$ of \eqref{TWS2} such that $\uphi_i\le\phi_i\le\ophi_i$ in $\bR$, $i=1,2,3$.
\end{lemma}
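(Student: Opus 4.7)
The plan is to apply Lemma~\ref{luslem} to the pair $(\ophi_1,\ophi_2,\ophi_3)$ and $(\uphi_1,\uphi_2,\uphi_3)$ defined in~\eqref{deful}. Nonnegativity, boundedness, continuity, piecewise $C^1$ regularity off a finite exceptional set, and the componentwise ordering $\uphi_i\le\ophi_i$ are immediate from the explicit formulas, so the heart of the proof is the verification of the six differential inequalities~\eqref{u1}--\eqref{l3} of Definition~\ref{lus}. Once these are in hand, Lemma~\ref{luslem} delivers a $C^1(\R)$ solution $\phitri$ sandwiched between the two, and positivity of each $\phi_i$ then follows from the strong-maximum-principle argument recorded after Proposition~\ref{prop:plus}, since each $\uphi_i$ is not identically zero.

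For the upper-solution inequalities I would treat~\eqref{u3} first: the constancy of $\ophi_3\equiv 1$ kills both $\bN_3[\ophi_3]$ and $\ophi_3'$, and the algebraic factor $\alpha(z)-b\uphi_1-b\uphi_2-1$ is nonpositive by~\eqref{alpha1} and $\uphi_i\ge0$. For~\eqref{u1} I split at $z=0$: on $(-\infty,0]$, $\ophi_1\equiv a-1$ attains its maximum so $\bN_1[\ophi_1]\le 0$ and the algebraic factor reduces, after exploiting $\ophi_3\le 1$, to $r_1(a-1)[-k\uphi_2]\le 0$; on $[0,+\infty)$, $\ophi_1(z)=(a-1)e^{-\ld_1 z}$, and the pointwise minorant $\ophi_1(x)\le(a-1)e^{-\ld_1 x}$ combined with $g_1(\ld_1)=0$ yields $d_1\bN_1[\ophi_1]+s\ophi_1'\le -r_1(a-1)^2 e^{-\ld_1 z}$, which absorbs the remaining algebraic contribution. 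The treatment of~\eqref{u2} is identical with $(\ld_1,g_1,r_1,k)$ replaced by $(\ld_3,g_2,r_2,h)$.

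For the lower-solution inequalities, on the zero region of each $\uphi_i$ both $\uphi_i'$ and the algebraic term vanish while $d_i(J_i*\uphi_i)\ge 0$, so the inequality holds trivially. For~\eqref{l3} on $\{\uphi_3>0\}=(0,+\infty)$, the minorant $\uphi_3(x)\ge 1-e^{-\ld_0 x}$ yields $d_3\bN_3[\uphi_3]+s\uphi_3'\ge -e^{-\ld_0 z}g_3(\ld_0)>0$, and one then uses~\eqref{a2}, the strict inequality $\ld_0<\rho$, and the smallness of $\ep$ to show that the algebraic term is absorbed; here the sharper bounds $\ophi_i(z)\le(a-1)e^{-\ld_i z}$ for $z\ge 0$ are essential. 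For~\eqref{l1} on $\{\uphi_1>0\}\subset(0,+\infty)$, applying the pointwise minorant $\uphi_1(x)\ge(a-1)e^{-\ld_1 x}-p_1 e^{-\mu_1 x}$ inside the convolution, together with $g_1(\ld_1)=0$, gives
$$d_1\bN_1[\uphi_1](z)+s\uphi_1'(z)\ge -r_1(a-1)^2 e^{-\ld_1 z}+p_1 e^{-\mu_1 z}\bigl[-g_1(\mu_1)+r_1(a-1)\bigr].$$
Expanding the algebraic factor as $(a-1)-\uphi_1-k\ophi_2-a(1-\uphi_3)$, the $r_1\uphi_1(a-1)$ contribution cancels the $-r_1(a-1)^2 e^{-\ld_1 z}$ exactly and the $-r_1(a-1)p_1 e^{-\mu_1 z}$ piece cancels the $+r_1(a-1)p_1 e^{-\mu_1 z}$ above; the remaining cross-products are bounded using $\ophi_2(z)\le(a-1)e^{-\ld_3 z}$, $1-\uphi_3(z)=e^{-\ld_0 z}$ and $\uphi_1\le(a-1)e^{-\ld_1 z}$, producing terms of exponential rate $e^{-2\ld_1 z}$, $e^{-(\ld_1+\ld_3)z}$ and $e^{-(\ld_1+\ld_0)z}$, each of which is strictly faster than $e^{-\mu_1 z}$ by~\eqref{deflambda0} and~\eqref{mu1}. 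The leading positive term $p_1[-g_1(\mu_1)]e^{-\mu_1 z}$ then dominates, with the threshold being exactly~\eqref{q1}; the treatment of~\eqref{l2} is analogous, powered by~\eqref{mu2}--\eqref{q2}.

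The main obstacle will be the bookkeeping for~\eqref{l1}--\eqref{l2}, and in particular the observation that one must use the sharper exponential upper bound $\ophi_i(z)\le(a-1)e^{-\ld_i z}$ on $z>0$ rather than the uniform bound $\ophi_i\le a-1$: using the latter, the $k\ophi_2$ cross-product in the algebraic factor for $\mathcal{L}_1$ would contribute an un-decaying term $k r_1(a-1)^2 e^{-\ld_1 z}$ which decays strictly slower than the leading positive correction $p_1[-g_1(\mu_1)]e^{-\mu_1 z}$ (since $\mu_1>\ld_1$), and no choice of $p_1$ could rescue the inequality. The parameter constraints~\eqref{mu1}--\eqref{q2} are engineered precisely to guarantee that every correction term has exponential decay rate strictly exceeding $\mu_i$ and is small enough in amplitude to be absorbed.
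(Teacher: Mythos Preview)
Your proposal is correct and follows the same approach as the paper's proof: verify the six differential inequalities of Definition~\ref{lus} for the explicit pair~\eqref{deful}, then invoke Lemma~\ref{luslem} and the strong maximum principle. The paper's proof makes explicit the quantitative choice of $\ep$ via $g_3(\ld_0)+\ep r_3<0$ and, in the $\mL_3$ verification, uses $2b(a-1)<1$ from~\eqref{c1} together with $\ld_0<\min\{\ld_1,\ld_3\}$ from~\eqref{deflambda0} to absorb the $b\ophi_1+b\ophi_2$ contribution---details you gesture at but do not spell out.
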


\begin{proof}
We choose $\ep>0$ small enough such that
\be\label{ep1}
g_3(\ld_0)+\ep r_3<0.
\ee
Also, up to a translation, condition \eqref{a2} holds for this $\ep$.

We now verify \eqref{u1}-\eqref{l3} hold for $(\overline{\phi}_1,\overline{\phi}_2,\overline{\phi}_3)$ and $(\underline{\phi}_1,\underline{\phi}_2,\underline{\phi}_3)$ defined in \eqref{deful} for all $z\in\bR$ except finitely many points. For~\eqref{u1}, the inequality is obvious for $z<0$ since $\ophi_1\le a-1$ in $\R$, $\ophi_1=a-1$ in $(-\infty,0]$, and $\ophi_3\equiv 1$ in $\R$. It then suffices to consider $z>0$. As in the proof of Proposition~\ref{prop:key}, using $\ophi_1(y)\le (a-1)e^{-\ld_1 y}$ for all $y\in\bR$ and $\ophi_1(z)=(a-1)e^{-\lambda_1z}$ for all $z>0$, we have
\be\label{N1ophi11}
d_1\bN_1[\ophi_1](z)+s\ophi_1'(z)\le (a-1)e^{-\ld_1 z}\{d_1[I_1(\ld_1)-1]-s\ld_1\}\hbox{ for all }z>0,
\ee
and thus
\be\label{N1ophi12}\baa{ll}
&d_1\bN_1[\ophi_1](z)+s\ophi_1'(z)+r_{1}\overline{\phi}_1(z)\,[-1-\overline{\phi}_1(z)-k\underline{\phi}_2(z)+a\overline{\phi}_3(z)]\vspace{3pt}\\
\le& (a-1)e^{-\ld_1 z}\{d_1[I_1(\ld_1)-1]-s\ld_1+r_1(-1+a)\}=0.\eaa
\ee
Hence \eqref{u1} holds for all $z\neq 0$. Similarly, \eqref{u2} holds for all $z\neq 0$, using
\be\label{N1ophi13}
g_2(\lambda_3)=d_2[I_2(\ld_3)-1]-s\ld_3+r_2(a-1)=0.
\ee
It is trivial that \eqref{u3} holds for all $z\in\bR$, since $\alpha(z)\le\alpha(+\infty)=1$ and $\uphi_1(z)\ge0$, $\uphi_2(z)\ge0$ for all $z\in\R$.

For \eqref{l1}, since $p_1\!>\!a\!-\!1\!>\!0$ and $\mu_1\!>\!\ld_1\!>\!0$ there is a unique $z_1\!:=\!(\mu_1\!-\!\lambda_1)^{-1}\ln(p_1/(a\!-\!1))\!>\!0$ such that $\uphi_1(z)=0$ for all $z\le z_1$ and
$$\uphi_1(z)=(a-1)e^{-\ld_1 z}-p_1e^{-\mu_1 z}>0\ \hbox{ for all }z>z_1.$$
It is trivial that~\eqref{l1} holds for $z<z_1$. On the other hand, since $\uphi_1(y)\ge(a-1)e^{-\lambda_1y}-p_1e^{-\mu_1y}$ for all $y\in\R$ with equality in $[z_1,+\infty)$, with $0<\lambda_1<\mu_1<\lambda_2<\hat\lambda_1$, it follows that, for every $z>z_1$,
\be\label{N1uphi1}
d_1\bN_1[\uphi_1](z)+s\uphi_1'(z)\ge(a-1)e^{-\ld_1 z}\{d_1[I_1(\ld_1)-1]-s\ld_1\}-p_1e^{-\mu_1 z}\{d_1[I_1(\mu_1)-1]-s\mu_1\},
\ee
and
\beaa
&\!\!&r_{1}\underline{\phi}_1(z)\,[-1-\underline{\phi}_1(z)-k\overline{\phi}_2(z)+a\underline{\phi}_3(z)]\\
&\!\!\ge\!\!&r_1[(a-1)e^{-\ld_1 z}-p_1e^{-\mu_1 z}]\{-1-(a-1)e^{-\ld_1 z}-k(a-1)e^{-\ld_3 z}+a-ae^{-\ld_0 z}\}\\
&\!\!\ge\!\!&r_1(a-1)[(a-1)e^{-\ld_1 z}-p_1e^{-\mu_1 z}]-r_1(a-1)e^{-\ld_1 z}[(a-1)(e^{-\ld_1 z}+ke^{-\lambda_3z})+ae^{-\ld_0 z}].
\eeaa
Hence, since $g_1(\lambda_1)=0$, we have, for $z>z_1$,
\beaa
\mL_1(z)&\!\!\!\ge\!\!\!& e^{-\mu_1 z}\{-p_1g_1(\mu_1)\!-\!r_1(a\!-\!1)[(a\!-\!1)(e^{(\mu_1\!-\!2\ld_1)z}\!+\!ke^{(\mu_1\!-\!\lambda_1\!-\!\lambda_3)z})\!+\!ae^{(\mu_1\!-\!\ld_1\!-\!\ld_0)z}]\}\\
&\!\!\!\ge\!\!\!&e^{-\mu_1 z}\{-p_1g_1(\mu_1)-r_1(a-1)[2a-1+k(a-1)]\}>0,
\eeaa
due to~\eqref{deflambda0} and~\eqref{mu1}-\eqref{q1}. Hence \eqref{l1} holds for all $z\neq z_1$.

Similarly, since $p_2\!>\!a\!-\!1\!>\!0$ and $\mu_2\!>\!\ld_3\!>\!0$, there is a unique $z_2\!:=\!(\mu_2\!-\!\lambda_3)^{-1}\!\ln(p_2/(a\!-\!1))\!>\!0$ such that $\uphi_2(z)=0$ for all $z\le z_2$ and
$$\uphi_2(z)=(a-1)e^{-\ld_3 z}-p_2e^{-\mu_2 z}>0\ \hbox{ for all }z>z_2.$$
Then, as in the previous paragraph, we can easily check that \eqref{l2} holds for all $z\neq z_2$, using~\eqref{deflambda0} and~\eqref{mu2}-\eqref{q2}, together with $g_2(\lambda_3)=0$. We omit the details.

Finally, \eqref{l3} holds trivially for $z<0$. For $z>0$, we compute, using~\eqref{alpha1} and~\eqref{a2},
\beaa
\mL_3(z)&=&d_{3} \bN_3[\underline{\phi}_3](z)+s\underline{\phi}_3^{\prime}(z)+r_{3}\underline{\phi}_3(z)[\alpha(z)-b\overline{\phi}_1(z)-b\overline{\phi}_2(z)-\underline{\phi}_3(z)]\\
&\ge& -e^{-\ld_0 z}\{d_3[I_3(\ld_0)-1]-s\ld_0\}\\
&&+r_3(1-e^{-\ld_0 z})[1-\ep e^{-\rho z}-b(a-1)e^{-\ld_1 z}-b(a-1)e^{-\ld_3 z}-1+e^{-\ld_0 z}]\\
&\ge& -e^{-\ld_0 z}\{d_3[I_3(\ld_0)-1]-s\ld_0\}-\varepsilon r_3(1-e^{-\ld_0 z})e^{-\rho z},
\eeaa
using $2b(a-1)<1$ (by \eqref{c1}) and $e^{-\ld_0 z}\ge e^{-\ld_j z}$ for $z>0$ (since $\ld_0<\ld_j$) for $j=1,3$. Moreover, since $0<\lambda_0<\rho$ by~\eqref{deflambda0}, we deduce that
$$\mL_3(z)\ge -e^{-\ld_0 z}\{d_3[I_3(\ld_0)-1]-s\ld_0+r_3\ep\}>0\ \hbox{ for all }z>0,$$
by the choice of $\ep$ in \eqref{ep1}. Hence \eqref{l3} holds for all $z\neq 0$.

Lemma~\ref{luslem} then yields the existence of a nonnegative bounded solution $(\phi_1,\phi_2,\phi_3)$ of~\eqref{TWS2} such that $\uphi_i\le\phi_i\le\ophi_i$ in $\R$ for $i=1,2,3$. In particular, each function $\phi_i$ is nonnegative and nontrivial, hence it is positive in $\R$ by the strong maximum principle, as explained after Proposition~\ref{prop:plus}. The proof of Lemma~\ref{la:1p} is thereby complete.
\end{proof}

\begin{proof}[Proof of Theorem~$\ref{th:e1}$]
By Lemma~\ref{la:1p} and the definition of the upper-lower-solutions in~\eqref{deful}, the positive solution $(\phi_1,\phi_2,\phi_3)$ of~\eqref{TWS2} given in Lemma~\ref{la:1p} satisfies
$$\phitri(+\infty)=(0,0,1)=E_1.$$
Then, together with Proposition~\ref{prop:plus}, the existence part of Theorem~\ref{th:e1} follows.

For the only if part, we follow the idea of \cite[Theorem 3.5]{choi21}. Assume that there exists a positive solution $\phitri$ of \eqref{TWS2} and \eqref{bc-1} for some $s>0$, and assume that $(\tilde\lambda_i,\hat\lambda_i)=(-\infty,+\infty)$ for some $i\in\{1,2\}$. Without loss of generality, let us assume that $i=1$.
Set $\zeta(z):=\phi_1'(z)/\phi_1(z)$. Then $\zeta$ satisfies
$$-s\zeta(z)=d_1\left[\int_\bR J_1(y)e^{\int_z^{z-y}\zeta(x)dx}dy-1\right]+r_1[-1-\phi_1(z)-k\phi_2(z)+a\phi_3(z)],\; z\in\bR.$$
It follows from~\cite[Proposition 3.7]{zlw12}\footnote{Notice that, in addition to the assumption $(\tilde\lambda_1,\hat\lambda_1)=(-\infty,+\infty)$, $J_1$ is assumed to be even and of class~$C^1(\R)$ in~\cite{zlw12}, but the proof of~\cite[Proposition 3.7]{zlw12} still works under assumptions (J1)-(J3) for $J_1$, under the assumption $(\tilde\lambda_1,\hat\lambda_1)=(-\infty,+\infty)$.} that the limit $-\ld:=\lim_{z\to+\infty}\zeta(z)$ exists in $\R$ (necessarily, $\lambda\ge0$, since $\phi_1>0$ in $\R$ and $\phi_1(+\infty)=0$) and $\ld$ satisfies
$$s\ld=d_1[I_1(\ld)-1]+r_1(a-1).$$
Since $I_1(0)=1$ and $r_1(a-1)>0$, it follows that $\lambda>0$, hence we obtain that $s\ge s_1^*$, owing to the definition of $s^*_1$ in~\eqref{defs*i}. Thereby the proof is complete.
\end{proof}


\section{Proof of Theorem~\ref{th:e2}}
\setcounter{equation}{0}

Throughout this section, we suppose~\eqref{c1} and~\eqref{alpha1}, together with~\eqref{de2}-\eqref{re2}. For the sufficiency part, let us assume that $s\ge R_2(\rho)$ and $s>s_2^{**}$, where $\rho>0$ is the constant in~($\alpha$2), and $R_2$ and $s^{**}_2$ are defined in~\eqref{defs**2}. We recall that $\beta_2>0$ is defined in~\eqref{defbeta2}. From (J1)-(J3) and~\eqref{c1}, the function $G_2$ defined by
\be\label{defG2}
G_2(\ld):=d_2[I_2(\ld)-1]-s\ld+r_2\beta_2,\ \ \lambda\in[0,\hat\lambda_2),
\ee
has two positive zeroes $\ld_5, \ld_6$ such that $0<\ld_5<\ld_6<\hat\lambda_2$. Note that $G_2(\ld)<0$ if and only if $\ld\in(\ld_5,\ld_6)$, and $G_2(\ld)\le0$ if and only if $\ld\in[\ld_5,\ld_6]$. Since $s\ge R_2(\rho)$ and $\rho>0$, one necessarily has $\rho<\hat\lambda_2$ and $G_2(\rho)\le0$, hence $\rho\ge\ld_5$.

We recall that $u_p=(a-1)/(ab+1)$ and $w_p=(b+1)/(ab+1)$ and we define, for $z\in\R$,
\be\label{ou2}\left\{\baa{ll}
\ophi_1(z):=\min\{u_p+Ae^{-\ld_5 z},a-1\}, & \uphi_1(z):=\max\{u_p(1-e^{-\ld_5 z}),0\},\vspace{3pt}\\
\ophi_2(z):=\min\{(a-1)e^{-\ld_5 z},a-1\}, & \uphi_2(z):=\max\{(a-1)e^{-\ld_5 z}-qe^{-\nu z},0\},\vspace{3pt}\\
\ophi_3(z):=\min\{w_p+bu_pe^{-\ld_5 z},1\}, & \uphi_3(z):=\max\{w_p(1-e^{-\ld_5 z}),0\},\eaa\right.
\ee
where
\be\label{defA}
A:=a-1-u_p
\ee
and the positive constants $\nu$ and $q$ are chosen to satisfy
\be
\ld_5<\nu<\min\{\ld_6,2\ld_5\},\ \ q>\max\left\{a-1, \frac{r_2(a-1)[(1+h)(a-1)+aw_p]}{-G_2(\nu)}\right\}.\label{qq}
\ee
Note that $0<u_p<a-1$ (hence, $A>0$), $0<w_p<1$, $w_p+bu_p=1$, and that all functions $\uphi_i$ and $\ophi_i$ are continuous, nonnegative, bounded in $\R$, of class $C^1$ except at finitely many points, with $\underline{\phi}_i\le\overline{\phi}_i$ in $\R$ for $i=1,2,3$.

Then we have

\begin{lemma}\label{la:e2}
With $s>s^{**}_2$, $s\ge R_2(\rho)$ and the above notations, there exists a positive solution $\phitri$ of \eqref{TWS2} such that $\uphi_i\le\phi_i\le\ophi_i$ in $\bR$, $i=1,2,3$.
\end{lemma}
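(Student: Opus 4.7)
The plan is to apply the Schauder fixed-point machinery of Lemma~\ref{luslem} to the pair $(\overline{\phi}_1,\overline{\phi}_2,\overline{\phi}_3)$ and $(\underline{\phi}_1,\underline{\phi}_2,\underline{\phi}_3)$ defined in~\eqref{ou2}. It thus suffices to verify the six inequalities \eqref{u1}--\eqref{l3} from Definition~\ref{lus} on the complement of the finite set formed by the kinks of the various $\min/\max$ functions. Once Lemma~\ref{luslem} produces $(\phi_1,\phi_2,\phi_3)$ with $\underline{\phi}_i\le\phi_i\le\overline{\phi}_i$, each $\phi_i$ is nontrivial (since $\underline{\phi}_1$ and $\underline{\phi}_3$ are positive on $(0,+\infty)$, and $\underline{\phi}_2$ is positive on $(z_q,+\infty)$ where $z_q:=(\nu-\lambda_5)^{-1}\ln(q/(a-1))$), hence positive in $\R$ by the strong maximum principle argument recalled after Proposition~\ref{prop:plus}.

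The verification rests on four algebraic identities that follow immediately from the definitions of $u_p$, $w_p$, $A=a-1-u_p$ and $\beta_2$: namely $aw_p=1+u_p$, $w_p+bu_p=1$, $A=abu_p$ and $(1-h)u_p=\beta_2$. They guarantee that the \emph{constant} contributions in the asymptotic reaction brackets at $z=+\infty$ cancel out, leaving only exponential remainders to be absorbed by the nonlocal-drift part. Hypothesis~\eqref{de2} is decisive for this absorption: since $J_1=J_2=J_3$ and $\max\{d_1,d_3\}\le d_2$, convexity of $I_2$ together with $I_2(0)=1$ gives $d_i[I_i(\lambda)-1]\le d_2[I_2(\lambda)-1]$ for $i=1,3$ and $\lambda\ge0$, so the equation $G_2(\lambda_5)=0$ transfers to favourable one-sided bounds in the equations for $\phi_1$ and $\phi_3$. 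As in Lemma~\ref{la:1p}, we translate $\alpha$ so that the constant $\varepsilon$ in~\eqref{a2} can be made as small as we wish, in particular $\varepsilon<1-2b(a-1)$.

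For the upper inequalities, \eqref{u1} is direct on $z\le0$, where $\overline{\phi}_1\equiv a-1$, $\overline{\phi}_3\equiv1$ and $-1-(a-1)+a=0$ reduce the matter to $\mathcal{N}_1[\overline{\phi}_1]\le0$; on $z>0$ the identities $aw_p-1-u_p=0$ and $A=abu_p$ make the reaction bracket vanish identically, so only $Ae^{-\lambda_5 z}\{d_1[I_1(\lambda_5)-1]-s\lambda_5\}\le-Ar_2\beta_2e^{-\lambda_5 z}\le0$ survives. Inequality \eqref{u2} is analogous: $(1-h)u_p=\beta_2$ reduces the bracket on $z>0$ to $\beta_2(1-e^{-\lambda_5 z})$, whose contribution is exactly overtaken by the $-r_2\beta_2(a-1)e^{-\lambda_5 z}$ drift term, leaving an $O(e^{-2\lambda_5 z})$ nonpositive remainder. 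For \eqref{u3} the identity $w_p+bu_p=1$ together with $\alpha\le1$ makes the bracket $\alpha-b\underline{\phi}_1-b\underline{\phi}_2-\overline{\phi}_3\le0$ on $z>0$, while the drift is again nonpositive via $d_3\le d_2$.

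For the lower inequalities, \eqref{l1} requires assumption~\eqref{re2}: after the $aw_p=1+u_p$ cancellation, the reaction part becomes $-r_1u_p(1-e^{-\lambda_5 z})[1+k(a-1)]e^{-\lambda_5 z}$, while the drift is bounded below by $u_pr_2\beta_2e^{-\lambda_5 z}$, so nonnegativity follows \emph{exactly} from $r_1[1+k(a-1)]\le r_2\beta_2$. Inequality \eqref{l2} is handled as in Lemma~\ref{la:1p}: the constant $q$ from~\eqref{qq} is large enough that the strictly positive $-qG_2(\nu)e^{-\nu z}$ term (using $G_2(\nu)<0$) dominates all remaining contributions, uniformly in $z\ge z_q$ thanks to $\nu<2\lambda_5$ which implies $e^{-2\lambda_5 z}\le e^{-\nu z}$ on $[0,+\infty)$. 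The main obstacle is \eqref{l3}: after the cancellation $1-bu_p-w_p=0$ the reaction bracket on $z>0$ reads $[1-2b(a-1)]e^{-\lambda_5 z}+(\alpha(z)-1)$, so one must dominate the deficit $\alpha(z)-1\ge-\varepsilon e^{-\rho z}$ by the leading $e^{-\lambda_5 z}$ term; this is precisely where the hypothesis $s\ge R_2(\rho)$ enters, for it forces $G_2(\rho)\le0$ and hence $\rho\in[\lambda_5,\lambda_6]$, in particular $\rho\ge\lambda_5$, so $e^{-\rho z}\le e^{-\lambda_5 z}$ on $z\ge0$; combined with $\varepsilon<1-2b(a-1)$ this renders the bracket nonnegative, and \eqref{l3} then follows from the drift lower bound $w_p r_2\beta_2 e^{-\lambda_5 z}\ge0$. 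Assembling the six inequalities and invoking Lemma~\ref{luslem} yields the desired positive solution.
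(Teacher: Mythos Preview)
Your proof is correct and follows essentially the same approach as the paper's: verify the six differential inequalities \eqref{u1}--\eqref{l3} for the pair~\eqref{ou2} using the algebraic identities for $u_p,w_p,A,\beta_2$, the relation $G_2(\lambda_5)=0$, and the transfer $d_i[I_i(\lambda_5)-1]\le d_2[I_2(\lambda_5)-1]$ afforded by~\eqref{de2}, then invoke Lemma~\ref{luslem}. The only noteworthy difference is in the handling of~\eqref{l3}: you take $\varepsilon<1-2b(a-1)$, which makes the reaction bracket itself nonnegative on $z>0$, whereas the paper takes $\varepsilon<r_2\beta_2/r_3$ and instead lets the drift term $w_pr_2\beta_2e^{-\lambda_5 z}$ absorb a possibly negative reaction contribution $-r_3w_p\varepsilon e^{-\lambda_5 z}$; both bookkeepings are valid and use $\rho\ge\lambda_5$ in the same way.
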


\begin{proof}
Choose $\ep\in(0,r_2\beta_2/r_3)$ so that, up to a translation, \eqref{a2} holds for this $\ep$. We verify \eqref{u1}-\eqref{l3} hold for $(\overline{\phi}_1,\overline{\phi}_2,\overline{\phi}_3)$ and $(\underline{\phi}_1,\underline{\phi}_2,\underline{\phi}_3)$ defined in \eqref{ou2} for all $z\in\bR$ except finitely many points.

That \eqref{u1}-\eqref{u3} hold for $z<0$ is trivial. For $z>0$, since
$$-1-\overline{\phi}_1(z)-k\underline{\phi}_2(z)+a\overline{\phi}_3(z)\le -1-u_p-Ae^{-\ld_5 z}+aw_p+abu_pe^{-\ld_5 z}=0,$$
we obtain
$$\mU_1(z)\le Ae^{-\ld_5 z}\{d_1[I_1(\ld_5)\!-\!1]\!-\!s\ld_5\}\le Ae^{-\ld_5 z}\{d_2[I_2(\ld_5)\!-\!1]\!-\!s\ld_5\}<Ae^{-\lambda_5z}G_2(\lambda_5)=0,$$
since $I_1(\lambda_5)-1\ge0$ (from the general properties explained after~(J1)-(J3)) and since $d_1\le d_2$ and $J_1=J_2$ in $\bR$ by~\eqref{de2}. Hence \eqref{u1} holds for all $z\neq 0$.

For $z>0$, we compute
\beaa
\mU_2(z)&\le&(a-1)e^{-\ld_5 z}\{d_2[I_2(\ld_5)-1]-s\ld_5\}\\
&& +r_2(a-1)e^{-\ld_5 z}\{(-1-hu_p+aw_p)+[hu_p-(a-1)+abu_p]e^{-\ld_5 z}\}\\
&=&r_2(a-1)e^{-2\ld_5 z}[hu_p-(a-1)+abu_p]\le 0,
\eeaa
using $-1-hu_p+aw_p=\beta_2$, $G_2(\ld_5)=0$ and $hu_p-(a-1)+abu_p<(1+ab)u_p-(a-1)=0$. Hence \eqref{u2} holds for all $z\neq 0$.

Still for $z>0$, we have
$$\alpha(z)-b\uphi_1(z)-b\uphi_2(z)-\ophi_3(z)\le 1-bu_p(1-e^{-\ld_5 z})-w_p-bu_pe^{-\ld_5 z}=0,$$
hence
$$\mU_3(z)\le bu_pe^{-\ld_5 z}\{d_3[I_3(\ld_5)-1]-s\ld_5\}\le bu_pe^{-\ld_5 z}\{d_2[I_2(\ld_5)-1]-s\ld_5\}<0,$$
since $I_3(\lambda_5)-1\ge0$, $d_3\le d_2$ and $J_3=J_2$ in $\bR$, by~\eqref{de2}. Hence \eqref{u3} holds for all $z\neq 0$.

For~\eqref{l1}, it is trivial for $z<0$. For $z>0$, we have
$$d_1\bN_1[\uphi_1](z)+s\uphi_1'(z)\ge -u_pe^{-\ld_5 z}\{d_1[I_1(\ld_5)-1]-s\ld_5\}$$
and
$$r_1\uphi_1(z)\,[-1-\uphi_1(z)-k\ophi_2(z)+a\uphi_3(z)]=r_1u_p(1-e^{-\ld_5 z})[-1-k(a-1)]e^{-\ld_5 z},$$
using $-1-u_p+aw_p=0$. Hence we obtain
\beaa
\mL_1(z)&\ge& -u_pe^{-\ld_5 z}\{d_1[I_1(\ld_5)-1]-s\ld_5+r_1[1+k(a-1)]\}\\
&\ge& -u_pe^{-\ld_5 z}\{d_2[I_2(\ld_5)-1]-s\ld_5+r_2\beta_2]\}=0,
\eeaa
since $I_1(\lambda_5)-1\ge0$, $d_1\le d_2$, $J_1=J_2$ in $\bR$ and $r_1[1+k(a-1)]\le r_2\beta_2$, by~\eqref{de2}-\eqref{re2}. Hence~\eqref{l1} holds for all $z\neq 0$.

Note that, since $q>a-1$ and $\nu>\lambda_5$, there is a unique $z_1:=(\nu-\lambda_5)^{-1}\ln(q/(a-1))>0$ such that
\beaa
\uphi_2(z)=
\begin{cases}
\ (a-1)e^{-\ld_5 z}-qe^{-\nu z}>0, & z>z_1,\\
\ 0, & z\le z_1.
\end{cases}
\eeaa
Clearly, \eqref{l2} holds for $z<z_1$. For $z>z_1$, we compute
$$d_2\bN_2[\uphi_2](z)+s\uphi_2'(z)\ge (a-1)e^{-\ld_5 z}\{d_2[I_2(\ld_5)-1]-s\ld_5\}-qe^{-\nu z}\{d_2[I_2(\nu)-1]-s\nu\},$$
using $\uphi_2(y)\ge (a-1)e^{-\ld_5 y}-qe^{-\nu y}$ for all $y\in\bR$, with equality at $z$. Also, we have, since $\beta_2=-1-hu_p+aw_p$,
\beaa
&&r_{2}\underline{\phi}_2(z)[-1-h\overline{\phi}_1(z)-\underline{\phi}_2(z)+a\underline{\phi}_3(z)]\\
&=&r_2\uphi_2(z)\{\beta_2-h(a-1)e^{-\ld_5 z}+hu_pe^{-\ld_5 z}-(a-1)e^{-\ld_5 z}+qe^{-\nu z}-aw_pe^{-\ld_5 z}\}\\
&\ge&r_2[(a-1)e^{-\ld_5 z}-qe^{-\nu z}]\{\beta_2-[(1+h)(a-1)+aw_p]e^{-\ld_5 z}\}\\
&\ge&r_2\beta_2[(a-1)e^{-\ld_5 z}-qe^{-\nu z}]-r_2(a-1)[(1+h)(a-1)+aw_p]e^{-2\ld_5 z},
\eeaa
for all $z>z_1$. Hence, using $G_2(\lambda_5)=0$, we obtain, for $z>z_1$,
\beaa
\mL_2(z)&\ge& e^{-\nu z}\{-qG_2(\nu)-r_2(a-1)[(1+h)(a-1)+aw_p]e^{(\nu-2\ld_5) z}\}\\
&\ge&e^{-\nu z}\{-qG_2(\nu)-r_2(a-1)[(1+h)(a-1)+aw_p]\}>0,
\eeaa
using $\nu<2\ld_5$ and \eqref{qq}. Hence \eqref{l2} holds for all $z\neq z_1$.

Finally, we consider \eqref{l3}. It suffices to consider the case $z>0$. For $z>0$, we have $\uphi_3(z)=w_p-w_pe^{-\ld_5 z}>0$, while $\uphi_3(y)\ge w_p-w_pe^{-\ld_5 y}$ for all $y\in\bR$. Hence
\beaa
d_3\bN_3[\uphi_3](z)+s\uphi_3'(z)&\ge& -w_pe^{-\ld_5 z}\{d_3[I_3(\ld_5)-1]-s\ld_5\}\\
&\ge& -w_p e^{-\ld_5 z}\{d_2[I_2(\ld_5)-1]-s\ld_5\}=r_2\beta_2w_p e^{-\ld_5 z},
\eeaa
since $I_3(\lambda_5)-1\ge0$, $d_3\le d_2$, $J_3=J_2$ by~\eqref{de2}, and $G_2(\ld_5)=0$. Also, for $z>0$, using $1-bu_p-w_p=0$, $A=a-1-u_p$ and $2b(a-1)<1$, together with~\eqref{alpha1} and~\eqref{a2}, we compute
\beaa
r_{3}\underline{\phi}_3(z)[\alpha(z)-b\overline{\phi}_1(z)-b\overline{\phi}_2(z)-\underline{\phi}_3(z)] &\ge& r_3\uphi_3(z)\{-\ep e^{-\rho z}+[1-2b(a-1)]e^{-\ld_5 z}\}\\
&\ge& -r_3w_p\ep e^{-\rho z}\ge -r_3w_p\ep e^{-\ld_5 z},
\eeaa
due to $\rho\ge\ld_5$. Hence, for $z>0$, $\mL_3(z)\ge r_2\beta_2w_p e^{-\ld_5 z}-r_3w_p\ep e^{-\ld_5 z}>0$ since $\ep<r_2\beta_2/r_3$. We conclude that \eqref{l3} holds for all $z\neq 0$.

Since $0\le\uphi_i\le\ophi_i$ in~$\bR$ for $i=1,2,3$, and each function $\uphi_i$ is nontrivial, the conclusion of Lemma~\ref{la:e2} follows from Lemma~\ref{luslem}, as at the end of the proof of Lemma~\ref{la:1p}.
\end{proof}

With Lemma~\ref{la:e2}, Theorem~\ref{th:e2} can be proved in the same way as that of Theorem~\ref{th:e1}. We safely omit it here.


\section{Waves with critical speed}
\setcounter{equation}{0}

This section is devoted to the proofs of Theorems~\ref{th:c1}-\ref{th:c2} on the existence of forced waves with critical speeds. Three cases are considered: two of them are concerned with waves connecting the trivial state $(0,0,0)$ and the predator-free state $E_1=(0,0,1)$, and the last one is concerned with waves connecting the trivial state $(0,0,0)$ and the mixed state $E_2=(u_p,0,w_p)$. Throughout this section, in addition to~\eqref{c1} and~\eqref{alpha1}, we assume that $J_1$ is compactly supported. Hence there is a positive constant $S>0$ such that
\be\label{SS}
J_1(y)=0\ \hbox{ for almost every } |y|>S.
\ee
In particular, $\tilde{\lambda}_1=-\infty$ and $\hat{\lambda}_1=+\infty$ in~(J3).


\subsection{Waves connecting $(0,0,0)$ and $E_1=(0,0,1)$ in the case $s^*_1=s^*_2$: proof of Theorem~\ref{th:c1}}

In this subsection, we assume that $s^*_1=s^*_2$, with $s^*_i>0$ defined in~\eqref{defs*i}, and that $J_2$ is also compactly supported. Even if it means increasing $S>0$, we can assume without loss of generality that
\be\label{SS2}
J_2(y)=0\ \hbox{ for almost every } |y|>S.
\ee
We consider the critical speed
$$s=s^*_1=s^*_2.$$
For $i=1,2$, let $\lambda^*_i>0$ be the unique minimum of $Q_i$ in $(0,\hat\lambda_i)=(0,+\infty)$ given by~\eqref{defs*i}. With $g_i$ defined in~\eqref{defgi} with $s=s^*_1=s^*_2$, one has $g_i(\lambda^*_i)=0$, and $\lambda^*_i$ is the unique positive root of this equation. Since $Q'_i(\lambda^*_i)=0$ for $i=1,2$, we also have
\be\label{gi-d}
s=s^*_1=s^*_2=d_1\int_{\bR}J_1(y)ye^{\lambda^*_1y}dy=d_2\int_{\bR}J_2(y)ye^{\lambda^*_2y}dy.
\ee
With $\rho>0$ as in~($\alpha 2$) and $g_3$ as in~\eqref{defg3}, we choose a constant $\ld_0>0$ such that
$$0<\lambda_0<\min\{\lambda^*_1,\lambda^*_2,\rho\}\ \hbox{ and }\ g_3(\ld_0)=d_3[I_3(\ld_0)-1]-s\ld_0<0.$$

We then define
\be\label{cu21}\left\{\baa{l}
\ophi_1(z)=\begin{cases}(a-1)Bze^{-\ld^*_1z}, & z>z_1,\\ a-1, & z\le z_1,\end{cases}\ \ \ \ \ \ \ophi_2(z)=\begin{cases}(a-1)Bze^{-\ld^*_2z}, & z>z_2,\\ a-1, & z\le z_2,\end{cases}\vspace{5pt}\\
\uphi_1(z)=\begin{cases}(a-1)Bze^{-\lambda^*_1z}-p_3z^{1/2}e^{-\lambda^*_1z}, & z>z_3,\\ 0, & z\le z_3,\end{cases}\vspace{5pt}\\
\uphi_2(z)=\begin{cases}(a-1)Bze^{-\lambda^*_2z}-p_4z^{1/2}e^{-\lambda^*_2z}, & z>z_4,\\ 0, & z\le z_4,\end{cases}\vspace{5pt}\\
\ophi_3(z)=1,\ \ \uphi_3(z)=\max\{1-e^{-\lambda_0(z-z_0)},0\},\eaa\right.
\ee
where the parameters $B,z_1,z_2,z_0,p_3,z_3,p_4,z_4$ are chosen in the following order:
\begin{itemize}
\item $B>\max\{\lambda^*_1e,\lambda^*_2e\}$ large enough such that $z_1-z'_1>S$ and $z_2-z'_2>S$, where $0<z'_1<1/\lambda^*_1<z_1$,  $0<z'_2<1/\lambda^*_2<z_2$, and
\be\label{defB}
B=\frac{e^{\lambda^*_1z_1}}{z_1}=\frac{e^{\lambda^*_1z'_1}}{z'_1}=\frac{e^{\lambda^*_2z_2}}{z_2}=\frac{e^{\lambda^*_2z'_2}}{z'_2}
\ee
(notice that the smallest positive root $z'_i$ of each equation $B=e^{\lambda^*_iz}/z$ actually does not appear in~\eqref{cu21}, but it will play a role in the proof of Lemma~\ref{la:c1} below; observe also that $e^{\lambda^*_iz}/z\le B$ for $z\in[z'_1,z_i]$, and that $z_i>z'_i+S>S$, for $i=1,2$);
\item $z_0>\max\{z_1,z_2\}$ such that
\be\label{defz0}
b(a-1)Bz\big(e^{-\lambda^*_1z}+e^{-\lambda^*_2z}\big)\le e^{-\lambda_0(z-z_0)}\ \hbox{ for all $z\ge z_0$}
\ee
(the choice of $z_0$ is possible since $0<\lambda_0<\min\{\lambda^*_1,\lambda^*_2\}$);
\item $p_3>0$ large enough such that $z_3:=\{p_3/[(a-1)B]\}^2>z_0$ and
\be\label{p3}
p_3>\frac{8r_1B(a\!-\!1)\!\times\!\displaystyle\mathop{\max}_{z\ge0}\left\{(z\!+\!S)^{3/2}[(a\!-\!1)Bz^2(e^{-\lambda^*_1z}\!+\!ke^{-\lambda^*_2z})\!+\!aze^{-\ld_0(z\!-\!z_0)}]\right\}}{\displaystyle d_1\int_\bR J_1(y)y^2e^{\lambda^*_1y}dy};
\ee
\item $p_4>0$ large enough such that $z_4:=\{p_4/[(a-1)B]\}^2>z_0$ and
\be\label{p4}
p_4\!>\!\frac{8r_2B(a\!-\!1)\!\times\!\displaystyle\mathop{\max}_{z\ge0}\left\{(z\!+\!S)^{3/2}[(a\!-\!1)Bz^2(e^{-\lambda^*_2z}\!+\!he^{-\lambda^*_1z})\!+\!aze^{-\ld_0(z\!-\!z_0)}]\right\}}{\displaystyle d_2\int_\bR J_2(y)y^2e^{\lambda^*_2y}dy}.
\ee
\end{itemize}
It is straightforward to check that the functions $\uphi_i$ and $\ophi_i$ are continuous in $\R$, nonnegative, bounded, of class $C^1$ except at finitely many points, and that $\uphi_i\le\ophi_i$ in $\R$, for $i=1,2,3$.

\begin{lemma}\label{la:c1}
Under the above assumptions and notations, there exists a positive solution $\phitri$ of~\eqref{TWS2} such that $\uphi_i\le\phi_i\le\ophi_i$ in $\bR$, $i=1,2,3$.
\end{lemma}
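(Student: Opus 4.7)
My plan is to apply Lemma~\ref{luslem} directly. The two triples defined in~\eqref{cu21} are continuous, nonnegative, bounded, of class $C^1$ except at finitely many points, and satisfy $\uphi_i\le\ophi_i$ in $\R$ for $i=1,2,3$, so the whole matter reduces to verifying the six inequalities~\eqref{u1}--\eqref{l3} of Definition~\ref{lus}. Once that is done, Lemma~\ref{luslem} produces a solution squeezed between the pair, and positivity of every component follows from the strong-maximum-principle argument recalled after Proposition~\ref{prop:plus}. As a preliminary I fix $\ep>0$ small enough so that~\eqref{a2} holds (after translating $\alpha$ if needed) and $r_3\ep e^{-\rho z_0}<-g_3(\ld_0)$.

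The upper-solution inequalities are essentially bookkeeping. \eqref{u3} is immediate from $\ophi_3\equiv 1$, $\alpha\le1$, and $\uphi_1,\uphi_2\ge0$. For~\eqref{u1} (and symmetrically~\eqref{u2}), the case $z\le z_1$ is trivial since $\ophi_1\equiv a-1$ is the global maximum. On $z>z_1$, the key observation is that the compact-support assumption~\eqref{SS} combined with the spacing $z_1-z'_1>S$ built into~\eqref{defB} forces $z-y>z'_1$ whenever $J_1(y)>0$, and hence $\ophi_1(z-y)\le(a-1)B(z-y)e^{-\ld^*_1(z-y)}$ on the whole convolution range. Substituting this bound and invoking the critical-speed identities $g_1(\ld^*_1)=0$ and $d_1I_1'(\ld^*_1)=s$ from~\eqref{gi-d}, all $z$-growing contributions cancel and one obtains $d_1\bN_1[\ophi_1](z)+s\ophi_1'(z)\le -r_1(a-1)^2Bze^{-\ld^*_1z}$, which exactly absorbs the reaction bound $r_1\ophi_1[-1-\ophi_1-k\uphi_2+a\ophi_3]\le r_1(a-1)^2Bze^{-\ld^*_1z}$ deduced from $\ophi_3=1$.

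Inequality~\eqref{l3} is trivial on $\{z\le z_0\}$ where $\uphi_3=0$. On $\{z>z_0\}$, the global bound $\uphi_3(w)\ge 1-e^{-\ld_0(w-z_0)}$ yields $d_3\bN_3[\uphi_3](z)+s\uphi_3'(z)\ge-g_3(\ld_0)e^{-\ld_0(z-z_0)}>0$; the choice~\eqref{defz0} absorbs the predator terms $b\ophi_1+b\ophi_2$ against $1-\uphi_3$, and the residual contribution $-r_3\ep e^{-\rho z}$ coming from~\eqref{a2} is dominated thanks to $\rho>\ld_0$ and the choice of $\ep$.

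The main obstacle is~\eqref{l1} (and, symmetrically,~\eqref{l2}). I would apply the classical critical-speed trick: write $\uphi_1=\phi^{(1)}-\phi^{(2)}$ on $[z_3,\infty)$ with $\phi^{(1)}(z):=(a-1)Bze^{-\ld^*_1z}$ and $\phi^{(2)}(z):=p_3z^{1/2}e^{-\ld^*_1z}$, and consider the linearized operator $L_1[\phi]:=d_1\bN_1[\phi]+s\phi'+r_1(a-1)\phi$. At the critical speed $\ld^*_1$ is a \emph{double} root of $g_1$ (since also $d_1I_1'(\ld^*_1)=s$), so $L_1$ annihilates both $e^{-\ld^*_1z}$ and $ze^{-\ld^*_1z}$, giving $L_1[\phi^{(1)}]\equiv 0$. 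For $\phi^{(2)}$, the Lagrange form of the Taylor remainder, valid uniformly on $|y|\le S$ thanks to $z\ge z_3>S$, gives $(z-y)^{1/2}\le z^{1/2}-y/(2z^{1/2})-y^2/[8(z+S)^{3/2}]$. Integrating against $J_1(y)e^{\ld^*_1y}$ and using $g_1(\ld^*_1)=0$, $d_1I_1'(\ld^*_1)=s$ to kill the $z^{1/2}$ and $z^{-1/2}$ terms, I obtain
$$L_1[\phi^{(2)}](z)\le -\frac{p_3\,d_1\,I_1''(\ld^*_1)}{8(z+S)^{3/2}}\,e^{-\ld^*_1z},\qquad z\ge z_3,$$
with $d_1I_1''(\ld^*_1)=d_1\int_\R J_1(y)y^2e^{\ld^*_1y}dy>0$. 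Hence $L_1[\uphi_1]\ge p_3d_1I_1''(\ld^*_1)(z+S)^{-3/2}e^{-\ld^*_1z}/8$ on $[z_3,\infty)$. Writing $\mL_1(z)=L_1[\uphi_1](z)-r_1\uphi_1(z)[\uphi_1(z)+k\ophi_2(z)+a(1-\uphi_3(z))]$ and bounding the bracket above by $(a-1)Bze^{-\ld^*_1z}+k(a-1)Bze^{-\ld^*_2z}+ae^{-\ld_0(z-z_0)}$, the precise choice~\eqref{p3} of $p_3$ is calibrated exactly so that the positive term dominates this quadratic correction uniformly on $[0,\infty)$, and hence on $[z_3,\infty)$, yielding $\mL_1\ge0$. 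The computation for $\mL_2$ is identical, with $p_4$ and~\eqref{p4} replacing $p_3$ and~\eqref{p3}.
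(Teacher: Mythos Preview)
Your proof is correct and follows essentially the same approach as the paper: both verify the six differential inequalities of Definition~\ref{lus} by exploiting the critical-speed identities $g_i(\lambda^*_i)=0$ and $s=d_iI_i'(\lambda^*_i)$, the compact-support spacing $z_1-z'_1>S$ to control the convolution range, and the algebraic bound $z^{1/2}-(z-y)^{1/2}-y/(2z^{1/2})\ge y^2/[8(z+S)^{3/2}]$ for the subsolution estimate. Your organization via the linearized operator $L_1[\phi]=d_1\bN_1[\phi]+s\phi'+r_1(a-1)\phi$ and the decomposition $\uphi_1=\phi^{(1)}-\phi^{(2)}$ (emphasizing that $\lambda^*_1$ is a double root, so $L_1$ kills both $e^{-\lambda^*_1 z}$ and $ze^{-\lambda^*_1 z}$) is a slightly cleaner packaging of exactly the same computation the paper carries out directly in terms of $A_1(z)$ and $A_2(z)$; the final comparison against~\eqref{p3} is identical in both.
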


\begin{proof}
We choose $\ep>0$ small enough such that
\be\label{ep22}
e^{\lambda_0z_0}g_3(\ld_0)+\ep r_3<0.
\ee
Also, up to a translation, condition \eqref{a2} holds for this $\ep$.

We now verify \eqref{u1}-\eqref{l3} for $(\overline{\phi}_1,\overline{\phi}_2,\overline{\phi}_3)$ and $(\underline{\phi}_1,\underline{\phi}_2,\underline{\phi}_3)$ defined in~\eqref{cu21}, for all $z\in\bR$ except finitely many points. First of all, it is trivial that~\eqref{u1} and~\eqref{u2} hold for $z<z_1$ and $z<z_2$ respectively, since $\ophi_1=a-1$ in $(-\infty,z_1)$, $\ophi_2=a-1$ in $(-\infty,z_2)$, and since $\ophi_1\le a-1$, $\ophi_2\le a-1$, $\ophi_3=1$, $\uphi_1\ge0$ and $\uphi_2\ge0$ in $\R$. For $z>z_1$, we have $\ophi_1(z)=(a-1)Bze^{-\lambda^*_1z}$ and, since $z-y>z_1-S>z'_1$ for all $y\in[-S,S]$, we also have
$$\ophi_1(z-y)\le (a-1)B(z-y)e^{-\lambda^*_1(z-y)}\ \hbox{ for all }y\in[-S,S]$$
(indeed, if $z-y>z_1$, we have equality in the above inequality, and if $z'_1<z-y\le z_1$, we have $e^{\lambda^*_1(z-y)}/(z-y)\le B$ and $\ophi_1(z-y)=a-1\le(a-1)B(z-y)e^{-\lambda^*_1(z-y)}$). Hence, for all $z>z_1$, using $J_1=0$ almost everywhere outside $[-S,S]$,
\be\label{N1ophi1}\baa{rcl}
\bN_1[\ophi_1](z)&\!\!\le\!\!& \displaystyle(a-1)B\left[\int_{-S}^SJ_1(y)(z-y)e^{-\lambda^*_1(z-y)}dy-ze^{-\lambda^*_1z}\right]\vspace{3pt}\\
&\!\!=\!\!&\displaystyle(a-1)B\left[\int_{\bR}J_1(y)(z-y)e^{-\lambda^*_1(z-y)}dy-ze^{-\lambda^*_1z}\right]\vspace{3pt}\\
&\!\!=\!\!&\displaystyle(a\!-\!1)Bze^{-\lambda^*_1z}\!\left[\int_\bR\! J_1(y)e^{\lambda^*_1y}dy-1\right]-(a\!-\!1)Be^{-\lambda^*_1z}\!\!\int_\bR J_1(y)ye^{\lambda^*_1y}dy.\eaa
\ee
This implies, using $\ophi_1(z)\ge 0$, $\uphi_2(z)\ge 0$ and $\ophi_3(z)=1$,
\be\label{U1ophi1}\baa{rcl}
\mU_1(z)&\le& \displaystyle(a-1)Bze^{-\lambda^*_1z}\left\{d_1\left[I_1(\lambda^*_1)-1\right]-s\lambda^*_1+r_1(a-1)\right\}\vspace{3pt}\\
&&\displaystyle-(a-1)Be^{-\lambda^*_1z}\left[d_1\int_\bR J_1(y)ye^{\lambda^*_1y}dy-s\right]=0,\eaa
\ee
because of~\eqref{gi-d} and $g_1(\lambda^*_1)=0$. Hence \eqref{u1} holds for all $z\neq z_1$. Similarly, we have $\mU_2(z)\le0$ for all $z>z_2$, because $g_2(\lambda^*_2)=0$ and~\eqref{SS2}-\eqref{gi-d} hold. Hence \eqref{u2} holds for all $z\neq z_2$. That \eqref{u3} holds for all $z\in\bR$ is trivial, since $\alpha(z)\le 1$, $\ophi_3(z)=1$, $\uphi_1(z)\ge0$ and $\uphi_2(z)\ge0$ for all $z\in\R$.

It remains to check~\eqref{l1}-\eqref{l3}. Since $\uphi_1=0$ in $(-\infty,z_3]$ and $\uphi_1\ge0$ in $\R$,~\eqref{l1} clearly holds for $z<z_3$. For $z>z_3\ (>0)$, we have $\uphi_1(z)=(a-1)Bze^{-\lambda^*_1z}-p_3z^{1/2}e^{-\lambda^*_1z}$. Note also from the definitions of $\uphi_1$ and $z_3$ that
$$\uphi_1(x)\ge (a-1)Bxe^{-\lambda^*_1x}-p_3x^{1/2}e^{-\lambda^*_1x}\ \hbox{ for all }x>0.$$
Then, using $z>z_3>z_0>z_1>S$, we have
$$\uphi_1(z-y)J_1(y)\ge\left[(a\!-\!1)B(z\!-\!y)e^{-\lambda^*_1(z\!-\!y)}\!-\!p_3(z\!-\!y)^{1/2}e^{-\lambda^*_1(z\!-\!y)}\right]\!J_1(y)\hbox{ for all }y\in[-S,S].$$
Using $J_1(y)=0$ for almost every $y\not\in[-S,S]$, it follows that, for all $z>z_3$,
\be\label{N1uphi1bis}\baa{rcl}
d_1\bN_1[\uphi_1](z)\!+\!s\uphi_1'(z)&\!\!\!\ge\!\!\!&(a-1)Bze^{-\lambda^*_1z}\{d_1[I_1(\lambda^*_1)-1]-s\lambda^*_1\}\vspace{3pt}\\
&\!\!\!\!\!\!&\displaystyle-(a-1)Be^{-\lambda^*_1z}\left\{ d_1\!\int_{\bR}\! J_1(y)ye^{\lambda^*_1y}dy-s\right\}+p_3d_1z^{1/2}e^{-\lambda^*_1z}\vspace{3pt}\\
&\!\!\!\!\!\!&\displaystyle-p_3e^{-\lambda^*_1z}\left\{d_1\!\int_\bR\! J_1(y)(z-y)^{1/2} e^{\lambda^*_1y}dy-s\lambda^*_1z^{1/2}+\frac{s}{2z^{1/2}} \right\}.\eaa
\ee
On the other hand, we have, for all $z>z_3\ (>z_0>\max\{z_1,z_2\})$,
\beaa
&&r_{1}\underline{\phi}_1(z)[-1-\underline{\phi}_1(z)-k\overline{\phi}_2(z)+a\underline{\phi}_3(z)]\\
&\ge& r_1(a-1)\uphi_1(z)-r_1\ophi_1^2(z)-r_1k\ophi_1(z)\ophi_2(z)-r_1a\ophi_1(z)e^{-\ld_0(z-z_0)}\\
&=&r_1(a-1)^2Bze^{-\lambda^*_1z}-r_1(a-1)p_3z^{1/2}e^{-\lambda^*_1z}-r_1(a-1)^2B^2z^2(e^{-2\lambda^*_1z}+ke^{-(\lambda^*_1+\lambda^*_2)z})\\
& &-r_1a(a-1)Bze^{-\lambda^*_1z-\ld_0(z-z_0)}.
\eeaa
Hence we deduce from $g_1(\lambda^*_1)=0$ and \eqref{gi-d} that, for all $z>z_3$,
$$\mL_1(z)\ge e^{-\lambda^*_1z}[p_3A_1(z)-A_2(z)],$$
where
\be\label{defA12}\begin{cases}
\displaystyle A_1(z):=d_1\!\left[z^{1/2}-\!\int_\bR\! J_1(y)(z-y)^{1/2}e^{\lambda^*_1y}dy\right]+s\lambda^*_1z^{1/2}-\frac{s}{2z^{1/2}}-r_1(a-1)z^{1/2},\vspace{3pt}\\
A_2(z):=r_1(a-1)^2B^2z^2(e^{-\lambda^*_1z}+ke^{-\lambda^*_2z})+r_1a(a-1)Bze^{-\ld_0(z-z_0)}.\end{cases}
\ee
Now, using $g_1(\lambda^*_1)=0$ and \eqref{gi-d} again, we may rewrite $A_1$ as
$$A_1(z)\!=\!d_1\!\!\int_\bR\!\! J_1(y)\!\!\left[z^{1/2}\!\!-\!(z\!-\!y)^{1/2}\!\!-\!\frac{y}{2z^{1/2}}\right]\!e^{\lambda^*_1y}dy\!=\!d_1\!\!\int_{-S}^S\!\!J_1(y)\!\!\left[z^{1/2}\!\!-\!(z\!-\!y)^{1/2}\!\!-\!\frac{y}{2z^{1/2}}\right]\!e^{\lambda^*_1y}dy.$$
Since
$$z^{1/2}-(z-y)^{1/2}-\frac{y}{2z^{1/2}}=\frac{y^2}{2z^{1/2}(z^{1/2}+(z-y)^{1/2})^2}\ge\frac{y^2}{8(z+S)^{3/2}}\ \mbox{ for $y\in[-S,S]$,}$$
we obtain that
\be\label{ineqA1}
A_1(z)\ge \frac{d_1}{8(z+S)^{3/2}}\int_\bR J_1(y)y^2e^{\lambda^*_1y}dy\ \hbox{ for all }z>z_3.
\ee
Therefore, $\mL_1(z)\ge 0$ for all $z>z_3$, by the choice of $p_3$ in \eqref{p3}. Hence \eqref{l1} holds for all $z\neq z_3$. Similarly, \eqref{l2} holds for all $z\neq z_4$, using especially $g_2(\lambda^*_2)=0$ together with~\eqref{gi-d} and~\eqref{p4}.

Finally, \eqref{l3} holds trivially for $z<z_0$. For $z>z_0$, using $z_0>\max\{z_1,z_2\}>0$ together with~\eqref{alpha1},~\eqref{a2},~\eqref{defz0} and $\ld_0<\rho$, we have
$$\baa{rcl}
\alpha(z)\!-\!b\ophi_1(z)\!-\!b\ophi_2(z)\!-\!\uphi_3(z) & \!\!\!\ge\!\!\! & 1\!-\!\ep e^{-\rho z}\!-\!b(a\!-\!1)Bz(e^{-\lambda^*_1z}\!+\!e^{-\lambda^*_2z})\!-\!1\!+\!e^{-\ld_0(z\!-\!z_0)}\vspace{3pt}\\
& \!\!\!\ge\!\!\! & -\ep e^{-\rho z}\ge-\ep e^{-\lambda_0 z}.\eaa$$
It follows that, for all $z>z_0$,
\be\label{mL3}\baa{rcl}
\mL_3(z)&\ge& -e^{-\ld_0(z-z_0)}\left\{d_3[I_3(\ld_0)-1]-s\ld_0\right\}-r_3\ep[1-e^{-\ld_0(z-z_0)}]e^{-\lambda_0 z}\vspace{3pt}\\
&\ge& -e^{-\ld_0
z}[e^{\lambda_0z_0}g_3(\lambda_0)+r_3\ep]\ge 0,\eaa
\ee
due to \eqref{ep22}. Hence \eqref{l3} holds for all $z\neq z_0$.

Since $0\le\uphi_i\le\ophi_i$ in~$\bR$ for $i=1,2,3$, and each function $\uphi_i$ is nontrivial, the conclusion of Lemma~\ref{la:c1} follows from Lemma~\ref{luslem}, as at the end of the proof of Lemma~\ref{la:1p}.
\end{proof}

Clearly, the solution $\phitri$ given in Lemma~\ref{la:c1} satisfies $\phitri(+\infty)=(0,0,1)$. Together with Proposition~\ref{prop:plus}, Theorem~\ref{th:c1} follows.


\subsection{Waves connecting $(0,0,0)$ and $E_1=(0,0,1)$ in the case $s^*_1>s^*_2$: proof of Theorem~\ref{th:c12}}

In this subsection, in addition to~\eqref{SS}, we assume that $s^*_1>s^*_2$, with $s^*_i>0$ defined in~\eqref{defs*i}, and we consider the critical speed
$$s=s^*_1.$$
We recall that $\lambda^*_1>0$ is the unique minimum of $Q_1$ in $(0,\hat\lambda_1)=(0,+\infty)$. Similarly to~\eqref{gi-d}, we also have
\be\label{gi-d1}
s=s^*_1=d_1\int_{\bR}J_1(y)ye^{\lambda^*_1y}dy.
\ee
Since $s=s^*_1>s^*_2$, with $g_2$ given by~\eqref{defgi} in $[0,\hat\lambda_2)$, it then follows from~\eqref{c1} and (J1)-(J3) (and the comments after~(J1)-(J3)) that there exist $0<\ld_3<\ld_4<\hat\lambda_2$ such that
$$g_2(\ld_3)=g_2(\ld_4)=0,$$
and $g_2(\lambda)<0$ if and only if $\lambda\in(\lambda_3,\lambda_4)$. With $g_3$ given by~\eqref{defg3} in $[0,\hat\lambda_3)$, we have $g_3(0)=0$ and $g_3'(0)=-s<0$. Hence we can choose a constant $\ld_0>0$ such that
\be\label{deflambda0bis}
0<\ld_0<\min\{\lambda^*_1,\ld_3,\rho,\hat\lambda_3\}\ \hbox{ and }\ g_3(\ld_0)<0,
\ee
where $\rho>0$ is as in~($\alpha 2$). We also fix a real number $\mu_3$ such that
$$\lambda_3<\mu_3<\min\{\lambda_4,\lambda_3+\lambda_0\}.$$

We then define
\be\label{cu212}\left\{\baa{l}
\ophi_1(z)=\begin{cases}(a-1)Bze^{-\ld^*_1z}, & z>z_1,\\ a-1, & z\le z_1,\end{cases}\vspace{5pt}\\
\uphi_1(z)=\begin{cases}(a-1)Bze^{-\lambda^*_1z}-p_5z^{1/2}e^{-\lambda^*_1z}, & z>z_5,\\ 0, & z\le z_5,\end{cases}\vspace{5pt}\\
\ophi_2(z)=\min\{(a-1)e^{-\lambda_3z},a-1\},\ \ \ \ \uphi_2(z)=\max\{(a-1)e^{-\lambda_3z}-p_6e^{-\mu_3z},0\},\vspace{5pt}\\
\ophi_3(z)=1,\qquad\qquad\qquad\qquad\qquad\ \ \ \ \,\uphi_3(z)=\max\{1-e^{-\lambda_0(z-z_0)},0\},\eaa\right.
\ee
where the parameters $B,z_1,z_0,p_5,z_5,p_6$ are chosen in the following order:
\begin{itemize}
\item $B>\lambda^*_1e$ large enough such that $z_1-z'_1>S$, where $S$ is as in~\eqref{SS} and $0<z'_1<1/\lambda^*_1<z_1$ are defined by
$$B=\frac{e^{\lambda^*_1z_1}}{z_1}=\frac{e^{\lambda^*_1z'_1}}{z'_1};$$
\item $z_0>z_1$ such that
\be\label{defz0bis}
b(a-1)Bze^{-\lambda^*_1z}+b(a-1)e^{-\lambda_3z}\le e^{-\lambda_0(z-z_0)}\ \hbox{ for all $z\ge z_0$}
\ee
(the choice of $z_0$ is possible since $0<\lambda_0<\min\{\lambda_3,\lambda^*_1\}$);
\item $p_5>0$ large enough such that $z_5:=\{p_5/[(a-1)B]\}^2>z_0$ and
\be\label{p5}
p_5\!>\!\frac{8r_1(a\!-\!1)B\!\times\!\displaystyle\mathop{\max}_{z\ge0}\left\{\!(z\!+\!S)^{3/2}[(a\!-\!1)Bz^2e^{-\lambda^*_1z}\!+\!k(a\!-\!1)ze^{-\lambda_3z}\!+\!aze^{-\ld_0(z\!-\!z_0)}]\right\}}{\displaystyle d_1\int_\bR J_1(y)y^2e^{\lambda^*_1y}dy};
\ee
\item $p_6>0$ large enough such that $z_6:=(\mu_3-\lambda_3)^{-1}\ln(p_6/(a-1))>z_0$ and
\be\label{p6}
p_6>\max\left\{a-1,\frac{r_2(a-1)P_6}{-g_2(\mu_3)}\right\},
\ee
with
$$P_6:=\mathop{\max}_{z\ge0}\left\{h(a\!-\!1)Bze^{-(\lambda^*_1+\lambda_3-\mu_3)z}\!+\!(a\!-\!1)e^{-(2\lambda_3-\mu_3)z}\!+\!ae^{-\lambda_3z-\lambda_0(z-z_0)+\mu_3z}\right\}$$
(the choice of $p_6$ is possible since $\mu_3<\lambda_3+\lambda_0<\min\{2\lambda_3,\lambda^*_1+\lambda_3\}$).
\end{itemize}
It is straightforward to check that the functions $\uphi_i$ and $\ophi_i$ are continuous in $\R$, nonnegative, bounded, of class $C^1$ except at finitely many points, and that $\uphi_i\le\ophi_i$ in $\R$, for $i=1,2,3$.

\begin{lemma}\label{la:c12}
Under the above assumptions and notations, there exists a positive solution $\phitri$ of \eqref{TWS2} such that $\uphi_i\le\phi_i\le\ophi_i$ in $\bR$, $i=1,2,3$.
\end{lemma}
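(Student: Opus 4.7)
The plan is to verify that the functions defined in~\eqref{cu212} form a pair of upper and lower solutions of~\eqref{TWS2} in the sense of Definition~\ref{lus}, and then invoke Lemma~\ref{luslem}. The ansatz is a hybrid: the first component carries the critical-speed profile of Lemma~\ref{la:c1} (since $s=s_1^*$), while the second component carries the super-critical exponential profile of Lemma~\ref{la:1p} (since $s>s_2^*$). To prepare the argument I would first fix $\varepsilon>0$ small enough so that $e^{\lambda_0z_0}g_3(\lambda_0)+\varepsilon r_3<0$ and, up to a translation, assume~\eqref{a2} holds for this $\varepsilon$.

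For $\mU_1(z)\le0$ and $\mL_1(z)\ge0$ (for $z\neq z_1$ and $z\neq z_5$), I would copy the critical-speed computation from Lemma~\ref{la:c1}, using~\eqref{SS}, the definition of $B$, and the ordering $z_5>z_0>z_1>S$: the bounds~\eqref{N1ophi1} on $\bN_1[\ophi_1]$ and~\eqref{N1uphi1bis} on $\bN_1[\uphi_1]$ remain valid, and the identities $g_1(\lambda^*_1)=0$ and~\eqref{gi-d1} make the leading terms cancel. What survives in $\mL_1$ is the positive contribution $p_5e^{-\lambda^*_1z}A_1(z)$ with $A_1(z)\ge d_1/(8(z+S)^{3/2})\int_\bR J_1(y)y^2e^{\lambda^*_1y}dy$ as in~\eqref{defA12}-\eqref{ineqA1}, and the choice of $p_5$ in~\eqref{p5} is designed exactly so that this term dominates the three cross contributions coming from $-1-\uphi_1-k\ophi_2+a\uphi_3$ (the self quadratic of order $e^{-2\lambda^*_1z}$, the competition term of order $e^{-(\lambda^*_1+\lambda_3)z}$, and the prey-defect term of order $e^{-\lambda^*_1z-\lambda_0(z-z_0)}$).

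For $\mU_2(z)\le0$ and $\mL_2(z)\ge0$ (for $z\neq0$ and $z\neq z_6$), I would mimic the super-critical computation of Lemma~\ref{la:1p}: using $g_2(\lambda_3)=0$ and $g_2(\mu_3)<0$ (which holds since $\lambda_3<\mu_3<\lambda_4$), the bound
$$\mL_2(z)\ge e^{-\mu_3z}\bigl\{-p_6\,g_2(\mu_3)-r_2(a-1)\,P_6\bigr\}>0\qquad\text{for all }z>z_6$$
follows from~\eqref{p6}; the finiteness of the supremum defining $P_6$ relies on the chain $\mu_3<\lambda_3+\lambda_0<\min\{2\lambda_3,\lambda^*_1+\lambda_3\}$, which in turn uses $\lambda_0<\min\{\lambda_3,\lambda^*_1\}$ from~\eqref{deflambda0bis}. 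The inequality $\mU_3(z)\le0$ is immediate from $\alpha\le1$ and the nonnegativity of $\uphi_1,\uphi_2$, while $\mL_3(z)\ge0$ is handled exactly as in Lemma~\ref{la:c1} (trivial for $z<z_0$; for $z>z_0$ using~\eqref{defz0bis}, $\lambda_0<\rho$, and the choice of $\varepsilon$).

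Once the upper-lower-solution pair is checked, Lemma~\ref{luslem} delivers a nonnegative bounded solution $\phitri$ squeezed between them, and each component is positive by the strong-maximum-principle argument recalled after Proposition~\ref{prop:plus}. I expect the main obstacle to be the bookkeeping in~\eqref{l1}: the critical-speed correction $-p_5z^{1/2}e^{-\lambda^*_1z}$ has to absorb terms at three distinct decay rates ($2\lambda^*_1$, $\lambda^*_1+\lambda_3$, and $\lambda^*_1+\lambda_0$), and the polynomial factor $(z+S)^{3/2}$ appearing in the numerator of~\eqref{p5} has to match the order of the lower bound~\eqref{ineqA1}; the positivity of $\lambda_3$ and $\lambda_0$ is what guarantees the maximum in~\eqref{p5} is finite and makes the hybrid construction go through.
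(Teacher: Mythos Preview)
Your proposal is correct and follows essentially the same route as the paper: fix $\varepsilon$ so that~\eqref{ep22} holds and~\eqref{a2} is satisfied after translation, verify \eqref{u1} and \eqref{l1} via the critical-speed estimates of Lemma~\ref{la:c1} (the paper renames your $A_1$ as $A_3$ but it is the same function), verify \eqref{u2} and \eqref{l2} via the super-critical exponential estimates of Lemma~\ref{la:1p} combined with~\eqref{p6}, and handle \eqref{u3}-\eqref{l3} exactly as you describe. Your identification of the three cross terms in $\mL_1$ and the role of the chain $\mu_3<\lambda_3+\lambda_0<\min\{2\lambda_3,\lambda^*_1+\lambda_3\}$ matches the paper's bookkeeping precisely.
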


\begin{proof}
As in the beginning of the proof of Lemma~\ref{la:c1}, we first choose $\ep>0$ small enough such that~\eqref{ep22} holds, as well as condition~\eqref{a2}, up to a translation.

We now verify \eqref{u1}-\eqref{l3} for $(\overline{\phi}_1,\overline{\phi}_2,\overline{\phi}_3)$ and $(\underline{\phi}_1,\underline{\phi}_2,\underline{\phi}_3)$ defined in \eqref{cu212}, for all $z\in\bR$ except finitely many points. Firstly, it is trivial that~\eqref{u1} holds for $z<z_1$, since $\ophi_1\le a-1$, $\ophi_3=1$, $\uphi_1\ge0$ and $\uphi_2\ge0$ in $\R$. Secondly, it follows from~\eqref{gi-d1} and $g_1(\lambda^*_1)=0$ that~\eqref{u1} holds for all $z>z_1$, doing as in~\eqref{N1ophi1}-\eqref{U1ophi1} in the proof of Lemma~\ref{la:c1}. Hence \eqref{u1} holds for all $z\neq z_1$. Thirdly, as in~\eqref{N1ophi11}-\eqref{N1ophi13} in the proof of Lemma~\ref{la:1p}, we have $\mU_2(z)\le 0$, i.e.~\eqref{u2}, for all $z\neq 0$, using $g_2(\ld_3)=0$. Fourthly, that \eqref{u3} holds for all $z\in\bR$ is trivial, since $\alpha\le 1$, $\uphi_1\ge0$ and $\uphi_2\ge0$ in $\R$.

It remains to show~\eqref{l1}-\eqref{l3}. Since $\uphi_1(z)=0$ for all $z\le z_5$ and $\uphi_1\ge0$ in $\R$,~\eqref{l1} clearly holds for $z<z_5$. For $z>z_5\ (>z_0>0)$, we have $\uphi_1(z)=(a-1)Bze^{-\lambda^*_1z}-p_5z^{1/2}e^{-\lambda^*_1z}$. Note also from the definitions of $\uphi_1$ and $z_5$ that
$$\uphi_1(x)\ge (a-1)Bxe^{-\lambda^*_1x}-p_5x^{1/2}e^{-\lambda^*_1x}\ \hbox{ for all }x>0.$$
Then, using $z>z_5>z_0>z_1>S$, we have
$$\uphi_1(z-y)J_1(y)\ge\left[(a\!-\!1)B(z\!-\!y)e^{-\lambda^*_1(z\!-\!y)}\!-\!p_5(z\!-\!y)^{1/2}e^{-\lambda^*_1(z\!-\!y)}\right]\!J_1(y)\hbox{ for all }y\in[-S,S].$$
Using $J_1(y)=0$ for almost every $y\not\in[-S,S]$, it follows that, for all $z>z_5$,
\beaa
d_1\bN_1[\uphi_1](z)+s\uphi_1'(z)&\!\!\!\ge\!\!\!&(a-1)Bze^{-\lambda^*_1z}\{d_1[I_1(\lambda^*_1)-1]-s\lambda^*_1\}\\
&\!\!\!\!\!\!&-(a-1)Be^{-\lambda^*_1z}\left\{d_1\!\int_{\bR}\! J_1(y)ye^{\lambda^*_1y}dy-s\right\}+p_5d_1z^{1/2}e^{-\lambda^*_1z}\\
&\!\!\!\!\!\!&-p_5e^{-\lambda^*_1z}\left\{ d_1\!\int_\bR\! J_1(y)(z-y)^{1/2} e^{\lambda^*_1y}dy-s\lambda^*_1z^{1/2}+\frac{s}{2z^{1/2}} \right\}.
\eeaa
On the other hand, we have, for all $z>z_5\ (>z_0>z_1>0)$,
\beaa
&&r_{1}\underline{\phi}_1(z)[-1-\underline{\phi}_1(z)-k\overline{\phi}_2(z)+a\underline{\phi}_3(z)]\\
&\ge& r_1(a-1)\uphi_1(z)-r_1\ophi_1^2(z)-r_1k\ophi_1(z)\ophi_2(z)-r_1a\ophi_1(z)e^{-\ld_0(z-z_0)}\\
&=&r_1(a-1)^2Bze^{-\lambda^*_1z}-r_1(a-1)p_5z^{1/2}e^{-\lambda^*_1z}\\
& & -r_1(a\!-\!1)^2B^2z^2e^{-2\lambda^*_1z}\!-\!r_1k(a\!-\!1)^2Bze^{-(\lambda^*_1\!+\!\lambda_3)z}\!-\!r_1a(a\!-\!1)Bze^{-\lambda^*_1z\!-\!\ld_0(z\!-\!z_0)}.
\eeaa
Hence we deduce from $g_1(\lambda^*_1)=0$ and \eqref{gi-d1} that $\mL_1(z)\ge e^{-\lambda^*_1z}[p_5A_3(z)-A_4(z)]$ for all $z>z_5$, where
$$\begin{cases}
\displaystyle A_3(z):=d_1\left[z^{1/2}-\int_\bR J_1(y)(z-y)^{1/2}e^{\lambda^*_1y}dy\right]+s\lambda^*_1z^{1/2}-\frac{s}{2z^{1/2}}-r_1(a-1)z^{1/2},\vspace{3pt}\\
A_4(z):=r_1(a-1)^2B^2z^2e^{-\lambda^*_1z}+r_1k(a-1)^2Bze^{-\lambda_3z}+r_1a(a-1)Bze^{-\ld_0(z-z_0)}.\end{cases}$$
Since $A_3$ has the same expression as $A_1$ in~\eqref{defA12}, we can proceed as in the proof of~\eqref{ineqA1}, using $g_1(\lambda^*_1)=0$ and \eqref{gi-d1} again, and we obtain that
$$A_3(z)\ge \frac{d_1}{8(z+S)^{3/2}}\int_\bR J_1(y)y^2e^{\lambda^*_1y}dy\ \hbox{ for }z>z_5.$$
Then, $\mL_1(z)\ge 0$ for $z>z_5$, by the choice of $p_5$ in \eqref{p5}. Hence,~\eqref{l1} holds for all $z\neq z_5$.

Now, as in~\eqref{N1uphi1} in the proof of Lemma~\ref{la:1p}, we have, for every
$$z>z_6=\frac{1}{\mu_3-\lambda_3}\ln\Big(\frac{p_6}{a-1}\Big)>z_0>z_1>0,$$
one has
$$d_2\bN_2[\uphi_2](z)\!+\!s\uphi_2'(z)\!\ge\!(a\!-\!1)e^{-\ld_3 z}\{d_2[I_2(\ld_3)\!-\!1]\!-\!s\ld_3\}\!-\!p_6e^{-\mu_3 z}\{d_2[I_2(\mu_3)\!-\!1]\!-\!s\mu_3\},$$
while
$$\baa{l}
r_2\uphi_2(z)[-1-h\ophi_1(z)-\uphi_2(z)+a\uphi_3(z)]\vspace{3pt}\\
\ \ \ge r_2(a\!-\!1)\big[(a\!-\!1)e^{-\ld_3 z}\!-\!p_6e^{-\mu_3 z}\!-\!h(a\!-\!1)Bze^{-(\lambda^*_1\!+\!\ld_3)z}\!\!-\!(a\!-\!1)e^{-2\ld_3 z}\!\!-\!ae^{-\ld_3 z\!-\!\lambda_0(z\!-\!z_0)}\big].\eaa$$
Hence we obtain from $g_2(\ld_3)=0$ and \eqref{p6} that
\beaa
\mL_2(z)\ge e^{-\mu_3 z}\{-p_6g_2(\mu_3)-r_2(a-1)P_6\}\ge 0\ \hbox{ for all }z>z_6,
\eeaa
Together with the fact that~\eqref{l2} holds trivially for $z<z_6$, we obtain~\eqref{l2} for all $z\neq z_6$.

Finally, \eqref{l3} holds trivially for $z<z_0$. For $z>z_0$, using $z_0>z_1>0$ together with~\eqref{alpha1},~\eqref{a2},~\eqref{defz0bis} and $\ld_0<\rho$, we have
$$\baa{rcl}
\alpha(z)\!-\!b\ophi_1(z)\!-\!b\ophi_2(z)\!-\!\uphi_3(z) & \!\!\!\ge\!\!\! & 1\!-\!\ep e^{-\rho z}\!-\!b(a\!-\!1)Bze^{-\lambda^*_1z}\!-\!b(a\!-\!1)e^{-\lambda_3z}\!-\!1\!+\!e^{-\ld_0(z\!-\!z_0)}\vspace{3pt}\\
& \!\!\!\ge\!\!\! & -\ep e^{-\rho z}\ \ge\ -\ep e^{-\lambda_0 z}.\eaa$$
With~\eqref{ep22}, it follows as in~\eqref{mL3} that $\mL_3(z)\!\ge\!0$ for all $z\!>\!z_0$. Hence \eqref{l3} holds for all $z\!\neq\!z_0$.

Since $0\le\uphi_i\le\ophi_i$ in~$\bR$ for $i=1,2,3$, and each function $\uphi_1$ is nontrivial, the conclusion of Lemma~\ref{la:c1} follows from Lemma~\ref{luslem}, as at the end of the proof of Lemma~\ref{la:1p}.
\end{proof}

Clearly, the solution $\phitri$ given in Lemma~\ref{la:c12} satisfies $\phitri(+\infty)=(0,0,1)$. Together with Proposition~\ref{prop:plus}, Theorem~\ref{th:c12} follows.


\subsection{Waves connecting $(0,0,0)$ and $E_2=(u_p,0,w_p)$: proof of Theorem~\ref{th:c2}}

In this subsection, in addition to~\eqref{SS}, we assume that $d_1=d_2=d_3$, $J_1=J_2=J_3$ in $\R$, that~\eqref{re2} is satisfied, and that $\rho\ge\lambda^{**}_2$, where $\rho>0$ is as in~$($$\alpha 2$$)$ and $\lambda^{**}_2>0$ denotes the unique minimum of the function $R_2$ defined by~\eqref{defs**2} in $(0,\hat\lambda_2)=(0,+\infty)$. We here consider the critical speed
$$s=s^{**}_2,$$
where $s^{**}_2>0$ is defined in~\eqref{defs**2} too. With $\beta_2$ and $G_2$ defined in~\eqref{defbeta2} and~\eqref{defG2} with $s=s^{**}_2$, one has $G_2(\lambda^{**}_2)=0$, and $\lambda^{**}_2$ is the unique positive root of this equation. Similarly to~\eqref{gi-d}, since $R'_2(\lambda^{**}_2)=0$, we also have
\be\label{gg-d}
s=s^{**}_2=d_2\int_{\bR}J_2(y)ye^{\lambda^{**}_2y}dy.
\ee

Recalling $A=a-1-u_p>0$ as in~\eqref{defA} and $w_p+bu_p=1$, we then define
\be\label{def3}\left\{\baa{l}
\ophi_1(z)=\bss u_p+ABze^{-\lambda^{**}_2z}, & z>z_1,\\ a-1, & z\le z_1,\ess\qquad\uphi_1(z)=\bss u_p-u_pBze^{-\lambda^{**}_2 z}, & z>z_1,\\ 0, & z\le z_1,\ess\vspace{3pt}\\
\ophi_2(z)=\bss (a-1)Bze^{-\lambda^{**}_2 z}, & z>z_1,\\ a-1, & z\le z_1,\ess\vspace{3pt}\\
\uphi_2(z)=\bss (a-1)Bze^{-\lambda^{**}_2 z}-q^*z^{1/2}e^{-\lambda^{**}_2 z}, & z>z^*,\\ 0, & z\le z^*,\ess\vspace{3pt}\\
\ophi_3(z)=\bss w_p+bu_pBze^{-\lambda^{**}_2z}, & z>z_1,\\ 1, & z\le z_1,\ess\quad\uphi_3(z)=\bss w_p-w_pBze^{-\lambda^{**}_2 z}, & z>z_1,\\ 0, & z\le z_1,\ess\eaa\right.
\ee
where the parameters $B,q^*,z^*$ are chosen as follows:
\begin{itemize}
\item $B>\lambda^{**}_2 e$ large enough such that $z_1-z_2>S$, where $S$ is as in~\eqref{SS2} and $0<z_2<1/\lambda^{**}_2<z_1$ are such that
$$B=\frac{e^{\lambda^{**}_2z_1}}{z_1}=\frac{e^{\lambda^{**}_2z_2}}{z_2};$$
\item $q^*>0$ large enough such that $z^*:=\{q^*/[(a-1)B]\}^2>z_1$ and
\be\label{qu}
q^*>\frac{8r_2(a-1)(2a-1)B^2\times\displaystyle\mathop{\max}_{z\ge0}\{(z+S)^{3/2}z^2 e^{-\lambda^{**}_2 z}\}}{\displaystyle d_2\int_\bR J_2(y)y^2e^{\lambda^{**}_2 y}dy}.
\ee
\end{itemize}
It is straightforward to check that the functions $\uphi_i$ and $\ophi_i$ are continuous in $\R$, nonnegative, bounded, of class $C^1$ except at finitely many points, and that $\uphi_i\le\ophi_i$ in $\R$, for $i=1,2,3$.

\begin{lemma}\label{la:c2}
Under the above assumptions and notations, there exists a positive solution $\phitri$ of \eqref{TWS2} such that $\uphi_i\le\phi_i\le\ophi_i$ in $\bR$, $i=1,2,3$.
\end{lemma}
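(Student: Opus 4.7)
The plan is to verify that the functions defined in~\eqref{def3} form a pair of generalized upper and lower solutions in the sense of Definition~\ref{lus}, and then to invoke Lemma~\ref{luslem}. After translating so that~\eqref{a2} holds for some $\ep>0$ to be fixed small enough, the verification splits into the flat regions $z\le z_1$ (respectively $z\le z^*$) where each $\uphi_i$ vanishes and each $\ophi_i$ is constant, and the principal region $z>z_1$ (respectively $z>z^*$). The algebraic backbone consists of the identities $aw_p-1-u_p=0$, $w_p+bu_p=1$, $u_p(ab+1)=a-1$ and $-1-hu_p+aw_p=\beta_2$, together with $G_2(\lambda^{**}_2)=0$ and~\eqref{gg-d}; these are exactly what is needed for the linearization at $E_2$ to annihilate the critical-speed ansatz $Bze^{-\lambda^{**}_2 z}$, and they act uniformly across all three species thanks to the equal-diffusion/equal-kernel hypothesis $d_1=d_2=d_3$, $J_1=J_2=J_3$.

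For the upper-solution inequalities, the region $z<z_1$ is immediate since each $\bN_i[\ophi_i](z)\le0$ and the reaction brackets evaluated at $(a-1,a-1,1)$ are nonpositive. For $z>z_1$, computations analogous to~\eqref{N1ophi1}--\eqref{U1ophi1} reduce the nonlocal-plus-transport part of each $\mU_i$, via $G_2(\lambda^{**}_2)=0$ and~\eqref{gg-d}, to $-r_2\beta_2$ times the respective principal $ze^{-\lambda^{**}_2 z}$ term. Meanwhile the reaction bracket in~\eqref{u1} vanishes identically on $(z_1,z^*)$ (by $aw_p-1-u_p=0$ and $abu_p-A=0$) and becomes strictly negative for $z>z^*$ where the perturbation $q^*z^{1/2}e^{-\lambda^{**}_2 z}$ turns on. Analogous cancellations close~\eqref{u2} (using $hu_p+abu_p-(a-1)=(a-1)(h-1)/(ab+1)<0$) and~\eqref{u3} (using $1-2b(a-1)>0$ from~\eqref{c1}).

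For the lower-solution inequalities~\eqref{l1} and~\eqref{l3}, the crucial comparison step is $\uphi_1(z-y)\ge u_p(1-B(z-y)e^{-\lambda^{**}_2(z-y)})$ for $y\in[-S,S]$ and $z>z_1$, together with the analogue for $\uphi_3$. This is exactly where the width condition $z_1-z_2>S$ intervenes: it guarantees that whenever $z-y$ falls in the flat piece $(z_2,z_1]$ where $\uphi_1=\uphi_3=0$, the right-hand side is nonpositive, because $Bxe^{-\lambda^{**}_2 x}\ge1$ on $[z_2,z_1]$. After the critical-speed cancellations, $\mL_1$ reduces to $\{r_2\beta_2-r_1[1+k(a-1)]\}u_pBze^{-\lambda^{**}_2 z}$ plus nonnegative higher-order corrections, which is nonnegative by~\eqref{re2}. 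Similarly, $\mL_3(z)\ge w_pe^{-\lambda^{**}_2 z}[r_2\beta_2Bz-r_3\ep]$, which is nonnegative as soon as $\ep\le r_2\beta_2 Bz_1/r_3$; here the hypothesis $\rho\ge\lambda^{**}_2$ is what allows one to convert $e^{-\rho z}$ from~\eqref{a2} into $e^{-\lambda^{**}_2 z}$.

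The heart of the proof, and the main obstacle, is~\eqref{l2} at the critical speed, which I would handle in the spirit of Lemma~\ref{la:c1}. For $z>z^*$, the inequality $\uphi_2(x)\ge(a-1)Bxe^{-\lambda^{**}_2 x}-q^*x^{1/2}e^{-\lambda^{**}_2 x}$ (valid for all $x>0$ by the very definition $z^*=\{q^*/[(a-1)B]\}^2$) bounds $\bN_2[\uphi_2](z)$ from below. Combining with $s\uphi_2'(z)$ and with the leading reaction contribution $r_2\beta_2\uphi_2(z)$ from the bracket $-1-h\ophi_1-\uphi_2+a\uphi_3=\beta_2+O(Bze^{-\lambda^{**}_2 z})+q^*z^{1/2}e^{-\lambda^{**}_2 z}$, the $(a-1)Bze^{-\lambda^{**}_2 z}$ contributions cancel identically by $G_2(\lambda^{**}_2)=0$ and~\eqref{gg-d}, yielding
$$\mL_2(z)\ \ge\ q^*e^{-\lambda^{**}_2 z}\bigl[B_1(z)-r_2\beta_2 z^{1/2}\bigr]\,-\,C\,B^2z^2e^{-2\lambda^{**}_2 z},$$
where $B_1(z):=d_2z^{1/2}-d_2\int J_2(y)(z-y)^{1/2}e^{\lambda^{**}_2 y}dy+s\lambda^{**}_2 z^{1/2}-s/(2z^{1/2})$ and $C>0$ absorbs the quadratic reaction errors. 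Applying $G_2(\lambda^{**}_2)=0$ and~\eqref{gg-d} a second time collapses $B_1(z)-r_2\beta_2 z^{1/2}$ to $d_2\int J_2(y)\bigl[z^{1/2}-(z-y)^{1/2}-y/(2z^{1/2})\bigr]e^{\lambda^{**}_2 y}dy$, which by the pointwise Taylor inequality $z^{1/2}-(z-y)^{1/2}-y/(2z^{1/2})\ge y^2/[8(z+S)^{3/2}]$ on $[-S,S]$ is at least $d_2\int J_2(y)y^2e^{\lambda^{**}_2 y}dy\,/\,[8(z+S)^{3/2}]$. The lower bound on $q^*$ in~\eqref{qu} is calibrated so that this positive residual uniformly dominates $C\,B^2z^2e^{-2\lambda^{**}_2 z}$ on $(z^*,\infty)$, forcing $\mL_2\ge0$ there. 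The twofold exploitation of the double-root geometry $G_2(\lambda^{**}_2)=G'_2(\lambda^{**}_2)=0$ (the second encoded in~\eqref{gg-d}) is the key technical difficulty; Lemma~\ref{luslem} then delivers the desired positive solution.
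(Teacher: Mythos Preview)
Your proposal is correct and follows the paper's own proof: verify~\eqref{u1}--\eqref{l3} for the ansatz~\eqref{def3} via the critical-speed cancellations $G_2(\lambda^{**}_2)=0$ and~\eqref{gg-d} (applied uniformly across species thanks to $d_1=d_2=d_3$, $J_1=J_2=J_3$), handle $\mL_2$ with the $z^{1/2}$-Taylor trick and~\eqref{qu}, and then invoke Lemma~\ref{luslem}. One small misattribution: the condition $1-2b(a-1)>0$ from~\eqref{c1} is used not for $\mU_3$ (where the reaction bracket actually vanishes by $1-bu_p-w_p=0$ and $abu_p=A$, exactly as you noted for $\mU_1$) but is implicitly needed for your $\mL_3$ bound, since the coefficient of $Bze^{-\lambda^{**}_2 z}$ in $\alpha-b\ophi_1-b\ophi_2-\uphi_3$ is $-bA-b(a-1)+w_p=1-2b(a-1)$, and this must be nonnegative to reach $\mL_3(z)\ge w_pe^{-\lambda^{**}_2 z}[r_2\beta_2Bz-r_3\ep]$.
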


\begin{proof}
We first choose $\ep>0$ small enough such that
\be\label{ep-c2}
\ep\le\frac{r_2\beta_2 e}{r_3}.
\ee
Also, up to a translation, condition \eqref{a2} holds for this $\ep$.

Next, we verify \eqref{u1}-\eqref{l3} for the functions defined in \eqref{def3}, for all $z\in\bR$ except
finitely many points. As before, we only need to show~\eqref{u1}-\eqref{l3} for $z>z_1$ or $z>z^*$, respectively: indeed~\eqref{u1}-\eqref{l3} hold trivially for $z<z_1$ or $z<z^*$ respectively since $0\le\uphi_1\le\ophi_1\le a-1$, $0\le\uphi_2\le\ophi_2\le a-1$ and $0\le\uphi_3\le\ophi_3\le1$ in $\R$, together with the precise definitions~\eqref{def3}.

For $z>z_1$, $\ophi_1(z)=u_p+ABz\estar$ and
\be\label{inequphi1}
\ophi_1(z-y)\le u_p+AB(z-y)e^{-\lambda^{**}_2(z-y)}\ \hbox{ for all }y\in[-S,S]
\ee
(indeed, $z-y>z_1-S>z_2$ and if $z-y>z_1$, we have equality in the above inequality, whereas if $z_2<z-y\le z_1$, we have $e^{\lambda^{**}_2(z-y)}/(z-y)\le B$ and $\ophi_1(z-y)=a-1=u_p+A\le u_p+AB(z-y)e^{-\lambda^{**}_2(z-y)}$). Hence
$$\bN_1[\ophi_1](z)\le ABz\estar[I_1(\lambda^{**}_2)-1]-AB\estar\int_\bR J_1(y)ye^{\lambda^{**}_2 y}dy.$$
Also, using $\uphi_2\ge 0$ together with $-1-u_p+aw_p=0$ and $A=a-1-u_p=abu_p$, we have
$$-1-\ophi_1(z)-k\uphi_2(z)+a\ophi_3(z)\le-1-u_p-ABz\estar+aw_p+abu_pBz\estar=0.$$
This implies that, using $G_2(\lambda^{**}_2)=0$ and \eqref{gg-d} together with $d_1=d_2$ and $J_1=J_2$,
$$\baa{rcl}
\mU_1(z) & \le & \displaystyle ABz\estar\{d_1[I_1(\lambda^{**}_2)-1]-s\lambda^{**}_2\}-AB\estar\Big[d_1\int_\bR J_1(y)ye^{\lambda^{**}_2 y}dy-s\Big]\vspace{3pt}\\
& = & -ABr_2\beta_2z\estar<0\eaa$$
for all $z>z_1$. Thus,~\eqref{u1} holds for all $z\neq z_1$.

Similar calculations lead to
\beaa
\mU_2(z)&\le& (a-1)Bz\estar\{d_2[I_2(\lambda^{**}_2)-1]-s\lambda^{**}_2+r_2\beta_2\}\\
&&-(a-1)B\estar\Big[d_2\int_\bR ye^{-\lambda^{**}_2 y}J_2(y)dy-s\Big]\\
& & +[hu_p-(a-1)+abu_p]r_2(a-1)B^2z^2e^{-2\lambda^{**}_2z}<0
\eeaa
for all $z>z_1$, using $G_2(\lambda^{**}_2)=0$, \eqref{gg-d} and $hu_p-(a-1)+abu_p<(ab+1)u_p-(a-1)=0$. Therefore,~\eqref{u2} holds for all $z\neq z_1$.

As for~\eqref{u3}, since $\uphi_2\ge 0$ and $1-bu_p-w_p=0$, we get that
$$\alpha(z)-b\uphi_1(z)-b\uphi_2(z)-\ophi_3(z)\le 1-bu_p+bu_pBz\estar-w_p-bu_pBz\estar=0$$
for every $z>z_1$, and, together with $G_2(\lambda^{**}_2)=0$ and \eqref{gg-d}, it follows that
\beaa
\mU_3(z)&\le& bu_pBz\estar\{d_3[I_3(\lambda^{**}_2)-1]-s\lambda^{**}_2\}-bu_pB\estar\Big[d_3\int_\bR J_3(y)ye^{\lambda^{**}_2 y}dy-s\Big]\\
&=& -bu_pBr_2\beta_2z\estar<0,
\eeaa
since $J_3=J_2$ and $d_3=d_2$. As a consequence,~\eqref{u3} holds for all $z\neq z_1$.

For $\mL_1$ and~\eqref{l1}, one has, for every $z>z_1$, $\uphi_1(z)=u_p-u_pBz\estar$ and
$$\uphi_1(z-y)\ge u_p-u_pB(z-y)e^{-\lambda^{**}_2(z-y)}\ \hbox{ for all }y\in[-S,S]$$
with the same arguments as for the proof of~\eqref{inequphi1}, hence
$$\baa{rcl}
d_1\bN_1[\uphi_1](z)+s\uphi_1(z) & \ge & \displaystyle-u_pBz\estar\{d_1[I_1(\lambda^{**}_2)-1]-s\lambda^{**}_2\}\vspace{3pt}\\
& & \displaystyle+u_pB\estar\Big\{d_1\int_\bR J_1(y)ye^{\lambda^{**}_2 y}dy-s\Big\}\vspace{3pt}\\
&= & u_pBr_2\beta_2z\estar\eaa$$
due to $G_2(\lambda^{**}_2)=0$,~\eqref{gg-d}, $d_1=d_2$, and $J_1=J_2$. On the other hand, using $u_p-aw_p=-1$, there holds
\beaa
-1-\uphi_1(z)-k\ophi_2(z)+a\uphi_3(z)&=&(-1-u_p+aw_p)+Bz\estar[u_p-k(a-1)-aw_p]\\
&=& -[1+k(a-1)]Bz\estar
\eeaa
for all $z>z_1$, hence
\beaa
\mL_1(z)&\ge& u_pBr_2\beta_2z\estar-r_1(u_p-u_pBz\estar)[1+k(a-1)]Bz\estar\\
&\ge& u_pBz\estar\{r_2\beta_2-r_1[1+k(a-1)]\}\ge 0,
\eeaa
using $r_2\beta_2\ge r_1[1+k(a-1)]$ by~\eqref{re2}. Thus,~\eqref{l1} holds for all $z\neq z_1$.

To show~\eqref{l2} for $z>z^*\ (>z_1)$, we can proceed for instance as in~\eqref{N1uphi1bis} in the proof of Lemma~\ref{la:c1} to reach
\beaa
d_2\bN_2[\uphi_2](z)+s\uphi_2'(z)&\ge&(a-1)Bze^{-\lambda^{**}_2 z}\left\{d_2\left[\int_\bR J_2(y)e^{\lambda^{**}_2 y}dy-1\right]-s\lambda^{**}_2\right\}\\
&&-(a-1)Be^{-\lambda^{**}_2 z}\left\{ d_2\int_{\bR} J_2(y)ye^{\lambda^{**}_2 y}dy-s\right\}+q^*d_2e^{-\lambda^{**}_2 z}z^{1/2}\\
&&-q^*e^{-\lambda^{**}_2 z}\left\{ d_2\int_\bR J_2(y)(z-y)^{1/2} e^{\lambda^{**}_2 y}dy-s\lambda^{**}_2 z^{1/2}+\frac{s}{2z^{1/2}}\right\},
\eeaa
while
\beaa
&&r_{2}\uphi_2(z)[-1-h\ophi_1(z)-\uphi_2(z)+a\underline{\phi}_3(z)]\\
&=& r_2\uphi_2(z)\big[-1-hu_p-hABz\estar-\uphi_2(z)+aw_p-aw_pBz\estar\big]\\
&\ge&r_2\beta_2\uphi_2(z)-r_2\ophi_2(z)(hA+aw_p)Bz\estar-r_2\ophi_2^2(z)\\
&\ge&r_2\beta_2\uphi_2(z)-r_2(a-1)(2a-1)B^2z^2 e^{-2\lambda^{**}_2 z},
\eeaa
using $hA+aw_p+(a-1)<A+aw_p+(a-1)=2a-1$. It follows from $G_2(\lambda^{**}_2)=0$ and~\eqref{gg-d} that $\mL_2(z)\ge \estar[q^*A_5(z)-A_6(z)]$ for $z>z^*$, where
$$\left\{\baa{l}
\displaystyle A_5(z):=d_2\left[z^{1/2}-\int_\bR J_2(y)(z-y)^{1/2}e^{\lambda^{**}_2 y}dy\right]+s\lambda^{**}_2z^{1/2}-\frac{s}{2z^{1/2}}-r_2\beta_2z^{1/2},\vspace{3pt}\\
\displaystyle A_6(z):=r_2(a-1)(2a-1)B^2z^2 e^{-\lambda^{**}_2 z}.\eaa\right.$$
Then, using $G_2(\lambda^{**}_2)=0$ together with~\eqref{gg-d} and~\eqref{qu}, the same arguments as those in the proofs of Lemmas~\ref{la:c1} and~\ref{la:c12} give $\mL_2(z)\ge 0$ for all $z>z^*$. Therefore,~\eqref{l2} holds for all $z\neq z^*$.

Finally, let us consider $\mL_3$. For $z>z_1$, $\uphi_3(z)=w_p-w_pBz\estar$ and
$$\uphi_3(z-y)\ge w_p-w_pB(z-y)e^{-\lambda^{**}_2(z-y)}\ \hbox{ for all }y\in[-S,S],$$
as in the proof of~\eqref{inequphi1} for instance. Then, using $d_3=d_2$ and $J_3=J_2$, together with $G_2(\lambda^{**}_2)=0$ and~\eqref{gg-d}, one infers that
\beaa
d_3\bN_3[\uphi_3](z)\!+\!s\uphi_3'(z)&\!\!\!\!\!\ge\!\!\!\!\!& -w_pBz\estar\{d_3[I_3(\lambda^{**}_2)\!-\!1]\!-\!s\lambda^{**}_2\}\!+\!w_pB\estar\!\Big\{\!d_3\!\!\int_\bR\!\!\! J_3(y)ye^{\lambda^{**}_2 y}dy\!-\!s\!\Big\}\\
&\!\!\!\!\!=\!\!\!\!\!& \ w_pBr_2\beta_2z\estar
\eeaa
for all $z>z_1$. On the other hand, using \eqref{a2}, $bu_p+w_p=1$ and $2b(a-1)<1$, we obtain
\beaa
&&\alpha(z)-b\ophi_1(z)-b\ophi_2(z)-\uphi_3(z)\\
&\ge& 1-\ep e^{-\rho z}-bu_p-bABz\estar-b(a-1)Bz\estar-w_p+w_pBz\estar\ \ge\ -\ep e^{-\rho z}
\eeaa
for all $z>z_1$, whence
$$\mL_3(z)\ge w_pBr_2\beta_2z\estar-r_3\uphi_3(z)\ep e^{-\rho z}\ge w_p\estar(Br_2\beta_2z_1-r_3\ep)\ge 0,$$
using $\rho\ge\lambda^{**}_2$ and $Br_2\beta_2z_1\ge\lambda^{**}_2er_2\beta_2/\lambda^{**}_2=r_2\beta_2 e\ge r_3\ep$ (because $z_1>1/\lambda^{**}_2$, $B>\lambda^{**}_2 e$ and because of~\eqref{ep-c2}). As a consequence,~\eqref{l3} holds for all $z\neq z_1$.

Since $0\le\uphi_i\le\ophi_i$ in~$\bR$ for $i=1,2,3$, and each function $\uphi_i$ is nontrivial, the conclusion of Lemma~\ref{la:c2} follows from Lemma~\ref{luslem}, as at the end of the proof of Lemma~\ref{la:1p}.
\end{proof}

Clearly, the solution $\phitri$ given in Lemma~\ref{la:c2} satisfies $\phitri(+\infty)=(u_p,0,w_p)$. Together with Proposition~\ref{prop:plus}, Theorem~\ref{th:c2} follows.


%
%
%
%
%
%
%


\end{document}